\setlist[enumerate,1]{label=\arabic*),font=\normalfont}
\newtheorem{theorem}{Theorem}[section]
\newtheorem{proposition}[theorem]{Proposition}
\newtheorem{lemma}[theorem]{Lemma}
\newtheorem{corollary}[theorem]{Corollary}
\newtheorem{definition}[theorem]{Definition}
\numberwithin{equation}{section}
\theoremstyle{definition}
\newtheorem{remark}[theorem]{Remark}
\def\dom{\operatorname{dom}}
\def\ran{\operatorname{ran}}
\def\domf{\operatorname{D}}
\def\ranf{\operatorname{R}}
\def\sH{{\mathfrak H}}
\def\sN{{\mathfrak N}}
\def\sL{{\mathfrak L}}
\def\st{{\mathfrak t}}
\def\sT{{\mathfrak T}}
\def\RE{{\rm Re\,}}
\def\IM{{\rm Im\,}}
\def\Ker{{\rm Ker\,}}
\def\cD{{\mathcal D}}
\def\cF{{\mathcal F}}
\def\cG{{\mathcal G}}
\def\cH{{\mathcal H}}
\def\cL{{\mathcal L}}
\def\cQ{{\mathcal Q}}
\def\cP{{\mathcal P}}
\def\bL{{\mathbf L}}
\def\dR{{\mathbb R}}
\def\dC{{\mathbb C}}
\def\eS{\tilde{S}}
\def\eA{\tilde{A}}
\def\wt{\widetilde}
\newcommand{\bm}[1]{{\mathbf{#1}}}
\newcommand{\map}[3]{#1\colon\, #2\to #3}
\newcommand{\angles}[1]{\langle#1\rangle}
\newcommand\Rtwo{\mathbb{R}^2}
\newcommand\Ltwo{{L_2(\Rtwo)}}
\newcommand\intRtwo{\int_{\Rtwo}}
\def\uphar{{\upharpoonright\,}}
\newcommand\sRlim{\operatorname*{s-R-lim}}
\newcommand\kker{\operatorname{\mathbf{ker}}}
\newcommand\kkei{\operatorname{\mathbf{kei}}}
\begin{document}

\title[$M$-sectorial extensions]{On $M$-sectorial extensions\\ of sectorial operators}

\author{Yu. M. Arlinski\u\i}
\address{Department of Mathematical Analysis \\
East Ukrainian National Dahl University \\
prospect Sovetskii, 59-a \\
Severodonetsk, 93400 \\
Ukraine} \email{yury.arlinskii@gmail.com}

\author{ A. B. Popov}
\address{Department of Mathematical Analysis \\
East Ukrainian National Dahl University \\
prospect Sovetskii, 59-a \\
Severodonetsk, 93400 \\
Ukraine} \email{Andrey.B.Popov@gmail.com}

\subjclass[2010]{47A06, 47A07, 47A20, 47B25, 47B44, 82B23}

 \keywords{Sectorial
operator, accretive operator, Friedrichs extension, Kre\u{\i}n -von
Neumann extension}

\date{}

\begin{abstract}
In our article~\cite{ArlPopMAccExt} description in terms of abstract
boundary conditions of all $m$-accretive extensions and their resolvents of a closed densely defined sectorial operator $S$ have been obtained. In particular, if
$\{\cH,\Gamma\}$ is a boundary pair of $S$, then there is a
bijective correspondence between all $m$-accretive extensions
$\eS$ of $S$ and all pairs $\angles{\bm{Z},X}$, where $\bm{Z}$ is
a $m$-accretive linear relation in $\cH$ and
$X:\dom(\bm{Z})\to\overline{\ran(S_{F})}$ is a linear operator such
that:
\[
\|Xe\|^2\leqslant\RE(\bm{Z}(e),e)_{\cH}\quad\forall e\in\dom(\bm{Z}).
\]
As is well known the operator $S$ admits at least one $m$-sectorial
extension, the Friedrichs extension. In this paper, assuming that
$S$ has non-unique $m$-sectorial extension, we established
additional conditions on a pair $\angles{\bm{Z}, X}$
 guaranteeing that corresponding $\eS$ is $m$-sectorial extension of $S$. As an application, all $m$-sectorial extensions of a nonnegative
symmetric operator in a planar model of two point interactions
are described.
\end{abstract}

\maketitle

\thispagestyle{empty}
\tableofcontents

\section*{Introduction}

Let $\sH$ be a complex Hilbert space with the inner product $(\cdot,\cdot)$. We use the symbols $\dom(T)$, $\ran(T)$, $\ker(T)$ for the
domain, the range, and the null-subspace of a linear operator $T$. The resolvent set of a linear operator $T$ is denoted by $\rho(T)$. The linear space of bounded operators acting
between Hilbert spaces $\sH_1$ and $\sH_2$ is denoted by $\bL(\sH_1,\sH_2)$
and the Banach algebra $\bL(\sH,\sH)$ by $\bL(\sH)$.
A linear operator $T$ in a complex Hilbert space $\sH$ is called
\textit{accretive} if its
 numerical range
\begin{equation*}
W(T)\stackrel{def}{=}\left\{(Tu,u),u\in\dom(T),\|u\|=1\right\}
\end{equation*}
is contained in the closed right half-plane, i.e.,
\[
\RE (Tu,u)\geqslant 0\;\mbox{for all} \;u\in\dom(T).
\]
An accretive operator $T$ is called \emph{maximal accretive} or
\emph{$m$-accretive}, if one of the following equivalent conditions
holds~\cite{Kato,Phillips,Ph3}:
\begin{enumerate}
\item $T$ is closed and has no accretive extensions in $\sH$;
\item resolvent set $\rho(T)$ contains a point from an open left half-plane;
\item $T$ is a closed densely defined operator and its adjoint $T^*$ is an accretive
operator;
\item the operator $-T$ generates one-parameter contractive semigroup $U(t)=\exp(-tT)$, $t\ge0$.
\end{enumerate}
One can prove the following equality for an arbitrary $m$-accretive
operator~$T$:
\begin{equation}\label{eqn:kerT}
\ker(T)=\ker(T^*).
\end{equation}

Let $\alpha \in [0,\pi/2)$. Denote by the $\Theta(\alpha)$ the sector in the complex plane
\[
\Theta \left( \alpha \right)\stackrel{def}{=}\left\{ z\in \mathbb{C}:\left| {\arg z}
\right|\leqslant \alpha\right\}.
\]

A linear operator $S$ is called \emph{sectorial} with
the vertex at the origin and the semi-angle $\alpha$~\cite{Kato} if
$
W(S)\subseteq \Theta \left( \alpha \right).
$
Clearly, $S$ is sectorial if and only if:
\[
\left| {\IM\left( {Sx,x} \right)} \right|\leqslant \tan\alpha\, \RE\left( {Sx,x} \right),
\]
for all $x\in \dom(S)$. In particular, if $\alpha =0$, then
$(Sx,x)\ge 0$ for all $x\in\dom(S)$, i.e., $S$ is symmetric and
nonnegative operator. \textit{In the sequel we will use the word
``sectorial'' only for sectorial operators and sectorial sesquilinear
forms with vertex at the origin. In addition, if semi-angle of
sectorial operator $S$ is $\alpha$ we will call $S$
$\alpha$-sectorial operator}. A linear operator $S$ is called
\emph{$m$-sectorial}, if it is sectorial and $m$-accretive. If $T$
is $m$-$\alpha$-sectorial operator and if $\gamma\in(\alpha,\pi/2)$
then
\begin{equation}
\label{estimres} \lambda\in\dC\setminus\Theta(\gamma)\Rightarrow
||(T-\lambda I)^{-1}||\le\cfrac{1}{|\lambda|\,\sin(\gamma-\alpha)},
\end{equation}
and the one-parameter semigroup $U(t)=\exp(-tT)$, $t\ge 0$, admits a
holomorphic contractive continuation into the interior of the sector
${\Theta(\pi/2-\alpha)}$ \cite{Kato}.

 It is well-known that there is a one-to-one correspondence
between closed densely defined sectorial forms and $m$-sectorial
operators. This correspondence is given by the First and the Second
Representations Theorems~\cite{Kato}. We will denote by $T[u,v]$ the
closed form associated with $m$-sectorial extension $T$ and by
$\cD[T]$ its domain.

In the present paper we continue to study $m$-accretive extensions of a densely defined closed sectorial operator $S$. It is well known~\cite{Kato}, that $S$ admits at least
one $m$-sectorial extension $S_F$, the Friedrichs extension,
which is associated with the closure of sesquilinear form
$(Sf,g)$, $f,g\in\dom(S)$.
 In~\cite{ArlMaxSectExt,ExtExtRel,MAccExtRest,Arl12,ArlAbsBounCond,LMS2012}, the
boundary triplets methods have been applied for a description of all
$m$-accretive, $m$-sectorial extensions, and their resolvents for sectorial operators $S$
satisfying condition
\begin{equation}\label{cond5.1}
\dom(S^*)\subseteq\domf[S_N],
\end{equation}
where $S_N$ is ``extremal'' $m$-sectorial extension of $S$, called the Kre\u\i
n-von Neumann extension~\cite{ArlMaxSectExt, ExtExtRel}. Such extension is
an analog of the ``soft'' (``the Kre\u\i n'', ``the
Kre\u\i n-von Neumann'') \textit{nonnegative} selfadjoint extension of a
nonnegative symmetric operator, discovered by M.G.~Kre\u\i n in
\cite{Kr1, Kr2}. Recall that $S$ is called nonnegative if $(Sf,f)\ge 0$ for all $f\in\dom(S)$. Observe that condition \eqref{cond5.1} holds true if for sectorial $S$ the equality $\dom(S^*_F)+\dom(S^*_N)=\dom(S^*)$ is satisfied. The  latter occurs, for instance, if $S$ is coercive, i.e., $\RE(Sf,f)\ge m ||f||^2$ for all $f\in \dom(S)$, where $m>0$.

 In our recent paper~\cite{ArlPopMAccExt} in the general case of an \textit{arbitrary} closed densely
defined sectorial operator $S$ we propose a new approach for the problem of parametrization of all $m$-accretive extensions. Our methods is applicable, in particular, for sectorial operator $S$, having a unique $m$-sectorial extension ($S_F=S_N$). In this paper, assuming $S_F\ne S_N$, we apply our method for a description of all $m$-sectorial extensions.

Let $A$ be a densely defined closed symmetric operator in $\sH$. Extensions $\tilde A$ of
$A$ possessing property
\[
A\subset \tilde A \subset A^*
\]
are called \textit{quasi-selfadjoint} (\textit{proper,
intermediate}) extensions of $S$. The problem of existence and
description of all quasi-selfadjoint m-accretive extensions of a
nonnegative symmetric operator via linear-fractional transformation
has been solved in~\cite{ArTs2} and via abstract boundary conditions
in~\cite{Mikh, Koch, Arl1988, DMTs2, DM2}. We refer on this matter
to the survey~\cite{ArTs2009} where one can find information about
various approaches to the extension problem of nonnegative symmetric
operators. Notice that in~\cite{CAOT2012}, developing the method proposed in~\cite{ArTs6}, an intrinsic
parametrization of domains of all m-accretive
and m-sectorial quasi-selfadjoint extensions of nonnegative $A$ have
been obtained.

In the present paper we use the approach of
\cite{ArlPopMAccExt} for such extensions. Applications to nonnegative symmetric operator in a planar model of
two-centers point interactions are given. In one-center point interaction planar model the corresponding nonnegative symmetric operator admits a unique nonnegative selfadjoint extension \cite{Adamyan}, \cite{GKMT}, hence, the Friedrichs extension is unique among all quasi-selfadjoint $m$-accretive extensions \cite{Ts1980} and  all $m$-sectorial extensions \cite{ArlMaxSectExt}. In our paper~\cite{ArlPopMAccExt} we described all $m$-accretive extensions for this case. In the case of two and more centers, the Friedrichs extension is a non-unique element of the set of all nonnegative selfadjoint extensions,  therefore, there are non-selfadjoint $m$-accretive quasi-selfadjoint extensions and $m$-sectorial extensions.

\section{Preliminaries}

\subsection{Sectorial forms and operators}
The basic definitions and results concerning ses\-qui\-linear forms can
be found in~\cite{Kato}. If $\tau$ is a closed densely defined
sectorial form in the Hilbert space $\sH$, then by the First
Representation Theorem~\cite{Kr1,Kato}, there exists a unique
$m$-sectorial operator $T$ in $\sH$ associated with $\tau$ in the
following sense: $(Tu,v)=\tau[u,v],$
for all $u\in\dom(T)$ and for all $v\in\dom(\tau)$. The adjoint
operator $T^*$ is associated with the adjoint form
$\tau^*[u,v]:=\overline{\tau[v,u]}$. The nonnegative selfadjoint
operator $T_R$ associated with the real part
$\tau_R[u,v]:=\left(\tau[u,v]+\tau^*[u,v]\right)/2$ of the form
$\tau$ and is called the \textit{real part} of $T$. According to the
Second Representation Theorem~\cite{Kato} the equality
$
\dom(\tau)=\dom(T^{1/2}_R).
$ holds.
Moreover,
\[
\tau[u,v]=((I+iG)T^{\frac{1}{2}}_Ru,T^{\frac{1}{2}}_Rv),\;u,v\in\dom(\tau),
\]
where $G$ is a bounded selfadjoint operator in the subspace
$\overline{\ran(T_R)}$ and $||G||\le \tan\alpha$ iff $\tau$ is
$\alpha$-sectorial. It follows that
\begin{equation}
\label{viraz}
\begin{aligned}
\dom(T)&=\{u\in\dom (\tau):(I+iG)T^{1/2}_Ru\in\dom(\tau)\},\\
Tu&=T^{1/2}_R(I+iG)T^{1/2}_Ru,\; u\in\dom (T).
\end{aligned}
\end{equation}

In the sequel we will use the following notations for a $m$-sectorial operator~$T$:
\[
\domf[T]\stackrel{def}{=}\dom(T^{1/2}_R),\;\ranf[T]\stackrel{def}{=}\ran (T^{1/2}_R).
\]

Also, for a $m$-sectorial operator $T$ we denote by
\[
\hat T=T\uphar\overline{\ran(T)},\quad
\hat T_R=T_R\uphar\overline{\ran(T)}.
\]
Equality~\eqref{eqn:kerT} yields that $\ker(\hat T)=\ker(\hat T_R)=\{0\}$. 
From~\eqref{viraz} it follows for $\lambda=-a+ib$, $a,b\in\dR$,
$a>0$ (see~\cite{Kato1961,ExtExtRel})
\[
\begin{gathered}
(T-\lambda I)^{-1}=(T_R+aI)^{-1/2}(I+iG(\lambda))^{-1}(T_R+aI)^{-1/2}, \\
G(\lambda)=T^{1/2}_R(T_R+aI)^{-1/2}GT^{1/2}_R(T_R+aI)^{-1/2}-b(T_R+aI)^{-1}.
\end{gathered}
\]
The latter equalities imply the following statement.
\begin{proposition}[\cite{ExtExtRel}]
If $T=T_R^{1/2}(I+iG)T_R^{1/2}$ is a $m$-$\alpha$-sectorial operator
in the Hilbert space $\sH$, and $\gamma\in(\alpha,\pi/2)$, then
\begin{multline}\label{eqn:RFormDef}
\ranf[T] = \left\{f\in \sH: \sup_{x\in\dom(T)}\frac{|(f,x)|^2}{\RE(Tx,x)}<\infty\right\}=\\
=\left\{f\in \sH: \lim_{\substack{\overline{\lambda\to
0},\\\lambda\in\dC\setminus\Theta(\alpha)}} \left|\left((T-\lambda
I)^{-1}f,f\right)\right|<\infty\right\};
\end{multline}
\begin{multline}\label{eq:RFSlim3}
\lim\limits_{\substack{\lambda\to 0,\\\lambda\in\dC\setminus\Theta(\gamma)}} \left((T-\lambda I)^{-1}f,g\right)= \hat T^{-1}[f,g]\\
=((I+iG)^{-1}\hat T^{-1/2}_Rf,\hat T^{-1/2}_Rg),\quad f,g\in\ranf[T];
\end{multline}
\begin{align}
\lim\limits_{\substack{\lambda\to
0,\\\lambda\in\dC\setminus\Theta(\gamma)}} T_R^{1/2}(T-\lambda
I)^{-1}T_R^{1/2}g=(I+iG)^{-1}g;\; g\in\domf[T]\ominus\ker(T).
\end{align}
\end{proposition}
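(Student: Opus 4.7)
The plan is to reduce everything to the factorization
\[
(T-\lambda I)^{-1}=(T_R+aI)^{-1/2}(I+iG(\lambda))^{-1}(T_R+aI)^{-1/2},\qquad\lambda=-a+ib,\ a>0,
\]
recalled just above the statement. The key ingredients are (i) $G(\lambda)$ is bounded selfadjoint on $\overline{\ran(T_R)}$, so that $\|(I+iG(\lambda))^{-1}\|\le 1$; on the negative real axis $\lambda=-a$ one has in addition $\|G(-a)\|\le\tan\alpha$, hence $\RE\bigl((I+iG(-a))^{-1}h,h\bigr)\ge\cos^{2}\alpha\,\|h\|^{2}$; and (ii) by the spectral theorem for $T_R\ge 0$, $T_R^{1/2}(T_R+aI)^{-1/2}\to P_{\overline{\ran(T_R)}}$ strongly as $a\to 0^{+}$. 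All strong-operator computations are done first along $\lambda=-a$, then transported to $\dC\setminus\Theta(\gamma)$ using analyticity of the resolvent and the uniform bound \eqref{estimres}.

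For the first equality in \eqref{eqn:RFormDef} I use $\RE(Tx,x)=\|T_R^{1/2}x\|^{2}$ for $x\in\dom(T)$, so the supremum condition is exactly $|(f,x)|\le C\|T_R^{1/2}x\|$ on $\dom(T)$. By Riesz there is $h\in\sH$ with $(f,x)=(h,T_R^{1/2}x)$ for $x\in\dom(T)$, and since $\dom(T)$ is a core for $T_R^{1/2}$ (first representation theorem) this says $h\in\dom(T_R^{1/2})$ with $T_R^{1/2}h=f$, i.e.\ $f\in\ran(T_R^{1/2})=\ranf[T]$; the converse is Cauchy--Schwarz. For the second equality set $\phi_{a}:=(T_R+aI)^{-1/2}f$ and write $((T+aI)^{-1}f,f)=((I+iG(-a))^{-1}\phi_{a},\phi_{a})$; the two-sided bound in (i) then reduces boundedness of $|((T+aI)^{-1}f,f)|$ as $a\to 0^{+}$ to boundedness of $\|\phi_{a}\|^{2}=\int_{0}^{\infty}(t+a)^{-1}\,d\|E(t)f\|^{2}$, which by monotone convergence is equivalent to $f\in\ran(T_R^{1/2})$. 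The extension from the ray to $\dC\setminus\Theta(\alpha)$ follows from analyticity of the resolvent and \eqref{estimres}.

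The two limit formulas are obtained by combining strong convergence of each factor. For $f,g\in\ranf[T]$ I write $f=T_R^{1/2}f'$, $g=T_R^{1/2}g'$ with $f'=\hat T_R^{-1/2}f,\ g'=\hat T_R^{-1/2}g\in\overline{\ran(T_R)}$. Then $(T_R+aI)^{-1/2}f=T_R^{1/2}(T_R+aI)^{-1/2}f'\to f'$ strongly by (ii), and analogously for $g$. On $\overline{\ran(T_R)}$ the operator $G(-a)=T_R^{1/2}(T_R+aI)^{-1/2}GT_R^{1/2}(T_R+aI)^{-1/2}$ converges strongly to $PGP=G$ by (ii), and combined with $\|(I+iG(-a))^{-1}\|\le 1$ this yields $(I+iG(-a))^{-1}\to(I+iG)^{-1}$ strongly on $\overline{\ran(T_R)}$. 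Inserting produces \eqref{eq:RFSlim3} along $\lambda=-a\to 0^{+}$, after which Vitali's theorem applied to the analytic function $\lambda\mapsto((T-\lambda I)^{-1}f,g)$ on $\dC\setminus\Theta(\gamma)$ transports the limit to the full sector; the required normal family bound comes from \eqref{estimres} together with the uniform left-half-plane estimate $|((T-\lambda I)^{-1}f,g)|\le\|f'\|\,\|g'\|$ produced by the factorization. The third formula is obtained in the same way from
\[
T_R^{1/2}(T-\lambda I)^{-1}T_R^{1/2}g=T_R^{1/2}(T_R+aI)^{-1/2}\,(I+iG(\lambda))^{-1}\,T_R^{1/2}(T_R+aI)^{-1/2}g,
\]
using that $(I+iG)^{-1}$ preserves $\overline{\ran(T_R)}$ so that the outer factor acts as the identity on the limit.

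The main technical nuisance I anticipate is the off-axis correction $-b(T_R+aI)^{-1}$ inside $G(\lambda)$: since $|b|/a$ is unbounded on the full left half-plane, strong convergence $G(\lambda)\to G$ fails for arbitrary $\lambda\to 0$ there, and a direct operator-theoretic argument on the full sector is not available. This is exactly why I do the explicit strong-operator computation only on the ray $\lambda=-a$ and then pass to $\dC\setminus\Theta(\gamma)$ via the Vitali argument above, with the two uniform bounds providing the normal family needed to conclude.
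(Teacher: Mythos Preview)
The paper does not actually prove this proposition: it is quoted from \cite{ExtExtRel}, and the only hint given is the sentence ``The latter equalities imply the following statement,'' referring to the resolvent factorization displayed just before. Your proposal follows precisely this route, so in spirit it matches the paper's (implicit) argument.

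Your treatment of the first equality in \eqref{eqn:RFormDef} and of the negative real axis is correct. There is, however, a genuine gap in the passage from the ray $\lambda=-a$ to the full regions $\dC\setminus\Theta(\alpha)$ and $\dC\setminus\Theta(\gamma)$. These regions contain points with $\RE\lambda\ge 0$ (namely $\gamma<|\arg\lambda|\le\pi/2$), where the factorization $(T-\lambda I)^{-1}=(T_R+aI)^{-1/2}(I+iG(\lambda))^{-1}(T_R+aI)^{-1/2}$ is not available. Your appeal to \eqref{estimres} does not help here: that estimate gives $\|(T-\lambda I)^{-1}\|\le C/|\lambda|$, which blows up as $\lambda\to 0$ and therefore provides neither the $\limsup$ bound needed for \eqref{eqn:RFormDef} nor the normal-family bound your Vitali step requires. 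Without a uniform bound on this strip the Vitali argument (which, to work, should be applied to the rescaled family $F_t(\mu)=((T-t\mu I)^{-1}f,g)$ rather than to a single function) is incomplete.

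A clean repair is to rotate: for $|\psi|<\pi/2-\alpha$ the operator $e^{-i\psi}T=T_R^{1/2}\bigl[(\cos\psi+G\sin\psi)+i(G\cos\psi-\sin\psi)\bigr]T_R^{1/2}$ is again $m$-sectorial with real part comparable to $T_R$, so the same factorization applies to it and yields $|((T-\lambda I)^{-1}f,g)|\le C_\psi\|\hat T_R^{-1/2}f\|\,\|\hat T_R^{-1/2}g\|$ for all $\lambda$ with $\RE(e^{-i\psi}\lambda)<0$. Choosing $\psi=\pm\bigl(\tfrac{\alpha+\gamma}{2}-\tfrac{\pi}{2}\bigr)$ covers all of $\dC\setminus\Theta(\gamma)$ by two rotated half-planes, supplying the missing uniform bound; after that your Vitali/strong-convergence argument goes through.
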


\subsection{The Friedrichs and Kre\u\i n-von Neumann m-sectorial extensions}

Let $S$ be an $\alpha$-sectorial operator. It is well
known~\cite{Kato}, that the form $(Su,v)$, $u,v\in\dom(S)$ is
closable. We will denote by $S[u,v]$ its closure. The
domain of the form $S[u,v]$ is denoted by $\domf[S]$. With the
closed form $S[u,v]$ is associated the maximal $\alpha$-sectorial
operator $S_F$, which is called the \emph{Friedrichs extension} of
$S$~\cite{Kato}. So $\domf[S]=\domf[S_F]$ and $S_F[u,v]=S[u,v]$ for
all $u,v\in\domf[S]$. Let $S_{FR}$ be the real part of $S_F$. Clearly,
$\domf[S]=\dom(S^{1/2}_{FR})$. We will use the representations
\begin{align*}
S[u,v]&=S_F[u,v]=((I+iG_F)S^{1/2}_{FR}u,S^{1/2}_{FR}v),\; u,v\in\domf[S]=\dom (S^{1/2}_{FR}),\\
S_Ff&=S^{1/2}_{FR}(I+iG_F)S^{1/2}_{FR}f,\; f\in\dom(S_F),\\
S^*_Fg&=S^{1/2}_{FR}(I-iG_F)S^{1/2}_{FR}g,\; g\in\dom(S^*_F).
\end{align*}

It follows from the definition of the closure of the form $(Su,v)$, that
\begin{equation}\label{eqn7}\ranf[S_F]= \ran(S_{FR}^{1/2}) = \left\{f\in
\sH:
\sup_{\varphi\in\dom(S)}\frac{|(f,\varphi)|^2}{\RE(S\varphi,\varphi)}<\infty\right\}.
\end{equation}
In the case of a nonnegative symmetric operator $S$ ($\alpha=0$) it
was discovered by M.G. Kre\u{\i}n~\cite{Kr1} that the set of all its
nonnegative selfadjoint extensions has a minimal element (in the
sense of associated closed quadratic forms). This minimal element
$S_N$ is defined in~\cite{Kr1} by means of linear--fractional
transformation. Another (equivalent) definitions of $S_N$ are given
in~\cite{AndoNishio} and in~\cite{CS}. If $\alpha\ne 0$, then the
corresponding m-sectorial analog of such extremal extension also
exists~\cite{ArlMaxSectExt,ExtExtRel} and can be defined similarly, see~\cite{ArlMaxSectExt,ExtExtRel,LMS2012}.
We preserve the same notation $S_N$ and the name
\textit{Kre\u{\i}n-von Neumann extension} in the general case of non
necessarily symmetric sectorial operator $S$.
We notice that interesting applications of Kre\u{\i}n-von Neumann extension of nonnegative symmetric operator can be found in \cite{AGMShT}.

The domain of closed sesquilinear form associated with
Kre\u{\i}n-von Neumann extension of $\alpha$-sectorial operator $S$
$S$ is given by (see~\cite{ExtExtRel})
\begin{equation}\label{eqn8}
\domf[S_N] = \left\{u\in\sH:
\sup_{\varphi\in\dom(S)}\frac{|(u,S\varphi)|^2}{\RE(S\varphi,\varphi)}<\infty\right\}.
\end{equation}
This is an analog of the formula established by T.~Ando and
K.~Nishio in~\cite{AndoNishio} for the case of nonnegative symmetric
operator $S$ ($\alpha=0$).

Let
\[
\sN_\lambda\stackrel{def}{=}\sH\ominus\ran(S-\bar{\lambda}I)
\]
be the defect subspace of a linear operator $S$. If $S$ closed and
densely defined, then
 \[\sN_\lambda=\ker(S^*-\lambda I).\]
It is easy to see, that if $\eS$ is an extension of $S$ with nonempty
resolvent set, then for all $\lambda,z\in\rho(\eS^*)$
\begin{gather}
(\eS^*-\lambda
I)(\eS^*-zI)^{-1}\sN_\lambda=(I+(z-\lambda)(\eS^*-zI)^{-1})\sN_\lambda=\sN_z.
\end{gather}

Note, that from~\eqref{eqn7} and~\eqref{eqn8} it follows that
\begin{equation}
\label{raven11} \domf[S_N]\cap
\sN_\lambda=\ranf[S_F]\cap\sN_\lambda.
\end{equation}
For the operators $S_F$, $S_N$, and for an arbitrary $m$-sectorial
extension $\eS$ of $S$ the following relations are valid
\cite{ArlMaxSectExt, ExtExtRel}:
\[
\domf[S]\cap\sN_\lambda=\{0\},
\]
\begin{equation}
\label{eq:DSN}
\domf[S_N]=\domf[S]\dot+\left(\sN_\lambda\cap\domf[S_N]\right),\;\lambda\in\rho(S^*_F).
\end{equation}
\begin{equation}\label{eq:formseq}
\domf[S]\subseteq\domf[\eS]\subseteq\domf[S_N],\quad \ranf[S_N]\subseteq\ranf[\eS]\subseteq\ranf[S_F],
\end{equation}
\begin{gather}
\eS[f,v]=S_N[f,v]\quad \forall f\in\domf[S],v\in\domf[\eS],\\
S_N[f,v]=(f,S^*v)\quad \forall f\in\domf[S],v\in\dom(S^*)\cap\domf[S_N],\\
\dom(S^*_F)=\domf[S]\cap\dom(S^*).
\end{gather}
If $S$ is coercive, then
\[
\begin{array}{l}
\dom(S_N)=\dom(S)\dot+\ker(S^*),\; S_N\uphar\ker(S^*)=0,\\
\domf[S_N]=\domf[S]\dot+\ker(S^*).
\end{array}
\]
The operator $S$ has a unique $m$-sectorial extension if and only
if, for some $\lambda \in \rho({S_F^*})$ (then for all $\lambda \in
\rho({S_F^*}))$:
\begin{equation}
\label{uniq}
 \sup\limits_{x\in\dom(S)}\frac{|(f_\lambda,x)|^2}
 {\RE(Sx,x)}=\infty
\quad\forall f_\lambda\in\sN_\lambda\setminus\{0\}.
\end{equation}

From~\eqref{eqn7},~\eqref{eqn8}, and
~\eqref{uniq} 
it follows that
\begin{multline}
\label{eq:SectCond1} S_N\ne S_F\iff
\domf[S_N]\cap\sN_\lambda\neq\{0\}\\
\iff
\ranf[S_F]\cap\sN_\lambda\ne\{0\},\;\lambda\in\rho(S_F^*).
\end{multline}

Taking into account~\eqref{eq:SectCond1},~\eqref{eqn:RFormDef},
\eqref{eq:RFSlim3} we get for $\mu\in\dC\setminus\Theta(\alpha)$
\begin{equation}
\label{defmu} \varphi_\mu\in\sN_\mu\cap \domf[S_N]\iff
\lim_{\substack{\overline{\lambda\to
0},\\\lambda\in\dC\setminus\Theta(\alpha)}}
\left|\left((S^*_F-\lambda
I)^{-1}\varphi_\mu,\varphi_\mu\right)\right|<\infty,
\end{equation}
and for $\gamma\in(\alpha,\pi/2)$
\begin{multline*}
\lim_{\substack{\lambda\to
0,\\\lambda\in\dC\setminus\Theta(\gamma)}} \left((S^*_F-\lambda
I)^{-1}\varphi_\mu,\psi_\mu\right)=( (I-iG_F)^{-1}\hat
S^{-1/2}_{FR}\varphi_\mu,\hat S^{-1/2}_{FR}\psi_\mu),\\
 \varphi_\mu,\psi_\mu\in\sN_\mu\cap\domf[S_N].
\end{multline*}
 Fix $z\in\rho(S_F^*)$ and
 define a linear manifold $\sL$:
\begin{equation}
\label{linl} \sL\stackrel{def}{=}\domf[S]\dotplus\sN_z,\quad
z\in\rho(S_F^*).
\end{equation}
Then $\sL$ does not depend on the choice of $z\in\rho(S_F^*)$~\cite{ArlPopMAccExt} and, clearly, $\dom(S^*)\subset\sL$.
We will denote by $\cP_{z, F}$ and $\cP_z$ the skew projectors in $\sL$ onto
$\domf[S]$ and $\sN_z,$ corresponding to the decomposition
\eqref{linl}. If $z=i$, these projectors we will denote by $\cP_F$
and $\cP_i$, respectively.

Let us consider the form $\hat S_z[u,v]$ defined on the linear
manifold $\sL$
\[
\hat S_z[u,v]=S[\cP_{\bar{z},F}u,\cP_{\bar{z},F}v]-z(\cP_{\bar{z},F}u,\cP_{\bar{z},F}v),
\quad \forall z\in\dC\backslash\Theta(\alpha).
\]
The following relations have been established in~\cite{ExtExtRel}:
\begin{align*}
\domf[S_N]&=\left\{u\in\sL:\lim_{\substack{\overline{z\rightarrow
0}\\z\in\dC\backslash\Theta(\alpha)}}
\left|\hat S_z[u]\right |<\infty\right\},\\
S_N[u,v]&=\lim_{\substack{z\rightarrow
0\\z\in\dC\backslash\Theta(\gamma)}}\hat S_z[u,v],\qquad
u,v\in\domf[S_N],\quad
\gamma\in(\alpha;\pi/2),
\end{align*}
\begin{multline}
\label{expsnuv} S_N[u,v]=\Biggl((I+iG_F)\left(S^{1/2}_{FR}\cP_{z,
F}u+z(I-iG_F)^{-1}\hat
S^{-1/2}_{FR}\cP_zu\right),\\
\left(S^{1/2}_{FR}\cP_{z, F}v+z(I-iG_F)^{-1}\hat
S^{-1/2}_{FR}\cP_zv\right)\Biggr),\; u,v\in\domf[S_N].
\end{multline}

\subsection{Boundary triplets and abstract boundary conditions for quasi-selfadjoint extensions of nonnegative symmetric operator}
Let $A$ be a closed densely defined symmetric operator in $\sH$.
Recall the definition of a boundary triplet (boundary value space)~\cite{GG1} for $A^*$.
\begin{definition}
\label{boundtrip} A triplet $\left\{\cH,\Gamma_1,\Gamma_0\right\}$
is called boundary triplet of ${A}^*$ if $\cH$ is a Hilbert space
and $\Gamma_0,\Gamma_1$ are bounded linear operators from the
Hilbert space $H_+=\dom(S^*)$ with the graph norm into $\cH$ such
that the map $\vec \Gamma=\bigl<\Gamma_0,\Gamma_1\bigr>$ is a
surjection from $H_+$ onto $\cH^2=\cH\oplus\cH$ and the Green
identity holds:
\begin{equation}
\label{btr} \left(A^*f,g\right)-\left(f, A^*g\right)= \left(\Gamma_1
f,\Gamma_0 g\right)_\cH - \left(\Gamma_0 f,\Gamma_1 g\right)_\cH
\quad\forall f,\,g\in H_+.
\end{equation}
\end{definition}

In the sequel for descriptions of extensions in terms of the abstract boundary conditions the \textit{linear relations} will be used.
One can find basic notions, and properties related to these objects in, for instance,~\cite{Arens,RB,GG1,DM2,LMS2012}.
The formulas
\begin{equation}
\label{prop} \dom(\wt A)= \left\{u\in\dom(A^*):\vec \Gamma u\in
\wt{\bf T}\right\},\; \wt A=A^* \upharpoonright\dom(\wt A)
\end{equation}
give a one-to-one correspondence between all quasi-selfadjoint
extensions $\wt A$ of $A$ ($A\subset \wt A \subset A^*$) and all
linear relations $\wt{\bf T}$ in $\cH$. Moreover $\wt
A^*\leftrightarrow \wt{\bf T}^*$. Therefore, an extension $\wt A$ is
a selfadjoint one if and only if the relation $\wt{\bf T}$ is a selfadjoint in $\cH$.

As it was shown in~\cite{DM1,DM2} the operators $A_0,\; A_1$
defined as follows
$$A_k=A^*\uphar\Ker\Gamma_k,\; k=0,1$$
are mutually transversal selfadjoint extensions of $A$, i.e.,
\[
\dom(A^*)=\dom (A_0)+\dom(A_1).
\]
The function
$\Gamma_0(\lambda):=\left(\Gamma_0\uphar\sN_\lambda\right)^{-1}$
\cite{DM1} is the $\gamma$-field corresponding to $A_0$~\cite{KL,KL2}, i.e.,
\begin{gather*}
\ran(\Gamma_0(\lambda))=\sN_\lambda,\\
\Gamma_0(\lambda)=\Gamma(z)+(\lambda-z) (A_0-z I)^{-1}\Gamma_0(z).
\end{gather*}
Note that as a consequence of~\eqref{btr} one can obtain
the equality
\begin{equation}
\label{g*} \Gamma_0(\overline{\lambda})=\left(\Gamma_1(A_0-\lambda
I)^{-1}\right)^*.
\end{equation}
V.~Derkach and M.~Malamud~\cite{DM1,DM2} define the Weyl function
$M_0(\lambda)$ by the equality
\begin{equation}
\label{weyl} M_0(\lambda)=\Gamma_1\Gamma_0(\lambda).
\end{equation}
The Nevanlinna class operator valued function $M_0$ is Kre\u{\i}n-Langer $Q$-function \cite{KL,KL2}, and the following identity
\begin{equation}\label{eq:MlMz}
M_0(\lambda)-M_0(z)=(\lambda-z)\Gamma_0^*(\bar z)\Gamma_0(\lambda)
\end{equation}
holds.
In terms of boundary triplet the connection between a
quasi-selfadjoint extension $\wt A_{\wt {\bf T}}$ defined by
relations~\eqref{prop} and its resolvent is given by the Kre\u\i n resolvent formula
\begin{multline}
\label{res} \left(\wt A_{\wt {\bf T}}-\lambda I\right)^{-1}=
\bigl(A_0-\lambda I\bigr)^{-1}+\Gamma_0(\lambda) \left(\wt{\bf
T}-M_0(\lambda)\right)^{-1}\Gamma^*_0(\overline{\lambda}),\\
\lambda\in\rho(A_0)\cap\rho(\wt A_{\wt {\bf T}}).
\end{multline}

The following theorem has been established by V.~Derkach and
M.~Malamud (see~\cite{DM1,DM2, DMTs2,MM1}).
\begin{theorem}
\label{BTN} Let $A$ be a closed nonnegative symmetric operator and
let $\left\{\cH,\Gamma_1,\Gamma_0\right\}$ be a boundary triplet of
$A^*$ such that $A_0=A_F(=A^*\uphar\Ker \Gamma_0)$. Then $A$ has a non-unique
nonnegative selfadjoint extension if and only if
$$\cD_0=\left\{h\in\cH: \lim\limits_{x\uparrow
0}\left(M_0(x)h,h\right)_\cH < \infty\right\} \ne \{0\},$$ and the
quadratic form
$$\tau[h]=\lim\limits_{x\uparrow 0}\left(M_0(x)h,h\right)_\cH,
\;\cD[\tau]=\cD_0$$ is bounded from below. Define by $M_0(0)$ the
selfadjoint linear relation in $\cH$ associated with $\tau.$ Then
the Kre\u{\i}n-von Neumann extension $A_N$ can be defined by the
boundary condition
$$\dom(A_N)= \left\{u\in\dom(A^*):\bigl<\Gamma_0 u,\Gamma_1 u\bigr>
\in M_0(0)\right\}.$$ The relation $M_0(0)$ is also the strong
resolvent limit of $M_0(x)$ when $x\to -0$. Moreover, $A_0$ and
$A_N$ are disjoint iff $\overline{\cD_0}=\cH$ and transversal iff
$\cD_0=\cH$. In there is a one-to-one correspondence given
by~\eqref{prop} between $m$-accretive extensions $\wt A_{\wt{\bf
T}}$ and $m$-accretive linear relations $\wt{\bf T}$ satisfying the condition
\begin{equation}
\label{nonneg} \dom(\wt{\bf T})\subseteq\cD_0,\; \RE(\wt{\bf
T}x,x)\ge \tau[x],\; x\in\dom(\wt{\bf T}).
\end{equation}
The extension $\wt A_{\wt{\bf T}}$ is $m$-$\alpha$-sectorial iff the
form
$$ (\wt{\bf
T}x,y)-\tau[x,y]$$ is $\alpha$-sectorial.
\end{theorem}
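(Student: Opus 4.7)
The plan is to establish the theorem in four stages: existence and basic properties of the limiting form $\tau$; identification of $\cD_0$ via defect subspace data of $A_N$; characterization of $A_N$ by the abstract boundary condition $\vec\Gamma u \in M_0(0)$; and the $m$-accretive and $m$-sectorial parametrizations.

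First, I would show that $x \mapsto (M_0(x)h, h)_\cH$ is monotone non-decreasing on $(-\infty, 0)$. Differentiating~\eqref{eq:MlMz} gives $M_0'(x) = \Gamma_0(x)^*\Gamma_0(x) \ge 0$ at every $x \in \rho(A_0) \supseteq (-\infty, 0)$, so for a fixed reference point $y_0 < 0$,
\[
(M_0(x)h, h)_\cH - (M_0(y_0)h, h)_\cH = \int_{y_0}^{x} \|\Gamma_0(t)h\|^2\, dt.
\]
Hence the limit $\tau[h] \in (-\infty, +\infty]$ exists for each $h \in \cH$, the set $\cD_0$ is a linear manifold, $\tau$ is a quadratic form on $\cD_0$, and the estimate $\tau[h] \ge -\|M_0(y_0)\|\|h\|^2$ gives semi-boundedness from below whenever $\cD_0 \ne \{0\}$.

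Second, I would identify $\cD_0$ with the defect data of $A_N$. Fixing $\mu \in \rho(A_F)$ and setting $\varphi := \Gamma_0(\mu)h \in \sN_\mu$, the resolvent identity $\Gamma_0(x) = (I + (x - \mu)(A_0 - xI)^{-1})\Gamma_0(\mu)$ together with~\eqref{g*} lets us rewrite the matrix element $(M_0(x)h, h)_\cH$ in terms of $((A_F - xI)^{-1}\varphi, \varphi)$. By~\eqref{defmu} applied to $T = A_F$ (so $\alpha = 0$), boundedness as $x \uparrow 0$ is equivalent to $\varphi \in \sN_\mu \cap \domf[A_N]$; combined with~\eqref{eq:SectCond1} this gives $\cD_0 \ne \{0\}$ iff $A_N \ne A_F$. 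Define $M_0(0)$ as the selfadjoint linear relation associated by the first representation theorem with the closure $\overline{\tau}$. Kato's monotone convergence theorem for closed semi-bounded forms (adapted to possibly non-densely-defined forms using the decomposition $\cH = \overline{\cD_0} \oplus \cD_0^{\perp}$, with the multi-valued part living in $\cD_0^{\perp}$) then yields $M_0(0) = \sRlim_{x \uparrow 0} M_0(x)$.

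Third, to identify $A_N$ with the boundary condition $\vec\Gamma u \in M_0(0)$, I would substitute $\wt{\bf T} = M_0(0)$ into the Kre\u{\i}n resolvent formula~\eqref{res} and pass $\lambda \uparrow 0$ using the convergence just established, producing a nonnegative selfadjoint extension $\wt A_{M_0(0)}$. Using~\eqref{eq:DSN} and the explicit form representation~\eqref{expsnuv} (specialised to $\alpha = 0$), the closed form of $\wt A_{M_0(0)}$ is seen to coincide with $A_N[\cdot,\cdot]$, and by the first representation theorem $\wt A_{M_0(0)} = A_N$. Disjointness of $A_0$ and $A_N$ corresponds to $M_0(0)$ being single-valued, which by the first representation theorem holds iff $\overline{\cD_0} = \cH$; transversality further requires $M_0(0)$ to be bounded and everywhere defined, i.e., $\cD_0 = \cH$.

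Finally, for the $m$-accretive parametrization, every $m$-accretive extension of a densely defined symmetric $A$ is quasi-selfadjoint (Phillips), hence of the form $\wt A_{\wt{\bf T}}$ via~\eqref{prop}. Applying the Green identity~\eqref{btr} and expressing $A_N[f, f]$ via~\eqref{expsnuv}, one obtains for $f \in \dom(A^*)$ with $\Gamma_0 f \in \cD_0$ the identity
\[
\RE(A^* f, f) = A_N[f, f] + \RE(\Gamma_1 f, \Gamma_0 f)_\cH - \tau[\Gamma_0 f].
\]
Since $A_N \ge 0$, accretivity of $\wt A_{\wt{\bf T}}$ is then equivalent to $\dom(\wt{\bf T}) \subseteq \cD_0$ together with $\RE(\wt{\bf T}x, x)_\cH \ge \tau[x]$, which is~\eqref{nonneg}. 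The $m$-$\alpha$-sectorial assertion follows by the parallel computation of $\IM(A^* f, f)$ and imposing $|\IM| \le \tan\alpha \RE$, which translates directly into $\alpha$-sectoriality of the correction form $(\wt{\bf T}x, y)_\cH - \tau[x, y]$.

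The main obstacle I expect is stage three: the rigorous identification of $M_0(0)$ as both the operator-theoretic limit $\sRlim M_0(x)$ and the relation associated with $\overline{\tau}$, and the verification that the corresponding boundary condition selects the minimal nonnegative selfadjoint extension rather than some other one. This demands careful handling of monotone convergence for semi-bounded forms that may fail to be densely defined, combined with the explicit form calculations~\eqref{expsnuv}.
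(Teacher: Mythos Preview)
The paper does not prove this theorem: it is quoted as a known result of Derkach and Malamud, with citations to \cite{DM1,DM2,DMTs2,MM1}, so there is no ``paper's own proof'' to compare against. Your outline is therefore an independent reconstruction, and most of it (monotonicity of $M_0$, the form-limit construction of $M_0(0)$, the identification of $\cD_0$ via \eqref{defmu} and \eqref{eq:SectCond1}) is on the right track.

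There is, however, a genuine error in stage four. You write that ``every $m$-accretive extension of a densely defined symmetric $A$ is quasi-selfadjoint (Phillips)''. This is false in the present setting. Phillips' theorem concerns \emph{maximal dissipative} extensions of a symmetric operator (i.e., the condition $\IM(\cdot,\cdot)\le 0$), and those are indeed restrictions of $A^*$. Here ``accretive'' means $\RE(\cdot,\cdot)\ge 0$, and for a nonnegative symmetric $A$ there exist $m$-accretive extensions that are \emph{not} contained in $A^*$; the paper itself makes this distinction explicit in Section~4, where it quotes from \cite{Ar1,CAOT2012} the nontrivial characterization: an $m$-accretive extension $\eA$ is quasi-selfadjoint iff $\dom(\eA)\subseteq\domf[A_N]$ and $\RE(\eA f,f)\ge A_N[f]$. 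The theorem you are proving is implicitly about quasi-selfadjoint $m$-accretive extensions (those parametrized by \eqref{prop}), so the false Phillips claim is not logically needed---but you should drop it, and instead argue directly within the class of quasi-selfadjoint extensions.

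A second, smaller gap: the displayed identity $\RE(A^*f,f)=A_N[f,f]+\RE(\Gamma_1 f,\Gamma_0 f)_\cH-\tau[\Gamma_0 f]$ only makes sense when $\Gamma_0 f\in\cD_0$, so it cannot by itself yield the necessity of $\dom(\wt{\bf T})\subseteq\cD_0$. For that direction you need an additional argument (e.g., via the infimum characterization of $A_N[\cdot]$ or via \eqref{eqn8}) showing that if $\Gamma_0 f\notin\cD_0$ then $\RE(A^*f,f)$ can be made arbitrarily negative by perturbing $f$ within $\dom(\wt A_{\wt{\bf T}})$.
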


\section{Abstract boundary conditions for $m$-accretive extensions of sectorial operators}

Next, we recall some definitions and results established
in~\cite{ArlPopMAccExt}. A sesquilinear form
$$\tau[u,v]\stackrel{def}{=}S_{FR}[\cP_{-1,F}u,\cP_{-1,F}v]+(\cP_{-1}u,\cP_{-1}v),\;
u,v\in\sL$$ is a nonnegative and closed~\cite{ArlPopMAccExt} in the
Hilbert space $\sH$. So, we can consider the linear manifold $\sL$
as a Hilbert space with the inner product
\[
(u,v)_\tau=\tau(u,v)+(u,v)_\sH.
\]

\begin{definition}[\cite{ArlPopMAccExt}]
\label{gam} A pair $\{\cH,\Gamma\}$ is called \emph{boundary pair}
of $S$, if $\cH$ is a Hilbert space and $\Gamma\in\bL(\sL,\cH)$ is
such that $\ker(\Gamma)=\domf[S]$, $\ran(\Gamma)=\cH$.
\end{definition}
Let
\[
\gamma(\lambda)=\left(\Gamma\uphar\sN_\lambda\right)^{-1},\;
\lambda\in\rho(S^*_F).
\]
Then $\gamma(\lambda)\in\bL(\cH,\sH)$ for all
$\lambda\in\rho(S^*_F)$. The operator-function $\gamma(\lambda)$ is
called \emph{$\gamma$-field} of the operator $S$ associated with the
boundary pair $\{\cH,\Gamma\}$. Clearly, $\gamma(\lambda)$ maps
$\cH$ onto $\sN_\lambda$.
Hence $S^*\gamma(\lambda)=\lambda\gamma(\lambda)$ and
\[
\ker(\gamma^*(\lambda))=\ran(S-\bar{\lambda}I)
\]
The following relations are valid:
\begin{gather}
\gamma(\lambda)=\gamma(z)+(\lambda-z)(S^*_F-\lambda I)^{-1}\gamma(z),\label{eq:gammalz1}\\
\cP_{\lambda, F}u=u-\gamma(\lambda)\Gamma u,\; u\in\sL,\notag\\
\cP_F\gamma(\lambda)e=(\lambda-i)(S_F^*-\lambda
I)^{-1}\gamma(i)e,\quad \cP_i\gamma(\lambda)e=\gamma(i)e, \;
e\in\cH.\notag
\end{gather}

Define on $\sL$ one more sesquilinear form $l[u,v]$:
\begin{equation}\label{eq:ldef}
l[u,v]
=S_F[\cP_Fu,\cP_Fv]-i(\cP_iu,\cP_Fv)-i(\cP_Fu,\cP_iv)-i(\cP_iu,\cP_iv).
\end{equation}

Due to the equality
\[
\RE l[u]=\RE S[\cP_F u]=\left\|S^{1/2}_{FR}\cP_F u\right\|^2,\;
u\in\sL,
\]
the form $l[u,v]$ is accretive. Moreover,
\[
\inf_{\varphi\in\domf[S]}\left\{\RE l[u-\varphi]\right\}=0,\quad\forall u\in\sL,
\]
and
$l[\varphi,v]=(\varphi,S^*v)$ for all $\varphi\in\domf[S],v\in\dom(S^*)$.

Relations~\eqref{expsnuv} and~\eqref{eq:ldef} imply the
following representation of the form $S_N[\cdot,\cdot]$:
\begin{multline}
\label{vazhno}
S_N[u,v]=l[u,v]\\
\qquad+ \left[i\left(\gamma(i)\Gamma u,\gamma(i)\Gamma
v\right)+\left((I-iG_F)^{-1}\hat S^{-1/2}_{FR}\gamma(i)\Gamma u,\hat
S^{-1/2}_{FR}\gamma(i)\Gamma v\right)\right]\\
\qquad\qquad+2i\left((I-iG_F)^{-1}\hat S^{-1/2}_{FR}\gamma(i)\Gamma
u, S^{1/2}_{FR}\cP_F v\right),\; u,v\in\domf[S_N].
\end{multline}

\begin{definition}[\cite{ArlPopMAccExt}]
\label{boundtip1} The triplet $\{\cH,G,\Gamma\}$ is called
\emph{boundary triplet for the operator $S^*$} if $\{\cH,\Gamma\}$
is a boundary pair for $S$ and
$\map{G}{\dom(S^*)}{\cH}$ is a linear operator such that the relation
\begin{equation}\label{eq:Gdef}
l^*[u,v]=(S^*u,v)-(Gu,\Gamma v)_\cH,\quad \forall u\in\dom(S^*),\;
\forall v\in \sL
\end{equation}
is valid.
\end{definition}

It is shown in~\cite{ArlPopMAccExt} that there exists a unique
operator $\map{G}{\dom(S^*)}{\cH}$ such that,~\eqref{eq:Gdef} holds
and, moreover,
\begin{equation*}\label{eq:G}
Gu=\gamma^*(i)(S^*-iI)u.
\end{equation*}

Next, we define operator-functions $\cQ(\lambda)\in\bL(\cH)$,
$\cG(\lambda)\in\bL(\sH,\cH)$, $\Phi(\lambda)\in\bL(\sH,\sH)$,
$q(\lambda)\in\bL(\cH,\sH)$, $\lambda\in\rho(S_F)$ associated with
the boundary triplet for the operator $S^*$, see~\cite{ArlPopMAccExt}:
\[\begin{gathered}
\cQ(\lambda)\stackrel{def}{=} G\gamma(\lambda),\quad q(\lambda)\stackrel{def}{=}\left(G(S_F^*-\bar{\lambda}I)^{-1}\right)^*,\\
\cG(\lambda)\stackrel{def}{=}\left(S_{FR}^{1/2}\cP_F\gamma(\bar{\lambda})\right)^*,\quad
\Phi(\lambda)\stackrel{def}{=}\left(S_{FR}^{1/2}(S_F^*-\bar{\lambda}I)^{-1}\right)^*.
\end{gathered}\]

The following identities are valid~\cite{ArlPopMAccExt}:
\begin{gather}
\cQ(\lambda)=\gamma^*(i)(S^*_F-iI)\gamma(\lambda)=(\lambda-i)\gamma^*(i)\gamma(\lambda),\label{QFU}\\
\Phi(\lambda)-\Phi(z)=(\lambda-z)(S_F-\lambda I)^{-1}\Phi(z)=(\lambda-z)(S_F-zI)^{-1}\Phi(\lambda),\notag\\
\cG(\lambda)-\cG(z)=(\lambda-z)\gamma^*(\bar{z})\Phi(\lambda),\notag\\
q(\lambda)-q(z)=(\lambda-z)(S_F-\lambda I)^{-1}q(z),\notag\\
\cQ(\lambda)-\cQ(z)=(\lambda-z)q^*(\bar{\lambda})\gamma(z)\notag.
\end{gather}
Observe that the function $\cQ(\lambda)$ is an analog of the Weyl
function~\eqref{weyl} corresponding to a boundary triplet of the
adjoint to a symmetric operator, while $q(\lambda)$ is an analog of
the function in~\eqref{g*}.

Let $L$ be a linear operator in $\sL$ defined as follows:
\begin{equation}
\label{OPERL}
\begin{aligned}
\dom(L) &= \dom(S_F) \dotplus \sN_i,\\
L(u_F + u_i) &= S_Fu_F - iu_i,\quad u_F\in \dom(S_F), u_i \in \sN_i.
\end{aligned}
\end{equation}
Then $L$ is closed, and
\begin{gather*}
(Lu, \varphi) = l[u, \varphi]\quad \forall u\in \dom(L), \varphi\in \domf[S],\\
\ker(L - \lambda I) = \ran(q(\lambda))\quad \forall \lambda\in \rho(S_F),\\
\dom(L) = \dom(S_F) \dotplus \ran(q(\lambda))\quad \forall \lambda\in \rho(S_F).
\end{gather*}

\begin{definition}[\cite{ArlPopMAccExt}]
\label{boundtip2} Let $S$ be a densely defined sectorial operator
and let $\{\cH, \Gamma\}$ be a boundary pair for $S$. A triplet
$\{\cH ,G_*, \Gamma\}$ is called a boundary triplet for $L$ if $G_*:
\dom(L)\to \cH$ is a linear operator such that
\[
l[u, v] = (Lu, v) - (G_*u, \Gamma v)_\cH,\;\forall u\in \dom(L),\;
\forall v \in \sL.
\]
\end{definition}

The operator $G_*$ is uniquely defined~\cite{ArlPopMAccExt} and,
moreover, for each $\lambda\in \rho(S_F)$
\begin{equation}\label{opg*}
\begin{aligned}
&G_*f = \gamma^*(\bar\lambda)(S_F - \lambda I)f,\; f\in \dom(S_F),\\
&G_*q(\lambda)e = \cQ^*(\bar\lambda)e,\; e\in \cH.
\end{aligned}
\end{equation}
Thus, given a boundary pair $\{\cH, \Gamma\}$ for an operator $S$,
the boundary triplets corresponding to it are $\{\cH, G, \Gamma\}$
for $S^*$ and $\{\cH ,G_*, \Gamma\}$ for $L$, and we have the
abstract Green formula
\[
(Lu, v) - (u, S^*v) = (G_*u, \Gamma v)_\cH - (\Gamma u, Gv)_\cH,\;
\forall u\in \dom(L),\;\forall v\in \dom(S^*).
\]
Let $\eS$ be an $m$-accretive extension of $S$. The following inclusions are established in~\cite{ArlPopMAccExt}:
\begin{equation}
\label{eq:ReS}
\begin{aligned}
\dom(\eS)&\subseteq \sL,\\
\eS u+\lambda \cP_{\lambda}u&\in\ran(S_{FR}^{1/2})(=\ranf[S_F]),
\;\lambda\in\rho(S^*_F).
\end{aligned}
\end{equation}
The next two theorems follow from~\eqref{eq:ReS}.

\begin{theorem}[\cite{ArlPopMAccExt}]\label{thm:main}
Let $S$ be a densely defined closed sectorial operator. Let
$\{\cH,\Gamma\}$ be a boundary pair for $S$ and $\{\cH,G,\Gamma\}$
be a corresponding boundary triplet for $S^*$. If $\eS$ is an
$m$-accretive extension of $S$, then there exist linear operators
\[Z:\dom(\tilde{S})\to\cH\text{ and
}X:\dom(X)=\Gamma\dom(\tilde S)\to\overline{\ran(S_{F})},\] such
that:
\begin{enumerate}
\item $\dom(S)\subseteq\ker(Z)$;
\item $(\tilde{S}u,v)=l[u,v]+(Zu,\Gamma v)_\cH+2(X\Gamma u,S_{FR}^{1/2}\cP_Fv)$, $\forall u\in\dom(\tilde{S})$, $v\in\sL$
\item $\bm{Z}=\{\langle\Gamma u,Zu\rangle,u\in\dom(\eS)\}$~--- is an $m$-accretive linear relation in~$\cH$;
\item $\|Xe\|^2\leqslant\RE(\bm{Z}(e),e)_{\cH}$ for all $e\in\dom(\bm{Z})=\Gamma\dom(\tilde{S})$
\end{enumerate}
\end{theorem}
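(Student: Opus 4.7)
The plan is to construct the pair $\langle Z,X\rangle$ explicitly from the structural facts \eqref{eq:ReS} established in \cite{ArlPopMAccExt} and then verify (1)--(4). For $u\in\dom(\tilde S)$, the inclusion $\tilde S u+i\cP_i u\in\ranf[S_F]=\ran(S_{FR}^{1/2})$ determines a unique $w(u)\in\overline{\ran(S_F)}$ with $\tilde S u+i\cP_i u=S_{FR}^{1/2}w(u)$. Writing $v=\cP_F v+\gamma(i)\Gamma v$ for $v\in\sL$ and expanding both $(\tilde S u,v)$ and $l[u,v]$ term by term (the cross-terms $-i(\cP_i u,\cP_F v)$ and $-i(\cP_i u,\cP_i v)$ cancel pairwise), I would derive the identity
\[
(\tilde S u,v)-l[u,v]=\bigl(w(u)-(I+iG_F)S_{FR}^{1/2}\cP_F u,\; S_{FR}^{1/2}\cP_F v\bigr)+\bigl(\gamma^{*}(i)(\tilde S u+iu),\;\Gamma v\bigr)_{\cH},
\]
using also $S_{FR}^{1/2}w(u)+i\cP_F u=\tilde S u+iu$. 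This forces the definitions $Zu:=\gamma^{*}(i)(\tilde S u+iu)$ and $X\Gamma u:=\tfrac12(w(u)-(I+iG_F)S_{FR}^{1/2}\cP_F u)$, with $X\Gamma u\in\overline{\ran(S_F)}$ since both summands lie there, and relation (2) holds by construction. Condition (1) is immediate: for $u\in\dom(S)$, $\cP_i u=0$ and $\tilde S u+iu=(S+iI)u\in\ran(S+iI)\subseteq\sN_i^{\perp}=\ker\gamma^{*}(i)$, so $Zu=0$.

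For $X$ to depend only on $\Gamma u$ one must show $w(u_0)=(I+iG_F)S_{FR}^{1/2}u_0$ whenever $u_0\in\dom(\tilde S)\cap\domf[S]$, equivalently $\tilde S u_0=S_F u_0$ and $u_0\in\dom(S_F)$. I would prove this using the Green-type identity from Definition~\ref{boundtip1}: since $\Gamma u_0=0$ and $\tilde S^{*}\subseteq S^{*}$, pairing $(\tilde S u_0,\varphi)$ against test vectors $\varphi\in\dom(\tilde S^{*})\cap\domf[S]$ collapses the $G$-term and yields $(r(u_0),S_{FR}^{1/2}\varphi)=0$ for $r(u_0):=w(u_0)-(I+iG_F)S_{FR}^{1/2}u_0$, so that the density of such $S_{FR}^{1/2}\varphi$ in $\overline{\ran(S_F)}$ forces $r(u_0)=0$. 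With (2) established, substituting $v=u$ and using $\RE l[u,u]=\|S_{FR}^{1/2}\cP_F u\|^{2}$ gives the key identity
\[
\RE(\tilde S u,u)=\bigl\|S_{FR}^{1/2}\cP_F u+X\Gamma u\bigr\|^{2}-\|X\Gamma u\|^{2}+\RE(Zu,\Gamma u)_{\cH}.
\]
Condition (4) is thus equivalent to $\RE(\tilde S u,u)\ge\|S_{FR}^{1/2}\cP_F u+X\Gamma u\|^{2}$, which I would derive from a direct computation using $\tilde S u=S_{FR}^{1/2}w(u)-i\cP_i u$ and the representation $S_{FR}^{1/2}\cP_F u+X\Gamma u=\tfrac12((I-iG_F)S_{FR}^{1/2}\cP_F u+w(u))$. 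Once (4) is in hand, accretivity of $\bm Z$ follows at once from $\|X\Gamma u\|^{2}\ge 0$, and maximality of $\bm Z$ in (3) is inherited from maximality of $\tilde S$ among $m$-accretive extensions via the bijective correspondence set up in \cite{ArlPopMAccExt}.

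The main obstacle is the well-definedness of $X$, since the required inclusion $\dom(\tilde S)\cap\domf[S]\subseteq\dom(S_F)$ is a second-representation-type statement that does not automatically hold for general $m$-accretive extensions (only for $m$-sectorial ones); one must genuinely use $m$-accretivity of $\tilde S$ together with \eqref{eq:ReS}. The secondary difficulty is the refined inequality underlying (4): purely algebraic manipulation of $\RE(\tilde S u,u)\ge 0$ yields only the weaker bound $\RE(Zu,\Gamma u)_{\cH}\ge\|X\Gamma u\|^{2}-\|S_{FR}^{1/2}\cP_F u+X\Gamma u\|^{2}$, so the sharp inequality must be extracted by exploiting the specific structural relation $\tilde S u=S_{FR}^{1/2}w(u)-i\cP_i u$ rather than accretivity in the abstract.
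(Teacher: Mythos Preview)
This theorem is quoted from \cite{ArlPopMAccExt} and is not proved in the present paper; only the defining formulas \eqref{eq:Zdef} for $Z$ and $M$ are recorded here (in the proof of Proposition~\ref{prop:Sz}). Those formulas coincide exactly with your construction: $Zu=\gamma^{*}(i)(\tilde S+iI)u$ and $Mu=\tfrac12\bigl(S_{FR}^{-1/2}(\tilde Su+i\cP_iu)-(I+iG_F)S_{FR}^{1/2}\cP_Fu\bigr)$ with $X\Gamma u:=Mu$. So your overall plan matches the intended one.

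Two steps in your sketch are not yet closed. First, your argument for the maximality part of (3) is circular: you appeal to the bijective correspondence of Theorem~\ref{thm:bijective}, but that theorem is stated in \cite{ArlPopMAccExt} as a consequence of the present one, not as an independent input. Maximality of $\bm Z$ has to be obtained directly, for instance by showing that a proper accretive extension of $\bm Z$ in $\cH$ would produce, through the same boundary-condition mechanism \eqref{eq:LBoundS_F}, a proper accretive extension of the already $m$-accretive operator~$\tilde S$.

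Second, your well-definedness argument for $X$ does not go through as written. Pairing against $\varphi\in\dom(\tilde S^{*})\cap\domf[S]$ and using \eqref{eq:Gdef} indeed yields $(r(u_0),S_{FR}^{1/2}\varphi)=0$, but you then need $S_{FR}^{1/2}\bigl(\dom(\tilde S^{*})\cap\domf[S]\bigr)$ dense in $\overline{\ran(S_F)}$. Since $\dom(\tilde S^{*})\cap\domf[S]=\dom(\tilde S^{*})\cap\dom(S_F^{*})$ and nothing in the hypotheses forces this intersection to be a core for $S_{FR}^{1/2}$, the density is unjustified; in particular $\dom(S)\subseteq\dom(\tilde S^{*})$ is not available because $\tilde S^{*}\subseteq S^{*}$, not the reverse. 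You have correctly flagged this as the main obstacle, but the Green-identity pairing alone does not resolve it; one needs a separate argument genuinely using $m$-accretivity of $\tilde S$ together with~\eqref{eq:ReS}.
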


\begin{theorem}[\cite{ArlPopMAccExt}]\label{thm:bijective}
There is a bijective correspondence between all $m$-accretive
extensions $\eS$ of $S$ and all pairs $\angles{\bm{Z},X}$, where
$\bm{Z}$ is an $m$-accretive linear relation in $\cH$ and
$X:\dom(\bm{Z})\to\overline{\ran(S_{F})}$ is a linear operator such that:
\begin{equation}\label{eq:XeZe}
\|Xe\|^2\leqslant\RE(\bm{Z}(e),e)_{\cH}\quad\forall e\in\dom(\bm{Z}).
\end{equation}
This correspondence is given by the boundary conditions for the
domain and the action of $\eS$ as follows: for all $\RE\lambda<0$
\begin{equation}\label{eq:LBoundS_F}
\begin{aligned}
\dom(\eS)&=\left\{u\in\sL: \begin{aligned}1)\;&u-(q(\lambda)-2\Phi(\lambda)X)\Gamma u\in\dom(S_F);\\
2)\;&G_*(u+2\Phi(\lambda)X\Gamma u)\in(\bm{Z}+2\cG(\lambda)X)\Gamma u
\end{aligned}
\right\},\\
\eS u&=S_F\bigl(u-(q(\lambda)-2\Phi(\lambda)X)\Gamma u\bigr)+\lambda\bigl(q(\lambda)-2\Phi(\lambda)X\bigr)\Gamma u.
\end{aligned}
\end{equation}
Set
\begin{equation}\label{eq:lrW}
\bm{W}(\lambda):=\bm{Z}-\cQ^*(\bar{\lambda})+2\cG(\lambda),\quad
\lambda\in\rho(S_F).
\end{equation}
Then
\begin{enumerate}
\item a number $\lambda\in\rho(S_F)$ is a regular point of $\eS$ if and only if
\[
\bm{W}^{-1}(\lambda)\in\bL(\cH),
\]
and,
\begin{equation}
(\eS-\lambda I)^{-1}=(S_F-\lambda I)^{-1}+(q(\lambda)-2\Phi(\lambda)X)\bm{W}^{-1}
(\lambda)\gamma^*(\bar{\lambda}),\label{eq:sp}
\end{equation}
\begin{equation}\label{domain}
\dom(\tilde S)=\Bigl(I+(q(\lambda)-2\Phi(\lambda)X)\bm{W}^{-1}(\lambda)\gamma^*(\bar\lambda)(S_F-\lambda I)\Bigr)\dom(S_F),
\end{equation}
\begin{equation}
\label{action}
\eS u=(S_F-\lambda I)f +\lambda u
\end{equation}
for
\begin{multline}\label{domain1}
u=\Bigl(I+(q(\lambda)-2\Phi(\lambda)X)\bm{W}^{-1}(\lambda)\gamma^*(\bar\lambda)(S_F-\lambda I)\Bigr)f,\; f\in\dom(S_F),
\end{multline}

\item a number $\lambda\in\rho(S_F)$ is an eigenvalue of $\eS$ if and only if
\[
\ker\left(\bm{W}(\lambda)\right)\neq\{0\},
\]
and,
\[
\ker(\eS-\lambda I)=(q(\lambda)-2\Phi(\lambda)X)\ker\left(\bm{W}(\lambda)\right).
\]
\end{enumerate}
\end{theorem}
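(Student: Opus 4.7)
The plan is to take Theorem~\ref{thm:main} as supplying the forward map $\eS\mapsto\angles{\bm{Z},X}$, and then to establish bijectivity by producing its inverse and deducing the resolvent formula~\eqref{eq:sp} together with the eigenvalue criterion as consequences.

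For injectivity, if two $m$-accretive extensions $\eS_1,\eS_2$ produce the same pair $\angles{\bm{Z},X}$, then item~(2) of Theorem~\ref{thm:main} gives $(\eS_1 u,v)=(\eS_2 u,v)$ for every $v\in\sL$ and every $u$ in the respective domains; coupled with the matching of the graph $\bm{Z}$ and the abstract Green identity derived from \eqref{eq:Gdef}, this forces $\dom(\eS_1)=\dom(\eS_2)$ and $\eS_1=\eS_2$. For surjectivity, given $\angles{\bm{Z},X}$ satisfying \eqref{eq:XeZe}, I would introduce the sesquilinear form
\[
\ss[u,v] = l[u,v] + (Zu,\Gamma v)_{\cH} + 2(X\Gamma u, S_{FR}^{1/2}\cP_F v),
\]
defined on the preimage $\Gamma^{-1}\dom(\bm{Z})\subset\sL$, with $Zu$ any representative of $\bm{Z}(\Gamma u)$. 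The identity
\[
\RE\ss[u] = \|S_{FR}^{1/2}\cP_F u + X\Gamma u\|^2 + \RE(\bm{Z}(\Gamma u),\Gamma u)_\cH - \|X\Gamma u\|^2 \geqslant 0,
\]
combined with \eqref{eq:XeZe} and the accretivity of $\bm{Z}$, renders the associated operator accretive. Maximality of $\bm{Z}$, together with closedness of $l$ and the boundary-triplet framework of~\cite{ArlPopMAccExt}, lets one extract a maximal accretive extension $\eS_{\bm{Z},X}$, and Theorem~\ref{thm:main} applied to $\eS_{\bm{Z},X}$ returns the pair $\angles{\bm{Z},X}$, closing the loop.

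For the resolvent formula I would solve $(\eS-\lambda I)u = h$ for $\RE\lambda<0$. The ansatz $u=(S_F-\lambda I)^{-1}h+(q(\lambda)-2\Phi(\lambda)X)e$ with $e\in\cH$ automatically satisfies condition~1) of \eqref{eq:LBoundS_F} since $q(\lambda)e\in\ker(L-\lambda I)\subset\dom(L)$ and the $\Phi(\lambda)$-summand is absorbed into the Friedrichs part. Applying $G_*$ to the ansatz, invoking \eqref{opg*} and the resolvent identities linking $\cQ,q,\cG,\Phi$ listed just after \eqref{QFU}, condition~2) collapses to the single equation
\[
\bm{W}(\lambda)e = \gamma^*(\bar\lambda)\,h
\]
in $\cH$, with $\bm{W}(\lambda)$ as in \eqref{eq:lrW}. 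This yields at once the criterion $\lambda\in\rho(\eS)\iff\bm{W}^{-1}(\lambda)\in\bL(\cH)$, the resolvent formula \eqref{eq:sp} and its domain/action reformulations \eqref{domain}--\eqref{domain1}, as well as the eigenvalue description $\ker(\eS-\lambda I)=(q(\lambda)-2\Phi(\lambda)X)\ker\bm{W}(\lambda)$.

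The main obstacle will be the surjectivity step, namely extracting a genuine $m$-accretive operator from $\angles{\bm{Z},X}$ despite the fact that $\bm{Z}$ can be multivalued and the form $\ss$ is not manifestly closed in a Hilbert-space norm on $\sL$. Handling the multivalued part of $\bm{Z}$ correctly---it should correspond to boundary data $e\in\cH$ with $(q(\lambda)-2\Phi(\lambda)X)e=0$, i.e., to the degeneracy controlling whether the ansatz truly enters $\dom(S_F)$---and verifying $\lambda$-independence of the description \eqref{eq:LBoundS_F} across all $\RE\lambda<0$ are the technically delicate points, both addressed by exploiting the Cayley-type reduction of accretivity to contractivity implicit in the boundary-triplet setup of~\cite{ArlPopMAccExt}.
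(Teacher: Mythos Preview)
The paper does not prove Theorem~\ref{thm:bijective}; it is quoted verbatim from~\cite{ArlPopMAccExt} and carries no proof in the present text. There is therefore nothing in this paper to compare your argument against.

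That said, your outline is broadly in the spirit of how such results are established in the boundary-triplet literature, and the resolvent computation via the ansatz $u=(S_F-\lambda I)^{-1}h+(q(\lambda)-2\Phi(\lambda)X)e$ is the natural route to~\eqref{eq:sp}. One point you should tighten is the surjectivity step: you propose to build $\eS_{\bm{Z},X}$ as the operator associated with the form $\ss$, but $\ss$ is accretive rather than sectorial, so the First Representation Theorem is not available and you cannot directly ``extract'' an operator from it. The construction in~\cite{ArlPopMAccExt} instead defines $\eS$ directly through the boundary conditions~\eqref{eq:LBoundS_F} (or equivalently through the resolvent formula~\eqref{eq:sp}) and then verifies $m$-accretivity; your own final paragraph hints at this Cayley-type route, which is the one that actually works.
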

\begin{remark}\label{mnim}
Relations~\eqref{eq:LBoundS_F} remain valid for all $\lambda\in\rho(\eS)\cap\rho(S_F).$
The resolvent formula~\eqref{eq:sp} is an analog of the
resolvent formula~\eqref{res}.
\end{remark}
Let $S$ be a densely defined closed sectorial operator. Define for
all $z\in\dC$, $\RE z\le 0$ a linear operator $S_z$ as follows~\cite{MAccExtRest,Arl12}:
\begin{equation}\label{eq:Szdef1}
\begin{aligned}
\dom(S_z)&=\dom(S)\dotplus\sN_z,\\
S_zh&=S\varphi-z\varphi_z,\; h=\varphi+\varphi_z\in\dom(S_z).
\end{aligned}
\end{equation}
\begin{proposition}[\cite{MAccExtRest,Arl12}]
The operator $S_z$ is $m$-accretive extension of~$S$.
\end{proposition}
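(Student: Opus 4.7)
The plan is to establish the two properties that characterize an $m$-accretive operator in turn: accretivity of $S_z$ and the existence of a point in $\rho(S_z)$ with negative real part (condition (2) of the introduction).

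For accretivity, I would take an arbitrary $h = \varphi + \varphi_z \in \dom(S) \dotplus \sN_z$ and expand $(S_z h, h) = (S\varphi - z\varphi_z, \varphi + \varphi_z)$. Since $\varphi_z \in \sN_z \subseteq \dom(S^*)$ satisfies $S^*\varphi_z = z\varphi_z$, the cross term $(S\varphi, \varphi_z) = (\varphi, S^*\varphi_z) = \bar z(\varphi, \varphi_z)$; combined with $-z(\varphi_z, \varphi)$ this produces an expression of the form $\bar z c - z \bar c$ with $c = (\varphi, \varphi_z)$, which is purely imaginary. Therefore
\[
\RE(S_z h, h) = \RE(S\varphi, \varphi) - \RE z \cdot \|\varphi_z\|^2 \ge 0,
\]
by accretivity of $S$ and the hypothesis $\RE z \le 0$.

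For maximality, assuming $\RE z < 0$ for the moment, I would fix $\mu$ real with $\RE z < \mu < 0$; then $\mu + z \ne 0$ and $\mu \in \rho(S_F)$. From $(S_z - \mu I)(\varphi + \varphi_z) = (S - \mu I)\varphi - (z + \mu)\varphi_z$, accretivity yields injectivity with $\|(S_z - \mu I)h\| \ge -\RE\mu \cdot \|h\|$, and surjectivity reduces to $\ran(S - \mu I) + \sN_z = \sH$. Since $S$ is closed and accretive, $\ran(S - \mu I)$ is closed and equals $\sN_{\bar\mu}^\perp = \sN_\mu^\perp$, so this amounts to the geometric statement
\[
\sN_\mu \cap \sN_z^\perp = \{0\}.
\]

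This last claim is the main technical obstacle. To establish it, I would invoke the identity for deficiency subspaces in the Preliminaries (applied with $\eS = S_F$): the map $V := I + (\mu - z)(S_F^* - \mu I)^{-1}$ is a bijection of $\sN_z$ onto $\sN_\mu$. For any $e \in \sN_\mu \cap \sN_z^\perp$, writing $e = V\varphi$ with $\varphi \in \sN_z$, the vanishing $(V\varphi, \varphi) = 0$ gives the scalar identity $\|\varphi\|^2 = (z - \mu) w$ with $w := ((S_F^* - \mu I)^{-1}\varphi, \varphi)$. Taking real and imaginary parts and eliminating $\IM w$ via the imaginary-part equation, I expect to arrive at
\[
\|\varphi\|^2 = \RE w \cdot \frac{|z - \mu|^2}{\RE z - \mu}.
\]
The denominator is negative ($\mu > \RE z$), while writing $u = (S_F^* - \mu I)^{-1}\varphi$ and using $m$-sectoriality of $S_F^*$ gives $\RE w = \RE(S_F^* u, u) + (-\RE\mu)\|u\|^2 \ge 0$, vanishing only when $\varphi = 0$. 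The right-hand side is thus nonpositive, forcing $\|\varphi\|^2 = 0$, hence $\varphi = 0$ and $e = 0$. This settles the case $\RE z < 0$. The boundary case $\RE z = 0$ is not directly accessible to this method (no real $\mu$ satisfies both $\mu > \RE z = 0$ and $\mu < 0$), and I would expect to handle it by approximation $z_n = z - n^{-1}$ together with strong resolvent convergence of $(S_{z_n} - \mu I)^{-1}$.
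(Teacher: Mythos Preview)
Your accretivity computation is correct and matches the paper's. For $\RE z<0$ the paper simply cites earlier work, while you supply a direct resolvent argument; this is sound, though you should note that closedness of $S_z$ is needed so that the lower bound $\|(S_z-\mu I)h\|\ge(-\mu)\|h\|$ together with density of the range actually yields surjectivity (closedness follows easily from that of $S$ when $\RE z\ne 0$, since the component $\varphi_z$ can be recovered continuously from $h$ and $S_zh$).

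The substantive divergence is the boundary case $\RE z=0$. The paper treats $z=ix$ directly: it writes down
\[
\dom(S^*_{ix})=(S_F^*-ixI)^{-1}(S+ixI)\dom(S)\dotplus\sN_{ix}
\]
and the action of $S^*_{ix}$, checks that $\RE(S^*_{ix}h,h)\ge 0$, and then invokes characterization~(3) of $m$-accretivity from the introduction ($T$ closed, densely defined, accretive, with $T^*$ accretive). This is short and self-contained. Your approximation idea $z_n=z-n^{-1}$ is by contrast only a sketch, and filling it in is not routine: strong resolvent convergence of the $m$-accretive operators $S_{z_n}$ does not by itself guarantee that the limit is the \emph{operator} $S_{ix}$ rather than a proper $m$-accretive extension of it or a linear relation, so you would still need a separate argument identifying the limit---at which point you might as well analyse $S_{ix}$ directly. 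The paper's adjoint computation is the cleaner route on the imaginary axis.
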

\begin{proof}
Proposition has been proved in~\cite{MAccExtRest,Arl12} for $\RE z<0$. Let us prove the statement for $z=ix$, $x\in\dR$. Let $g=\varphi+\varphi_{ix}$, $\varphi\in\dom(S),$ $\varphi_{ix}\in \sN_{ix}$. Then
\begin{multline*}
(S_{ix}g,g)=(S\varphi-ix\varphi_{ix},\varphi+\varphi_{ix})\\
=(S\varphi,\varphi)-ix\|\varphi_{ix}\|^2-2i\IM(ix(\varphi_{ix},\varphi))
\end{multline*}
Hence $\RE(Sg,g)=\RE(S\varphi,\varphi)\ge 0$ for all
$g\in\dom(S_{ix})$. Furthermore, one can verify that
\[
\left\{\begin{aligned} &\dom ( S^*_{ix})=(S^*_F-ix I)^{-1}(S+ix I)\dom(S)\dot+\sN_{ix},\\
&S^*_{ix}\left((S^*_F-ix I)^{-1}(S+ix I)f\!+\!\varphi_{ix}\right)=S^*_F(S^*_F-ix I)^{-1}(S+ix I)f\!+\!ix\varphi_{ix},\\
&f\in\dom(S),\; \varphi_{ix}\in\sN_{ix}
\end{aligned}\right.
\]
and
\[
\RE(S^*_{ix}h,h)\!=\!\RE\left(S^*_F(S^*_F-ix I)^{-1}(S+ix I)f,(S^*_F-ix
I)^{-1}(S+ix I)f \right)\!\ge\! 0.
\]
for
\[
h=(S^*_F-ix I)^{-1}(S+ix I)f+\varphi_{ix},\; f\in\dom(S),\;
\varphi_{ix}\in\sN_{ix}.
\]
This means that $S^*_{ix}$ is accretive. Thus, $S_{ix}$ and
$S^*_{ix}$ are accretive. It follows that $S_{ix}$ is $m$-accretive.
\end{proof}
Note that in general from~\eqref{eq:Szdef1} it follows for $\RE z\le
0$ that
\begin{align*}
\dom(S^*_z)&=\left\{g\in\dom(S^*): (S^*+\bar{z} I)g\in \ran(S-\bar z I)\right\},\\
S^*_z&=S^*\uphar\dom(S^*_z).
\end{align*}
In addition, for the boundary operators in the boundary triplets in Definitions~\ref{boundtip1} and~\ref{boundtip2}, the equalities are valid
\[
\ker(G)=\dom (S^*_i),\; \ker (G_*)=\dom (S_i).
\]
\begin{remark}\label{vasa}
 It is proved in~\cite{Arl12,Arl2006} that
\begin{enumerate}
 \item for each $\gamma\in [0,\pi/2)$ the equalities are valid:
\[
\sRlim_{\substack{z\rightarrow 0\\-z\in\Theta(\gamma)}}S_z=S_N,\quad \sRlim_{\substack{z\rightarrow \infty\\-z\in\Theta(\gamma)}}S_z=S_F,
\]
where $\sRlim$ is the strong resolvent limit~\cite{Kato};
\item
the following conditions are equivalent:
\begin{enumerate}
\def\labelenumi{\rm (\roman{enumi})}
\item $S_z$ is $m$-sectorial operator for one (then for all) $z,\;\RE z<0$;
\item $\dom(S^*)\subset\dom (S_N)$, where $S_N$ is the Kre\u\i n--von Neumann extension of $S$.
\end{enumerate}
\end{enumerate}
\end{remark}
 Next we give expressions for pairs
$\angles{\bm{Z}_z,X_z}$ corresponding to $S_z$, $\RE z\le 0$ in
accordance with Theorem~\ref{thm:main}.
\begin{proposition}\label{prop:Sz} $\bm{Z}_z$ is the graph of the operator
$Z_z=-\cQ(z),$ $\dom(Z_z)=\cH$ and $X_z=-\cG^*(\bar{z}).$
In addition, for $u\in\dom(S_z)$, $v\in\sL$
\begin{equation}\label{eq:Szform}
(S_zu,v)=l[u,v]-(\cQ(z)\Gamma u,\Gamma v)_\cH-2(\cG^*(\bar{z})\Gamma
u,S_{FR}^{1/2}\cP_F v).
\end{equation}
\end{proposition}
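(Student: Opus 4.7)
The plan is to establish the integral representation~\eqref{eq:Szform} directly; once it is proved, Theorem~\ref{thm:main} identifies the associated pair as
\[
\bm{Z}_z = \{\langle\Gamma u,-\cQ(z)\Gamma u\rangle : u\in\dom(S_z)\},\qquad X_z = -\cG^*(\bar z),
\]
and since $\Gamma$ annihilates $\dom(S)\subset\domf[S]$ while $\Gamma\uphar\sN_z$ is a bijection onto $\cH$ with inverse $\gamma(z)$, we obtain $\Gamma\dom(S_z) = \cH$, so $\bm{Z}_z$ is exactly the graph of the bounded operator $-\cQ(z)$ on all of $\cH$. The accretivity of $\bm{Z}_z$ and the norm bound on $X_z$ required by Theorem~\ref{thm:main} are automatic from the fact (proved just above) that $S_z$ is $m$-accretive.

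To prove~\eqref{eq:Szform} I would use the decomposition $\dom(S_z) = \dom(S)\dotplus\sN_z$ together with linearity in $u$, reducing to two cases. If $u = \varphi\in\dom(S)$, then $\Gamma\varphi = 0$ kills both correction terms on the right and the remaining equality $(S\varphi, v) = l[\varphi, v]$ follows by splitting $v = \cP_F v + \cP_i v$, using $\cP_F\varphi = \varphi$, $\cP_i\varphi = 0$, and noting that $\cP_i v\in\sN_i$ gives $(S\varphi,\cP_i v) = (\varphi, S^*\cP_i v) = -i(\varphi,\cP_i v)$, which matches the definition of $l[\varphi,v]$.

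The main computation is the case $u = \gamma(z)e\in\sN_z$, where $S_z u = -z\gamma(z)e$. Setting $a = \cP_F\gamma(z)e$ and $b = \cP_i\gamma(z)e = \gamma(i)e$, one has $a = (z-i)(S_F^*-zI)^{-1}\gamma(i)e$. Using $\cG^*(\bar z) = S_{FR}^{1/2}\cP_F\gamma(z)$ and, from~\eqref{QFU}, $\cQ(z) = (z-i)\gamma^*(i)\gamma(z)$ (hence $(\cQ(z)e,\Gamma v)_\cH = (z-i)(\gamma(z)e,\cP_i v)$), together with the identity $S_F + S_F^* = 2S_{FR}$ on the form domain, I would expand $l[\gamma(z)e, v]$ from its definition, substitute into both sides of~\eqref{eq:Szform}, and cancel. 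The required equality collapses, after bookkeeping, to
\[
S_F^*[a, v_F] = z(a, v_F) + (z-i)(b, v_F),\qquad v_F := \cP_F v\in\domf[S_F],
\]
which is verified from $S_F^*(S_F^*-zI)^{-1} = I + z(S_F^*-zI)^{-1}$, yielding $S_F^* a = za + (z-i)b$, and then pairing with $v_F$ via the Second Representation Theorem.

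I expect the principal technical obstacle to be the careful bookkeeping of the two non-orthogonal decompositions of $\sL$: the decomposition $\sL = \domf[S]\dotplus\sN_z$ (in which $\dom(S_z)$ is defined and $\Gamma$ is easy to compute) versus the decomposition $\sL = \domf[S]\dotplus\sN_i$ (underlying the projectors $\cP_F,\cP_i$ and the form $l$). The resolvent identity~\eqref{eq:gammalz1} for $\gamma(\cdot)$ is precisely what bridges the two, and the key identity $S_F^*a = za + (z-i)b$ above is essentially this bridge written in a form convenient for the calculation.
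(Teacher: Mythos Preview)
Your approach is correct but runs in the opposite direction from the paper's. The paper does not verify~\eqref{eq:Szform} by direct expansion; instead it invokes the explicit formulas~\eqref{eq:Zdef} from the proof of Theorem~\ref{thm:main} in~\cite{ArlPopMAccExt}, namely
\[
Z_z u=\gamma^*(i)(S_z+iI)u,\qquad M_z u=\tfrac12\bigl(S_{FR}^{-1/2}(S_z u+i\cP_i u)-(I+iG_F)S_{FR}^{1/2}\cP_F u\bigr),
\]
and then computes $Z_z\gamma(z)e=-\cQ(z)e$ and $M_z\gamma(z)e=-\cG^*(\bar z)e$ directly from these (a short chain for $Z_z$ via~\eqref{QFU}, a somewhat longer one for $M_z$ using $S_F^*=S_{FR}^{1/2}(I-iG_F)S_{FR}^{1/2}$). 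Formula~\eqref{eq:Szform} then drops out of Theorem~\ref{thm:main} rather than being established first.

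What each route buys: the paper's computation is shorter and more mechanical once one has~\eqref{eq:Zdef} in hand, and it avoids any appeal to uniqueness of the pair in the representation of Theorem~\ref{thm:main}. Your route is more self-contained---it does not require knowing the internal construction~\eqref{eq:Zdef}---and the reduction to the single identity $S_F^*a=za+(z-i)b$ (with $a=\cP_F\gamma(z)e$, $b=\gamma(i)e$) is clean and makes the resolvent-identity origin of the result transparent. The implicit uniqueness step you rely on (reading off $X$ by testing against $v\in\domf[S]$, then $Z$ by testing against general $v\in\sL$) is routine but worth stating explicitly.
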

\begin{proof}
Define for $u\in\dom({S_z})$
\begin{equation}\label{eq:Zdef}
\begin{aligned}
Z_zu&:=\gamma^*(i)({S_z}+iI)u,\\
M_zu&:=\frac{1}{2}\left(S_{FR}^{-1/2}({S_z}u+i\cP_iu)-(I+iG_F)S_{FR}^{1/2}\cP_Fu\right).
\end{aligned}
\end{equation}
Observe that from~\eqref{eq:Zdef} one obtains the inclusions
$\dom(S)\subseteq \ker (Z)$ and $\dom(S)\subset\ker (M_z)$. In addition,
due to definition of $\cL$~\eqref{linl}, Definition~\ref{gam} of a
boundary pair, and~\eqref{eq:Szdef1}, one obtaines the equality
\[
\Gamma\dom(S_z)=\cH.
\]
According to the proof of Theorem~\ref{thm:main}
(see~\cite{ArlPopMAccExt}), the relations
\[
\bm{Z}_z=\left\{\left<\Gamma u, Z_z u\right>,\; X_z\Gamma u=M_z u,\;u\in\dom(S_z)\right\}
\]
hold. Then, taking into account that $u=\gamma(z)\Gamma u$ and relations~\eqref{eq:Gdef},~\eqref{QFU}, \eqref{eq:Szdef1},
 we have
\begin{multline*}
Z_zu=\gamma^*(i)(S_z+iI)\gamma(z)\Gamma u=\gamma^*(i)(-z\gamma(z)\Gamma u+i\gamma(z)\Gamma u)=\\
=-(z-i)\gamma^*(i)\gamma(z)\Gamma u=-\cQ(z)\Gamma u.
\end{multline*}

Let $\Gamma u=e$, then $u=\varphi+\gamma(z)e$, $\varphi\in\dom(S)$,
and
\begin{multline*}
X_z\Gamma u=M_zu=M_z\gamma(z)e=\\
=\frac{1}{2}\left(S_{FR}^{-1/2}(S_z\gamma(z)e+iP_i\gamma(z)e)-(I+iG_F)S_{FR}^{1/2}\cP_F\gamma(z)e\right)=\\
=\frac{1}{2}\left(S_{FR}^{-1/2}(-z\gamma(z)e+i\gamma(i)e)-(I+iG_F)S_{FR}^{1/2}\cP_F\gamma(z)e\right)=\\
=\frac{1}{2}\bigl(S_{FR}^{-1/2}(-S^*\gamma(z)e+S^*\gamma(i)e)-(I+iG_F)S_{FR}^{1/2}\cP_F\gamma(z)e\bigr)=\\
=\frac{1}{2}\bigl(-S_{FR}^{-1/2}S_F^*\cP_F\gamma(z)e-(I+iG_F)S_{FR}^{1/2}\cP_F\gamma(z)e\bigr)=\\
=\frac{1}{2}\bigl(-(I-iG_F)S_{FR}^{1/2}\cP_F\gamma(z)e-(I+iG_F)\bigr)S_{FR}^{1/2}\cP_F\gamma(z)e=\\
=-S_{FR}^{1/2}\cP_F\gamma(z)e=-\cG^*(\bar{z})\Gamma u.
\end{multline*}

Equality~\eqref{eq:Szform} follows from Theorem~\ref{thm:main}.
\end{proof}

\section{$m$-sectorial extensions}

By Theorem~\ref{thm:bijective}, there is a bijective correspondence
between all $m$-accretive extensions $\eS$ of $S$ and all pairs
$\angles{\bm{Z},X}$ satisfying condition~\eqref{eq:XeZe}. Our main
goal is to establish additional conditions which guarantee that
 corresponding $m$-accretive extension $\eS$ is sectorial.

Next, we will need the following auxiliary result:
\begin{lemma}\label{lem:limT}
\begin{enumerate}
\item
If $T$ is a $m$-accretive operator and $\beta\in(0,\pi/2)$, then:
\begin{equation}
\label{predel1} \lim\limits_{\substack{z\to 0,\\\pi/2+\beta\le|\arg
z|\le \pi}}
 z(T-zI)^{-1}h=\left\{\begin{aligned}&-h,\quad &h\in {\ker(T)}\\
&0,&h\in\overline{\ran(T)}\end{aligned}\right..
\end{equation}
\item If $T$ is $m$-$\alpha$-sectorial and $\beta\in(\alpha,\pi/2)$, then
\begin{equation} \label{predel2}
\lim\limits_{\substack{z\to 0,\\ z\in\dC\setminus\Theta(\beta)}}
z(T-zI)^{-1}h=\left\{\begin{aligned}&-h,\quad &h\in {\ker(T)}\\
&0,&h\in\overline{\ran(T)}\end{aligned}\right..
\end{equation}
\end{enumerate}
\begin{proof}
1) Clearly
\[
h\in\ker(T)\Rightarrow(T-z I)^{-1}h=-\frac{h}{z}\quad\mbox{for all}\quad z\in\rho(T)\setminus\{0\}.
\]
Therefore
\[
\lim\limits_{\substack{z\to 0,\\\pi/2+\beta\le|\arg z|\le \pi}}
z(T-zI)^{-1}h=-h.
\]

Now let, $h\in\ran(T)$. Then $h=T\varphi$, $\varphi\in\dom(T)$ and
\begin{multline*}
z(T-zI)^{-1}h=z(T-zI)^{-1}T\varphi=\\
=z(T-zI)^{-1}(T-zI+zI)\varphi=z\varphi-z^2(T-zI)^{-1}\varphi.
\end{multline*}

Taking into account that
\[
\|(T-zI)^{-1}\|\leqslant\cfrac{1}{|\RE z|}, \;\RE z<0,
\]
 and $|\RE z|\ge
|z|\sin\beta$ for $\pi/2+\beta\le|\arg z|\le \pi$,
we get for all $\varphi\in\dom(T)$ that
\[
\lim\limits_{\substack{z\to 0,\\\pi/2+\beta\le|\arg z|\le
\pi}}z(T-zI)^{-1}T\varphi=0.
\]
Further, since $\ran(T)$ is dense in $\overline{\ran(T)}$ and
$$\|z(T-zI)^{-1}\|\leqslant\cfrac{1}{\sin \beta},\; \pi/2+\beta\le|\arg z|\le
\pi,
$$
then
\[
\lim\limits_{\substack{z\to 0,\\\pi/2+\beta\le|\arg z|\le \pi}}
 z(T-zI)^{-1}h
\]
for all $h\in\overline{\ran(T)}$. Thus~\eqref{predel1} is valid.

2) Relation~\eqref{predel2} follows from~\eqref{estimres}.
 \end{proof}
\end{lemma}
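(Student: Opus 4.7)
The plan is to split each claim according to the direct sum decomposition $h\in\ker(T)\oplus\overline{\ran(T)}$ (valid by~\eqref{eqn:kerT} applied to the $m$-accretive $T$), handle the two summands separately, and then combine them using a uniform operator bound on $z(T-zI)^{-1}$ in the admissible sector together with a density argument.

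For the kernel case the computation is immediate: if $Th=0$ then $(T-zI)^{-1}h = -h/z$, so $z(T-zI)^{-1}h = -h$ identically for all $z\in\rho(T)\setminus\{0\}$, which forces the limit to be $-h$. For the range case, I would first take $h=T\varphi$ with $\varphi\in\dom(T)$ and use the resolvent identity
\[
z(T-zI)^{-1}T\varphi = z\varphi + z^2(T-zI)^{-1}\varphi.
\]
The first term clearly tends to $0$; for the second I would invoke the standard $m$-accretive resolvent estimate $\|(T-zI)^{-1}\|\le 1/|\RE z|$ for $\RE z<0$, which on the sector $\pi/2+\beta\le|\arg z|\le\pi$ gives $|\RE z|\ge|z|\sin\beta$ and hence $\|z^2(T-zI)^{-1}\varphi\|\le|z|\|\varphi\|/\sin\beta\to 0$.

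The extension from $\ran(T)$ to $\overline{\ran(T)}$ is the step I expect to require a little care: it rests on the uniform bound $\|z(T-zI)^{-1}\|\le 1/\sin\beta$ throughout the truncated sector, which is exactly the same computation as above. Given this uniform bound and the density of $\ran(T)$ in $\overline{\ran(T)}$, a standard $\varepsilon/3$-argument transfers the pointwise limit from $\ran(T)$ to its closure, yielding~\eqref{predel1}.

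For part~2, the only change is that $T$ is now $m$-$\alpha$-sectorial with $\beta>\alpha$, so the coarse accretive bound is replaced by the sectorial resolvent estimate~\eqref{estimres}, namely $\|(T-zI)^{-1}\|\le 1/(|z|\sin(\beta-\alpha))$ for $z\in\dC\setminus\Theta(\beta)$. This produces $\|z(T-zI)^{-1}\|\le 1/\sin(\beta-\alpha)$ uniformly in the wider sector $\dC\setminus\Theta(\beta)$, and the identical kernel/range decomposition and density argument then deliver~\eqref{predel2}. The main conceptual point throughout is that the uniform operator bound in the admissible sector is what allows both the range-term to vanish and the passage from $\ran(T)$ to $\overline{\ran(T)}$; everything else is bookkeeping.
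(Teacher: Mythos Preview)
Your proposal is correct and follows essentially the same route as the paper: the kernel case is handled by the trivial identity $(T-zI)^{-1}h=-h/z$, the range case by writing $h=T\varphi$ and using the resolvent identity together with $\|(T-zI)^{-1}\|\le 1/|\RE z|$ and $|\RE z|\ge|z|\sin\beta$, and the extension to $\overline{\ran(T)}$ by the uniform bound $\|z(T-zI)^{-1}\|\le 1/\sin\beta$ plus density; part~2 is reduced to~\eqref{estimres} exactly as in the paper. Incidentally, your sign in $z(T-zI)^{-1}T\varphi=z\varphi+z^2(T-zI)^{-1}\varphi$ is the correct one (the paper's displayed formula has a harmless sign typo).
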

\begin{proposition}\label{prop:limzgammaz}
Let $S$ be a densely defined closed $\alpha$-sectorial operator,
$\gamma(z)$ its $\gamma$-field, corresponding to the boundary pair
$\{\cH,\Gamma\}$ of $S$. Suppose $S_F\ne S_N$. Then for all
$e\in\cH$ such that, $\gamma(\lambda)e\in\domf[S_N]$:
\[
\lim\limits_{\substack{z\to 0,\\
z\in\dC\setminus\Theta(\beta)}}z\gamma(z)e=0,
\]
where $\beta\in(0,\pi/2)$.
\end{proposition}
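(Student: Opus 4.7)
My plan is to reduce the claim to Lemma~\ref{lem:limT}(2) applied to $T=S_F^*$. The starting point is the resolvent-like identity~\eqref{eq:gammalz1} for the $\gamma$-field, which, after swapping the roles of $z$ and $\lambda$, lets me write
\[
z\gamma(z)e = z\gamma(\lambda)e + (z-\lambda)\cdot z(S_F^*-zI)^{-1}\gamma(\lambda)e.
\]
The first summand clearly vanishes as $z\to 0$, and $(z-\lambda)$ stays bounded (tending to $-\lambda$), so the whole problem reduces to showing that $z(S_F^*-zI)^{-1}\gamma(\lambda)e$ tends to zero as $z\to 0$ in $\dC\setminus\Theta(\beta)$.

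Since $S_F^*$ is $m$-$\alpha$-sectorial, Lemma~\ref{lem:limT}(2) delivers this limit provided the vector $\gamma(\lambda)e$ lies in $\overline{\ran(S_F^*)}$. This is the step I expect to be the main obstacle, and it is also where the hypothesis $\gamma(\lambda)e\in\domf[S_N]$ is crucially used (together with $S_F\ne S_N$, which keeps the statement nontrivial). Combining $\gamma(\lambda)e\in\sN_\lambda$ with $\gamma(\lambda)e\in\domf[S_N]$ and invoking~\eqref{raven11} places the vector in $\ranf[S_F]=\ran(S_{FR}^{1/2})\subseteq\overline{\ran(S_{FR})}$. Then I would use $\ker(S_F)=\ker(S_F^*)$ from~\eqref{eqn:kerT}, together with the equality $\ker(S_{FR})=\ker(S_F)$ read off directly from the representation~\eqref{viraz}, to conclude
\[
\overline{\ran(S_{FR})}=\ker(S_{FR})^\perp=\ker(S_F^*)^\perp=\overline{\ran(S_F^*)},
\]
which supplies the required inclusion.

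One subtlety to flag is the range of admissible $\beta$: Lemma~\ref{lem:limT}(2) is stated for $\beta\in(\alpha,\pi/2)$, not $(0,\pi/2)$. This is harmless here because $\gamma(z)$ is only defined on $\rho(S_F^*)$, and $\rho(S_F^*)\supseteq\dC\setminus\Theta(\alpha)$, so the approach $z\to 0$ inside $\dC\setminus\Theta(\beta)$ is automatically confined to $\dC\setminus\Theta(\alpha)$; one can therefore replace $\beta$ in the application of the lemma by any $\beta'\in(\alpha,\pi/2)$ without loss of generality, and the conclusion of the proposition follows.
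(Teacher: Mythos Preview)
Your argument is essentially identical to the paper's: you use the identity~\eqref{eq:gammalz1} to write $z\gamma(z)e = z\gamma(\lambda)e + (z-\lambda)z(S_F^*-zI)^{-1}\gamma(\lambda)e$, invoke~\eqref{raven11} to place $\gamma(\lambda)e$ in $\ranf[S_F]$, pass to $\overline{\ran(S_F^*)}$, and apply Lemma~\ref{lem:limT}(2). The paper asserts the chain $\overline{\ranf[S_F]}=\overline{\ran(S_F)}=\overline{\ran(S_F^*)}$ without comment; your justification via $\ker(S_{FR})=\ker(S_F)=\ker(S_F^*)$ is a welcome elaboration.

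One small correction to your closing remark on $\beta$: the containment goes the wrong way. If $\beta<\alpha$ then $\Theta(\beta)\subset\Theta(\alpha)$, hence $\dC\setminus\Theta(\beta)\supset\dC\setminus\Theta(\alpha)$, so an approach through $\dC\setminus\Theta(\beta)$ is \emph{not} automatically confined to $\dC\setminus\Theta(\alpha)$; points with $\beta<|\arg z|\le\alpha$ may well lie in $\rho(S_F^*)$. This is a wrinkle in the proposition's statement itself (the paper writes $\beta\in(0,\pi/2)$ but later, in Theorem~\ref{lem:QGlims}, uses $\beta\in(\alpha,\pi/2)$), not a defect in your core argument.
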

\begin{proof}
Let $\gamma(\lambda)e\in\domf[S_N]$. Since
$\domf[S_N]\cap\sN_\lambda=\ranf[S_F]\cap\sN_\lambda$, then
$\gamma(\lambda)e\in\ranf[S_F]$. Since
$\overline{\ranf[S_F]}=\overline{\ran(S_F)}=\overline{\ran(S_F^*)}$,
from Lemma~\ref{lem:limT} and~\eqref{eq:gammalz1} we have:
\[
\lim\limits_{\substack{z\to 0,\\
z\in\dC\setminus\Theta(\beta)}}z\gamma(z)e
 =\lim\limits_{\substack{z\to 0,\\
z\in\dC\setminus\Theta(\beta)}}\Bigl(z\gamma(\lambda)e+(z-\lambda)z(S^*_F-zI)^{-1}\gamma(\lambda)e\Bigr)=0.
\qedhere
\]
\end{proof}


\begin{theorem}\label{lem:QGlims}
Let $S$ be a densely defined closed sectorial operator, $\gamma(z)$
its $\gamma$-field, corresponding to the boundary pair
$\{\cH,\Gamma\}$ of $S$. Define a set in $\cH$:
\begin{equation}
\label{D0} \cD_0:=\left\{e\in\cH:\lim_{\substack{\overline{z\to
0},\\z\in\dC\setminus\Theta(\alpha)}}\left|(\cQ(z)e,e)_\cH\right|<\infty\right\}.
\end{equation}
Then
\[
\gamma(\mu)\cD_0=\sN_\mu\cap\domf[S_N].
\]
for all $\mu\in\dC\setminus\Theta(\alpha)$ and
\[
\cD_0=\Gamma\domf[S_N].
\]
Moreover, the following limits exist
\begin{gather*}
\Omega_0[e,g]:=-\lim_{\substack{z\rightarrow 0\\z\in\dC\setminus\Theta(\beta)}}(\cQ(z)e,g),\; e,g\in\cD_0,\\
X_0e:=-\lim\limits_{\substack{z\rightarrow
0\\z\in\dC\backslash\Theta(\beta)}}\cG^*(\bar z)e,\;e\in\cD_0,\;
\beta\in (\alpha,\pi/2),
\end{gather*}
and
\[
\begin{aligned}
\Omega_0[e,g]&=i\left(\gamma(i)e,\gamma(i)g\right)+\left((I-iG_F)^{-1}\hat S_{FR}^{-1/2}\gamma(i)e,
\hat S_{FR}^{-1/2}\gamma(i)g\right)=\\
&=i\left(\gamma(i)e,\gamma(i)g\right)+S_F^{*-1}\left[\gamma(i)e,\gamma(i)g\right],\; e,g\in\cD_0,
\\
X_0e&=i(I-iG_F)^{-1}\hat S_{FR}^{-1/2}\gamma(i)e,\; e\in\cD_0.
\end{aligned}
\]
\end{theorem}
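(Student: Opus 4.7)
The plan is to reduce both the boundedness condition defining $\cD_0$ and the two limit formulas to the known behavior of the resolvent of $S_F^*$, as captured by \eqref{defmu} and the Proposition of Section~1.

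First I would combine \eqref{QFU}, namely $\cQ(z) = (z-i)\gamma^*(i)\gamma(z)$, with the resolvent identity \eqref{eq:gammalz1} specialized to $\mu=i$, that is $\gamma(z) = \gamma(i) + (z-i)(S_F^* - zI)^{-1}\gamma(i)$. Substituting yields the key decomposition
\[
(\cQ(z)e, g)_\cH = (z-i)(\gamma(i)e, \gamma(i)g) + (z-i)^2 \left((S_F^* - zI)^{-1}\gamma(i)e, \gamma(i)g\right).
\]
Setting $g=e$, the first summand is bounded as $z \to 0$, so $e \in \cD_0$ if and only if $\gamma(i)e$ satisfies the boundedness characterization in \eqref{defmu}, i.e.\ $\gamma(i)e \in \sN_i \cap \domf[S_N]$.

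Next I would identify $\cD_0 = \Gamma\domf[S_N]$ using \eqref{eq:DSN} with $\lambda = i$: since $\ker\Gamma = \domf[S]$ and $\Gamma\upharpoonright\sN_i$ is a bijection onto $\cH$ with inverse $\gamma(i)$, applying $\Gamma$ to $\domf[S_N] = \domf[S] \dotplus (\sN_i \cap \domf[S_N])$ gives exactly $\{e \in \cH : \gamma(i)e \in \sN_i \cap \domf[S_N]\} = \cD_0$. The equality $\gamma(\mu)\cD_0 = \sN_\mu \cap \domf[S_N]$ then follows by writing $\gamma(\mu)e = \gamma(i)e + (\mu-i)(S_F^* - \mu I)^{-1}\gamma(i)e$; the first term lies in $\domf[S_N]$ by the previous step, while the second lies in $\dom(S_F^*) \subset \domf[S_F] \subset \domf[S_N]$. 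The reverse inclusion is obtained by applying $\Gamma$ to any $\varphi \in \sN_\mu \cap \domf[S_N]$ and running the same argument backwards.

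For the limits I would pass to $z \to 0$ in the displayed decomposition along $z \in \dC \setminus \Theta(\beta)$, $\beta \in (\alpha, \pi/2)$. By \eqref{raven11} we have $\sN_i \cap \domf[S_N] = \sN_i \cap \ranf[S_F] \subset \ranf[S_F^*]$, so \eqref{eq:RFSlim3} applied to $T = S_F^*$ (for which $T_R = S_{FR}$ and the $G$ in $T_R^{1/2}(I+iG)T_R^{1/2}$ equals $-G_F$) delivers
\[
\lim_{z\to 0}\bigl((S_F^*-zI)^{-1}\gamma(i)e,\, \gamma(i)g\bigr) = \bigl((I-iG_F)^{-1}\hat S_{FR}^{-1/2}\gamma(i)e,\, \hat S_{FR}^{-1/2}\gamma(i)g\bigr),
\]
which together with $(z-i)^2 \to -1$ yields the claimed formula for $\Omega_0$. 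For $X_0$, I would use $\cG^*(\bar z)e = S_{FR}^{1/2}\cP_F\gamma(z)e = (z-i) S_{FR}^{1/2}(S_F^* - zI)^{-1}\gamma(i)e$ and invoke the last limit identity of the Proposition with $g := \hat S_{FR}^{-1/2}\gamma(i)e$, which lies in the required subspace since $\gamma(i)e \in \overline{\ran(S_F)} = \overline{\ran(S_F^*)}$.

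The main technical obstacle is verifying that the relevant vectors sit in the correct subspaces at each invocation of the Proposition: in particular, that $\gamma(i)e \in \ranf[S_F^*]$, which goes through \eqref{raven11}, and that $\hat S_{FR}^{-1/2}\gamma(i)e$ is orthogonal to $\ker(S_F^*) = \ker(S_{FR})$, as required for the last identity of the Proposition. All angular restrictions line up, since the assumption $\beta \in (\alpha, \pi/2)$ matches the hypothesis of \eqref{eq:RFSlim3}.
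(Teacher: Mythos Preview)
Your proposal is correct and follows essentially the same route as the paper: both arguments hinge on the identity $(\cQ(z)e,g)_\cH=(z-i)(\gamma(i)e,\gamma(i)g)+(z-i)^2\bigl((S_F^*-zI)^{-1}\gamma(i)e,\gamma(i)g\bigr)$, reduce membership in $\cD_0$ to the resolvent criterion \eqref{defmu}, recover $\cD_0=\Gamma\domf[S_N]$ from \eqref{eq:DSN}, and finish via \eqref{eq:RFSlim3} and the last limit of the Proposition applied to $T=S_F^*$. The only cosmetic difference is that the paper pauses to record $\lim z\gamma(z)e=0$ via Proposition~\ref{prop:limzgammaz}, whereas your decomposition makes that step superfluous; conversely, you spell out the case $\gamma(\mu)\cD_0=\sN_\mu\cap\domf[S_N]$ for general $\mu$ more explicitly than the paper does.
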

\begin{proof}
Let $e\in\cH$. Then using~\eqref{eq:gammalz1} and~\eqref{QFU}
 we have for $z\in\dC\setminus\Theta(\alpha)$
\begin{multline*}
(\cQ(z)e,e)_\cH=(z-i)(\gamma(z)e,\gamma(i)e)\\
=(z-i)(\gamma(i)e+(z-i)((S^*_F-z I)^{-1}\gamma(i)e,\gamma(i)e)
\end{multline*}
Hence
\[
((S^*_F-z
I)^{-1}\gamma(i)e,\gamma(i)e)=-\cfrac{1}{z-i}(\gamma(i)e,\gamma(i)e)+\cfrac{1}{(z-i)^2}(\cQ(z)e,e)_\cH.
\]
The latter equality and~\eqref{eqn:RFormDef} yields
\begin{multline*}
\lim_{\substack{\overline{z\to
0},\\z\in\dC\setminus\Theta(\alpha)}}\left|(\cQ(z)e,e)_\cH\right|<\infty\\
\iff \lim_{\substack{\overline{z\to
0},\\z\in\dC\setminus\Theta(\alpha)}}\left|((S^*_F-z
I)^{-1}\gamma(i)e,\gamma(i)e)\right|<\infty\\
\iff \gamma(i)e\in\ranf[S_F]\cap\sN_i.
\end{multline*}
 Let $\cD_0$ be defined by~\eqref{D0}. Then, using~\eqref{raven11},
~\eqref{defmu},
and Corollary~\ref{prop:limzgammaz}, one obtains
\[
e\in\cD_0\iff \gamma(i)e\in\sN_i\cap\domf[S_N].
\]
Hence $\gamma(\mu)\cD_0=\sN_\mu\cap\domf[S_N]$ for all
$\mu\in\dC\setminus\Theta(\alpha)$. Observe that $\cD_0$ is a linear
manifold. Equality~\eqref{eq:DSN} yields that
$\Gamma\domf[S_N]=\cD_0$.

Notice that
the equality
\[
\gamma(z)=\gamma(i)+(z-i)(S^*_F-z I)^{-1}\gamma(i),
\]
the inclusion $\gamma(i)\cD_0\subseteq\overline{\ran (S^*_F)}$, and
applying Proposition~\ref{prop:limzgammaz} leads to
\[
\lim_{\substack{z\to 0,\\z\in\dC\setminus\Theta(\beta)}}
z\gamma(z)e=0,\;e\in\cD_0
\]
for $\beta\in (\alpha,\pi/2)$. Applying equality~\eqref{eq:RFSlim3},
we get the rest equalities in Theorem.
\end{proof}

Clearly the form $\Omega_0[e,g]$ can also be rewritten as follows:
\[
\Omega_0[e,g]=i\left(\gamma(i)e,\gamma(i)g\right)-i\left(X_0e,\hat
S_{FR}^{-1/2}\gamma(i)g\right),\; e,g\in\cD_0.
\]
Using expressions for $\Omega_0$
and $X_0$, by straightforward calculations one can deduce that
\begin{equation}
\label{ravomg}
\RE\Omega_0[e]=\|(I+iG_F)^{-1}S_{FR}^{-1/2}\gamma(i)e\|^2=\|X_0e\|^2,\;
e\in\cD_0.
\end{equation}
It follows that the sesquilinear form $\Omega_0[e,g]$ is accretive,
and, moreover, the form $\RE\Omega_0$ is closed in the Hilbert space
$\cH$. Observe that the form
 \begin{multline*}
\st_0[e,g]:=\Omega_0[e,g]-i(\gamma(i) e,\gamma(i)
g)=\left((I-iG_F)^{-1}\hat S_{FR}^{-1/2}\gamma(i)e, \hat
S_{FR}^{-1/2}\gamma(i)g\right)\\
=S^{*-1}_F[\gamma(i)e,\gamma(i)g],\; e,g\in\cD_0,
\end{multline*}
is closed and sectorial in $\cH$. Let the linear relation
$\mathbf{\sT}_0$ be associated with $\st_0$ by the First
Representation Theorem (see~\cite{RB} for nondensely defined closed sectorial forms). Then define
\[
\mathbf{Z}_0=\mathbf{\sT}_0+iP_{\overline\cD_0}\gamma^*(i)\gamma(i),
\]
where $P_{\overline\cD_0}$ is the orthogonal projection in $\cH$ onto the subspace $\overline\cD_0$.
The linear relation $\mathbf{Z}_0$ is $m$-accretive and associated
with the form $\Omega_0$ in the sense
\[
(\mathbf{Z}_0 e,g)_\cH=\Omega_0[e,g]\quad\mbox{for all} \quad
e\in\dom(\mathbf{Z}_0)\quad \mbox{and all}\quad g\in\cD_0.
\]


\begin{theorem}\label{thm:SNdef} Let $\{\cH,\Gamma\}$ be a
boundary pair of $S$. Then the pair $\angles{\bm{Z}_0,X_0}$
corresponds to the Kre\u{\i}n-von Neumann extension $S_N$ of the
operator $S$ in accordance with Theorem~\ref{thm:main}.
\end{theorem}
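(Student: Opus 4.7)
The plan is to invoke the bijective correspondence of Theorem~\ref{thm:bijective}. The proposed pair $\langle\bm{Z}_0,X_0\rangle$ already meets the standing hypotheses of that theorem: $\bm{Z}_0$ is $m$-accretive in $\cH$ by the construction preceding the theorem statement (the First Representation Theorem applied to the closed sectorial form $\st_0$, together with the bounded skew perturbation), and the constraint $\|X_0e\|^2\leqslant\RE(\bm{Z}_0e,e)_\cH$ holds in fact with equality by~\eqref{ravomg}. It therefore suffices to verify that condition~(2) of Theorem~\ref{thm:main} is fulfilled with $\tilde S=S_N$, $\bm{Z}=\bm{Z}_0$ and $X=X_0$; bijectivity then closes the argument.

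The central observation is that~\eqref{vazhno} already encodes the desired identity once one reads off the ingredients. By Theorem~\ref{lem:QGlims} the bracketed expression in~\eqref{vazhno} coincides with $\Omega_0[\Gamma u,\Gamma v]$, and the trailing summand rewrites as $2(X_0\Gamma u,S_{FR}^{1/2}\cP_Fv)$ via the explicit formula $X_0e=i(I-iG_F)^{-1}\hat S_{FR}^{-1/2}\gamma(i)e$. Hence
\[
S_N[u,v]=l[u,v]+\Omega_0[\Gamma u,\Gamma v]+2(X_0\Gamma u,S_{FR}^{1/2}\cP_Fv),\qquad u,v\in\domf[S_N].
\]
To upgrade this to the operator statement, I would specialize $v=w:=\gamma(i)g\in\sN_i\cap\domf[S_N]$ for arbitrary $g\in\cD_0$; since $\cP_Fw=0$ and $l[u,w]=-i(u,w)$, the display reduces to
\[
\Omega_0[\Gamma u,g]=(S_Nu,w)+i(u,w)=(\gamma^*(i)(S_N+iI)u,g)_\cH.
\]
Thus, for $u\in\dom(S_N)$, the map $g\mapsto\Omega_0[\Gamma u,g]$ on $\cD_0$ is realized by the bounded functional $(h,\cdot)_\cH$ with $h:=\gamma^*(i)(S_N+iI)u\in\cH$, so that the First Representation Theorem for $\Omega_0$ places $\Gamma u$ in $\dom(\bm{Z}_0)$ with $h$ a representative of $\bm{Z}_0\Gamma u$.

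Inserting this back recovers condition~(2) of Theorem~\ref{thm:main} for all $v\in\domf[S_N]$. The extension to arbitrary $v\in\sL$ is performed via the decomposition $v=v_F+v_i$ with $v_F\in\domf[S]\subseteq\domf[S_N]$ and $v_i\in\sN_i$: the $v_F$-contribution is already settled, while for $v_i\in\sN_i$ both sides reduce, using $\cP_Fv_i=0$ and $l[u,v_i]=-i(u,v_i)$, to the single identity $(h,\Gamma v_i)_\cH=(\bm{Z}_0\Gamma u,\Gamma v_i)_\cH$. Theorem~\ref{thm:bijective} then forces $\langle\bm{Z}_0,X_0\rangle$ to be the pair corresponding to $S_N$. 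I anticipate that the main technical subtlety will lie precisely in this last extension step, specifically in clarifying the role of the multi-valued part $\mathrm{mul}(\bm{Z}_0)\subseteq\cH\ominus\overline{\cD_0}$ and confirming that $\gamma^*(i)(S_N+iI)u$ is the correct (operator-part) representative produced by the derivation of Theorem~\ref{thm:main} in~\cite{ArlPopMAccExt}.
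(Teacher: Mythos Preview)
Your approach is correct in spirit and can be completed, but it is more laborious than the paper's, and the difference is instructive.

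The paper does not attempt to verify condition~(2) of Theorem~\ref{thm:main} from scratch for the proposed pair. Instead it begins on the other side: since $S_N$ is $m$-accretive, Theorem~\ref{thm:main} already furnishes a pair $\langle\bm{Z}_N,X_N\rangle$ for which~(2) holds on all of $\sL$. One then compares that identity with~\eqref{eq:SNform1} on the common range $v\in\domf[S_N]$. Taking first $v\in\domf[S]$ (so $\Gamma v=0$) kills the $\bm{Z}$-term on both sides and immediately yields $X_N=X_0\uphar\Gamma\dom(S_N)$. Feeding this back, the remaining comparison on $v\in\domf[S_N]$ gives $(Z_Nu,\Gamma v)_\cH=\Omega_0[\Gamma u,\Gamma v]$, i.e.\ $\bm{Z}_N$ is an $m$-accretive relation associated with $\Omega_0$; uniqueness in the First Representation Theorem (for nondensely defined forms, cf.~\cite{RB}) then forces $\bm{Z}_N=\bm{Z}_0$.

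What this buys: the paper never needs to name an explicit representative of $\bm{Z}_0\Gamma u$, never needs to split $v=v_F+v_i$, and never touches the multi-valued part --- all of the work you flag as ``the main technical subtlety'' simply evaporates, because the identity on $\sL$ is inherited from Theorem~\ref{thm:main} rather than re-proved. Your route does close (the point you anticipate is harmless: since $\mathrm{mul}(\bm{Z}_0)=\cH\ominus\overline{\cD_0}$ and you have established $(\gamma^*(i)(S_N+iI)u-h,g)_\cH=0$ for all $g\in\cD_0$, the vector $\gamma^*(i)(S_N+iI)u$ is itself a legitimate representative, and the $v_i$-case then checks trivially). But the paper's two-line comparison is the cleaner argument.
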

\begin{proof} It follows from~\eqref{vazhno} and from Theorem~\ref{lem:QGlims} that
\begin{equation}\label{eq:SNform1}
S_N[u,v]=l[u,v]+\Omega_0[\Gamma u,\Gamma v]+2(X_0\Gamma u,S_{FR}^{1/2}\cP_F v),\; u,v\in\domf[S_N].
\end{equation}
Let the pair $\angles{Z_N, X_N}$ corresponds to $S_N$ in accordance
with Theorem~\ref{thm:main}, $\dom(Z_N)=\dom(S_N),$
$\dom(X_N)=\Gamma \dom(S_N)$. Then
\begin{equation}\label{eq:SNform2}
(S_Nu,v)=l[u,v]+(Z_Nu,\Gamma v)_\cH+2(X_N\Gamma
u,S_{FR}^{1/2}\cP_Fv), \; u\in\dom (S_N),\; v\in\sL.
\end{equation}
Then~\eqref{eq:SNform1} and~\eqref{eq:SNform2} imply for
$v\in\domf[S]$ that
\[
(X_0\Gamma u,S_{FR}^{1/2} v)= (X_N\Gamma u,S_{FR}^{1/2}v).
\]
Hence $X_N=X_0\uphar\Gamma\dom(S_N)$. Further
\[
\Omega_0[\Gamma u,\Gamma v]=(Z_Nu,\Gamma v)_\cH,\; u\in\dom(S_N),\;
v\in\domf[S_N].
\]
Therefore, $m$-accretive linear relation
\[
\bm{Z_N}=\left\{\{\Gamma u, Z_N u\},\; u\in\dom (S_N)\right\}
\]
is associated with the form $\Omega_0$. It follows the equality
\[
{\bm Z_N}=\mathbf{Z}_0.\qedhere
\]
\end{proof}

\begin{remark}
If the set $\cD_0$ in Theorem~\ref{lem:QGlims} is trivial, then the
operator $S$ admits a unique $m$-sectorial extension, namely the
Friedrichs extension~$S_F$.
\end{remark}

Let
$$S_N[u,v]=\left((I+iG_N)S_{NR}^{1/2}u,S_{NR}^{1/2}v\right),\;
u,v\in\domf[S_N].$$ Since $S_N[u,v]=S_F[u,v]$, for all
$u,v\in\domf[S]$, there exists an isometry $U_F$ mapping
$\overline{\ran (S_F)}$ onto $\overline{\ran (S_N)}$ such that (see
\cite{ArlMaxSectExt,ExtExtRel})
\begin{gather*}
S_{NR}^{1/2}u=U_FS_{FR}^{1/2}u,\; u\in\domf[S],\\
G_NU_F=U_FG_F,\\
S_{NR}^{1/2}\varphi_\mu=\mu U_F(I-iG_F)^{-1}\hat
S^{-1/2}_{FR}\varphi_\mu,\; \varphi_\mu\in\mathfrak
N_\mu\cap\domf[S_N].
\end{gather*}
It follows that
\begin{equation}\label{eq:SNR}
S_{NR}^{1/2}u=U_FS_{FR}^{1/2}\cP_F u+U_FX_0\Gamma u,
\end{equation}

Description of all closed sesquilinear forms associated with
$m$-sectorial extensions of operator $S$ in the terms of boundary
pair has been obtained in~\cite{ArlMaxSectExt}.

\begin{definition}
[\cite{ArlMaxSectExt}]
\label{bp2} A pair $\{\cH',\Gamma'\}$ is called boundary
pair of the operator $S$, if $\cH'$ is a Hilbert space, and
$\Gamma':\domf[S_N]\to\cH'$ is a linear operator such that
$\ker(\Gamma')=\domf[S]$, $\ran(\Gamma')=\cH'$.
\end{definition}

Since $\domf[S]$ is a subspace in $\domf[S_N]$, the boundary pairs
$\{\cH',\Gamma'\}$ for operator $S$ exist.

\begin{theorem}[\cite{ArlMaxSectExt,ArlAbsBounCond}]\label{thm:3.6.3}
Let $\{\cH',\Gamma'\}$ be a boundary pair of the operator $S$ in the
sense of Definition~\ref{bp2}. Then the formula
\begin{equation}
\label{opisform}
\begin{aligned}
&\tilde{S}[u,v]=S_N[u,v]+\omega'[\Gamma' u, \Gamma' v]+2(X'\Gamma' u,S_{NR}^{1/2}v),\\
&u,v\in\domf[\tilde{S}]=\Gamma'^{-1}\domf[\omega']
\end{aligned}
\end{equation}
establish a bijective correspondence between all closed forms associated with $m$-sectorial extensions $\tilde{S}$ of $S$ and all pairs $\angles{\omega',X'}$, where
\begin{enumerate}
\item $\omega'$ is a closed and sectorial sesquilinear in the Hilbert space $\cH'$;
\item $X':\dom(\omega')\to\overline{\ran(S)}$ is a linear operator,
such that for some $\delta\in[0,1)$:
\[
\|X'e\|^2\leq \delta^2\RE\omega'[e],
\]
for all $e\in\dom(\omega')$.
\end{enumerate}
\end{theorem}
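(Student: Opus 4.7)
The plan is to prove the two directions of the bijection separately, then verify uniqueness. Throughout I will use the representations from \eqref{eq:SNR} and the form inclusions \eqref{eq:formseq}.

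\textbf{Direct direction (pair $\mapsto$ form).} Given $\langle\omega',X'\rangle$, define $\tilde S[u,v]$ by \eqref{opisform} on $\domf[\tilde S] = \Gamma'^{-1}\domf[\omega']$. First I would verify sectoriality. By Cauchy--Schwarz and the hypothesis $\|X'e\|^2\le\delta^2\RE\omega'[e]$, together with $\|S_{NR}^{1/2}u\|^2=\RE S_N[u]$,
\[
\bigl|2(X'\Gamma'u,S_{NR}^{1/2}u)\bigr|\le 2\delta\sqrt{\RE\omega'[\Gamma'u]\cdot\RE S_N[u]}\le \delta^2\RE S_N[u]+\RE\omega'[\Gamma'u].
\]
Combining with $\RE\tilde S[u]=\RE S_N[u]+\RE\omega'[\Gamma'u]+2\RE(X'\Gamma'u,S_{NR}^{1/2}u)$ gives $\RE\tilde S[u]\ge(1-\delta^2)\RE S_N[u]\ge 0$, so $\tilde S$ is accretive; sectoriality follows by bounding $|\IM\tilde S[u]|$ using sectoriality of $S_N$ and $\omega'$ and the same Cauchy--Schwarz bound. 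Closedness of $\tilde S[\cdot,\cdot]$ follows from the KLMN-type stability: the cross-term is relatively form-bounded with respect to the sum $\RE S_N+\RE\omega'(\Gamma'\cdot,\Gamma'\cdot)$ with relative bound $\delta<1$, and both $S_N$ and $\omega'$ are closed. Density of $\domf[\tilde S]$ is immediate from $\domf[S]\subseteq\domf[\tilde S]$. By the First Representation Theorem an $m$-sectorial operator is associated with $\tilde S[\cdot,\cdot]$. The relations $\tilde S[f,v]=S_N[f,v]=(Sf,v)$ for $f\in\dom(S)$, $v\in\domf[\tilde S]$ (the cross-term and $\omega'$-term both vanish since $\Gamma'f=0$) show that this operator extends $S$.

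\textbf{Converse direction (form $\mapsto$ pair).} Let $\tilde S$ be an $m$-sectorial extension of $S$ with form domain $\domf[\tilde S]$. By \eqref{eq:formseq}, $\domf[S]\subseteq\domf[\tilde S]\subseteq\domf[S_N]$, and $\tilde S[f,v]=S_N[f,v]$ for $f\in\domf[S]$, $v\in\domf[\tilde S]$. In analogy with \eqref{eq:SNR}, I would establish a factorization
\[
\tilde S_R^{1/2}u = U_N S_{NR}^{1/2}u + U_N X'\Gamma' u, \qquad u\in\domf[\tilde S],
\]
where $U_N:\overline{\ran(S_N)}\to\overline{\ran(\tilde S_R)}$ is an isometry (coming from $\tilde S_R[u,v]=S_N[u,v]$ on $\domf[S]$) and $X':\domf[\omega']\to\overline{\ran(S)}$ is a well-defined linear operator; existence of this factorization hinges on the fact that the real form-difference $\tilde S_R-S_{NR}$ vanishes on $\ker\Gamma'$. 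Using $\tilde S[u,v]=((I+iG_{\tilde S})\tilde S_R^{1/2}u,\tilde S_R^{1/2}v)$ with $\|G_{\tilde S}\|\le\tan\alpha$, substituting the factorization, and subtracting $S_N[u,v]$ yields an expression of the form $\omega'[\Gamma'u,\Gamma'v]+2(X'\Gamma'u,S_{NR}^{1/2}v)$; the closed sectorial form $\omega'$ on $\cH'$ is read off as what remains. The bound $\|X'e\|^2\le\delta^2\RE\omega'[e]$ with $\delta<1$ then falls out of the inequality $\|G_{\tilde S}\|\le\tan\alpha<\infty$ and the explicit expression of $\omega'$; computing $\RE\omega'[e]=\|X'e\|^2+\|(\text{something})\|^2$ gives a strict separation yielding $\delta<1$.

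\textbf{Bijectivity.} Injectivity: if two pairs $\langle\omega'_1,X'_1\rangle,\langle\omega'_2,X'_2\rangle$ yield the same form $\tilde S$, subtract and use that $\Gamma'$ is surjective and $S_{NR}^{1/2}\cP_F v$ ranges over $\ranf[S_F]$ to conclude $\omega'_1=\omega'_2$ and $X'_1=X'_2$. Surjectivity is exactly the direct direction. That the converse recovers the original pair follows by plugging the constructed $\omega',X'$ back into the formula and comparing with the defining factorization.

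\textbf{Main obstacle.} The technical heart is the factorization $\tilde S_R^{1/2}u=U_NS_{NR}^{1/2}u+U_NX'\Gamma'u$ in the converse direction: one must show such a decomposition exists canonically, the operator $X'$ is well-defined on $\Gamma'\domf[\tilde S]$, and its range lies in $\overline{\ran(S)}$. This generalizes \eqref{eq:SNR} (which handled $S_N$ via the distinguished isometry $U_F$) to an arbitrary $m$-sectorial extension, and the strict inequality $\delta<1$ requires carefully tracking where $G_{\tilde S}$ acts and why the real part of $\omega'$ strictly dominates $\|X'\cdot\|^2$.
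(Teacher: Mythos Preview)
The paper does not prove this theorem: it is quoted from \cite{ArlMaxSectExt,ArlAbsBounCond} and used as a black box in the proof of Theorem~\ref{thm:sect}. So there is nothing in the present paper to compare your argument against.

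That said, your outline is broadly the right shape, and the direct direction is essentially correct. A few comments on the converse direction, where the real content lies. Your factorization $\tilde S_R^{1/2}u=U_NS_{NR}^{1/2}u+U_NX'\Gamma'u$ needs more care: the isometry $U_N$ is initially defined only on the closure of $S_{NR}^{1/2}\domf[S]$, and you must check that it extends (or that the difference $\tilde S_R^{1/2}u-U_NS_{NR}^{1/2}u$ makes sense and depends only on $\Gamma'u$) for all $u\in\domf[\tilde S]$. More seriously, your argument for the strict bound $\delta<1$ is incomplete. Writing $\RE\omega'[e]=\|X'e\|^2+\|(\text{something})\|^2$ only yields $\|X'e\|^2\le\RE\omega'[e]$, i.e.\ $\delta\le 1$; to get $\delta<1$ uniformly you need an inequality of the form $\|X'e\|^2\le C\|(\text{something})\|^2$, and that does not follow from sectoriality of $\tilde S$ alone without further structure. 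In the original references this step uses the explicit expression of $\omega'$ in terms of $G_{\tilde S}$ and the bound $\|G_{\tilde S}\|\le\tan\alpha$, together with the specific way $X'$ enters. You correctly flag this as the main obstacle, but your sketch does not yet indicate how it is overcome.
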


Let $\{\cH,\Gamma\}$ be a boundary pair of the operator $S$ in the
sense of Definition~\ref{gam}.
Set
\begin{equation}\label{inprod}
\begin{aligned}
&\cH'=\cD_0\;(=\dom(\Omega_0)),\\
&(e,g)_{\cH'}=(e,g)_\cH+\RE\Omega_0[e,g]=(e,g)_\cH+(X_0e,X_0g),\\
&\Gamma'=\Gamma\uphar\domf[S_N]=\Gamma\uphar\left(\domf[S]\dotplus\gamma(i)\cD_0\right).
\end{aligned}
\end{equation}
Then $\cH'$ is a Hilbert space w.r.t. the inner product $(\cdot,\cdot)_{\cH'}$ and $\{\cH',\Gamma'\}$ is boundary pair of the operator $S$ in the sense of Definition~\ref{bp2}.
Note that
\begin{enumerate}
\item
the operators $X_0$ and $\gamma(\lambda)$ are continuous from $\cH'$
into $\sH$,
\item the sesquilinear form $\Omega_0$ is continuous in $\cH'$.
\end{enumerate}

 Further, using Theorem~\ref{thm:main} and
representation~\eqref{eq:SNform1} for the form $S_N[u,v]$, we are
going to established additional conditions on the pairs
$\angles{\bm{Z}, X}$ that determine $m$-sectorial extensions of the
operator $S$ in accordance with Theorem~\ref{thm:bijective}.

\begin{theorem}\label{thm:sect}
Let $\{\cH,\Gamma\}$ be a boundary pair of $S$. Then the pair
$\angles{\bm{Z},X}$ determines an $m$-sectorial extension $\eS$ of $S$, see Theorem~\ref{thm:bijective} and Remark~\ref{mnim}, if
and only if the following conditions are fulfilled:
\begin{enumerate}
\item $\dom(\bm{Z})\subseteq \cD_0$;
\item the sesquilinear form
\begin{multline}\label{formaom}
{\omega}[e,g]=(\bm{Z}e,g)_\cH-\Omega_0[e,g]-2((X-X_0)e,X_0g)\\
= (\bm{Z}e,g)_\cH+\Omega^*_0[e,g]-2(X e,X_0g),\\
 e,g\in\dom({\bm{Z}})=\Gamma\dom(\eS)
\end{multline}
is sectorial and admits a closure in the Hilbert space $\cH'$;
\item $\|(X-X_0)e\|^2\leqslant \delta^2\RE {\omega}[e]$,\; $e\in\dom(\bm{Z})$ for some $\delta\in[0,1)$.
\end{enumerate}
Moreover, the closed sesquilinear form associated with $\tilde S$ is
given by
\begin{equation}
\label{formes}
\begin{gathered}
\tilde{S}[u,v]=
l[u,v]+{\bm{Z}}[\Gamma u,\Gamma v]+2(\overline{X}\Gamma u,
S^{1/2}_{FR}\cP_F v),\\
u,v\in\domf[\eS]=\Gamma^{-1}\dom(\overline{\omega}),
\end{gathered}
\end{equation}
where $\overline{X}$ is continuous extension of $X$ on the domain
$\dom(\overline{\omega})$ of the closure $\overline{\omega}$ of
$\omega$ and
\begin{equation}
\label{formaz}
{\bm{Z}}[e,g]:=\overline{\omega}[e,g]-\Omega^*_0[e,g]+2(\overline{X}e,X_0
g),\; e,g,\in\dom(\overline{\omega}).
\end{equation}
\end{theorem}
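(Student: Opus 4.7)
The plan is to reduce this theorem to a comparison between the two available parametrizations of $m$-sectorial extensions: pairs $\langle\bm{Z},X\rangle$ from Theorem~\ref{thm:bijective} (which parametrizes all $m$-accretive extensions) and pairs $\langle\omega',X'\rangle$ from Theorem~\ref{thm:3.6.3} (which parametrizes closed sectorial forms attached to $m$-sectorial extensions). The bridge is the boundary pair $\{\cH',\Gamma'\}$ defined in~\eqref{inprod} with $\cH'=\cD_0$. The whole proof then becomes an algebraic identification of coefficients in two expansions of $\tilde S[u,v]$, plus a density/closure argument on the parameter side.

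For the necessity direction, assume $\tilde S$ is $m$-sectorial. By Theorem~\ref{thm:3.6.3} there is a pair $\langle\omega',X'\rangle$ such that $\tilde S[u,v]$ is given by~\eqref{opisform}. I would substitute the representation~\eqref{eq:SNform1} of $S_N[u,v]$ and the decomposition~\eqref{eq:SNR} of $S_{NR}^{1/2}v$ into~\eqref{opisform}; after collecting terms this yields
\[
\tilde S[u,v]=l[u,v]+(\Omega_0+\omega')[\Gamma u,\Gamma v]+2\bigl((X_0+U_F^*X')\Gamma u,S_{FR}^{1/2}\cP_F v\bigr)+2(U_F^*X'\Gamma u,X_0\Gamma v).
\]
Comparing with the representation from Theorem~\ref{thm:main} on $\dom(\tilde S)\times\sL$ and using the uniqueness asserted in Theorem~\ref{thm:bijective}, I would read off $X=X_0+U_F^*X'$ on $\dom(\bm{Z})=\Gamma\dom(\tilde S)$ and $\omega=\omega'\uphar\Gamma\dom(\tilde S)$, where the rearrangement uses $2(X_0e,X_0g)=\Omega_0[e,g]+\Omega_0^*[e,g]$ (which follows from~\eqref{ravomg} by polarization). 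Inclusion~(1) then holds because $\Gamma\dom(\tilde S)\subseteq\Gamma\domf[S_N]=\cD_0$ by~\eqref{eq:formseq} and Theorem~\ref{lem:QGlims}; the estimate~(3) follows from $\|X'e\|=\|(X-X_0)e\|$ (as $U_F$ is isometric) together with the bound for $X'$; and for~(2) one needs that $\Gamma\dom(\tilde S)$ is a core of $\omega'$ in $\cH'$, so $\omega$ is closable with closure $\omega'$, and in particular sectorial.

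For sufficiency, given a pair satisfying~(1)--(3), I would set $\omega':=\overline{\omega}$ (the closure from~(2)), use~(3) to extend $X-X_0$ by continuity to $\dom(\omega')$, and set $X':=U_F\overline{(X-X_0)}$. Then $\omega'$ is closed sectorial on $\cH'$ and $\|X'e\|^2\le\delta^2\RE\omega'[e]$, so Theorem~\ref{thm:3.6.3} produces a unique $m$-sectorial extension $\tilde S$ whose form is~\eqref{opisform}. Reading the same substitution in reverse, the restriction of this form to $\dom(\tilde S)\times\sL$ reproduces exactly the Theorem~\ref{thm:main} representation for the given $\langle\bm{Z},X\rangle$, so by the bijection of Theorem~\ref{thm:bijective} this $\tilde S$ is the $m$-accretive extension attached to $\langle\bm{Z},X\rangle$, and hence is $m$-sectorial. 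Formulas~\eqref{formes} and~\eqref{formaz} follow from the identification $\omega'=\overline{\omega}$, $X'=U_F\overline{(X-X_0)}$, the decomposition~\eqref{eq:SNR}, and the rewriting $\Omega_0[e,g]-2(X_0e,X_0g)=-\Omega_0^*[e,g]$. The main obstacle is the form-core assertion used in~(2): that $\Gamma\dom(\tilde S)$ is dense in $\dom(\omega')$ in the form norm $(\|\cdot\|_\cH^2+\RE\omega'[\cdot])^{1/2}$. This combines the standard fact (Kato) that $\dom(\tilde S)$ is a form core for the sectorial form of $\tilde S$ with continuity of $\Gamma$ from $\domf[S_N]$ (with its intrinsic norm) into $\cH'$, which in turn rests on the decomposition~\eqref{eq:DSN} and the definition~\eqref{inprod} of the $\cH'$ inner product; once this is handled, the remainder reduces to the coefficient-matching performed above.
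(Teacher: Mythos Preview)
Your proposal is correct and follows essentially the same approach as the paper. Both arguments hinge on the same algebraic bridge between the two parametrizations: substituting~\eqref{eq:SNform1} for $S_N[\cdot,\cdot]$ and the decomposition~\eqref{eq:SNR} for $S_{NR}^{1/2}v$, then identifying $\omega$ with the Theorem~\ref{thm:3.6.3} form and $\tilde X=U_F(X-X_0)$ with its operator component. The only cosmetic difference is that in the necessity direction the paper starts from the $\langle\bm{Z},X\rangle$ representation and computes towards the $S_N$-based one, whereas you start from the $\langle\omega',X'\rangle$ representation of Theorem~\ref{thm:3.6.3} and compute towards the $l$-based one; the identifications obtained are the same. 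Your sufficiency direction is in fact slightly more explicit than the paper's, which stops after invoking Theorem~\ref{thm:3.6.3} without spelling out that the resulting $m$-sectorial extension coincides with the one attached to $\langle\bm{Z},X\rangle$ by Theorem~\ref{thm:bijective}; your reverse-substitution argument closes that loop. The form-core point you flag (that $\Gamma\dom(\tilde S)$ is a core for $\omega'$ in $\cH'$) is exactly what the paper uses in the sentence ``$\Gamma\dom(\eS)$ is a core of the linear manifold $\Gamma\domf[\eS]$'' at the start of its necessity proof.
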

\begin{proof}
Let $\eS$ be an $m$-sectorial extension of $S$ determined by the
pair $\left<{\bm{Z}},X\right>$ in accordance with
Theorem~\ref{thm:main}. Note, that since $\eS$ is $m$-sectorial
extension of $S$, we have (see~\eqref{eq:formseq})
$\dom (\eS)\subset\domf[\eS]\subseteq\domf[S_N],$ and $\Gamma\dom(\eS)$
is a core of the linear manifold $\Gamma\domf[\eS]$.
Then
\[
(\eS u,v)=l[u,v]+(\bm{Z}\Gamma u,\Gamma v)_\cH+2(X\Gamma u,
S_{FR}^{1/2}\cP_F v), \; u,v\in\dom(\eS).
\]
Using~\eqref{eq:SNform1}, one obtains:
\begin{multline*}
(\eS u,v)=S_N[u,v] +(\bm{Z}\Gamma u,\Gamma v)_\cH-\Omega_0[\Gamma u,\Gamma v]\\
+2((X-X_0)\Gamma u,S_{FR}^{1/2}\cP_F v),\quad
u,v\in\dom(\eS).
\end{multline*}

From~\eqref{eq:SNR}
$
S_{FR}^{1/2}\cP_F v=U_F^*S_{NR}^{1/2}v-X_0\Gamma v.
$
Hence,
\begin{multline*}
(\eS u,v)=S_N[u,v]+ (\bm{Z}\Gamma u,\Gamma v)_\cH-\Omega_0[\Gamma u,\Gamma v]\\
-2((X-X_0)\Gamma u, X_0\Gamma v)+2(U_F(X-X_0)\Gamma
u,S_{NR}^{1/2}v)\\
=S_N[u,v]+ \omega[\Gamma u,\Gamma v]+2(U_F(X-X_0)\Gamma
u,S_{NR}^{1/2}v)\\
=S_N[u,v]+{\omega}[\Gamma' u, \Gamma'
v]+2(\tilde{X}\Gamma'u,S_{NR}^{1/2}v),\; u, v\in\dom (\eS),
\end{multline*}
where $\omega$ is given by~\eqref{formaom} and
$\tilde{X}=U_F(X-X_0)$.
 From Theorem
\ref{thm:3.6.3} it follows that $\omega$ is sectorial form,
$\dom(\omega)=\dom(\bm{Z})\subseteq \cD_0=\cH'$ and
$$||\tilde X e||^2=||(X-X_0)e||^2\le
\delta^2\RE\omega[e]$$ for all $e\in\dom(\bm{Z})$, where
$\delta\in[0,1)$. Moreover, the form $\omega$ admits closure
$\overline{\omega}$ in the Hilbert space $\cH'$, and $\tilde X$ can
be extended on $\dom\left(\overline{\omega}\right)$ by continuity as a linear
operator from $\dom\left(\overline{\omega}\right)$ with the inner product
\[
(e,g)_{\overline{\omega}}=(e,g)_{\cH'}+\RE\overline{\omega}[e,g].
\]
Since $X_0$ is continuous from $\cH'$ into $\sH$, the operator $X$
admits a continuation $\overline{X}$ on $\dom\left(\overline{\omega}\right)$. It
follows that the form ${\bm{Z}}$ given by~\eqref{formaz} is well
defined and the closed form $\eS[u,v]$ associated with $\eS$ is of
the form~\eqref{formes}.

Conversely, let conditions (1)--(3) of the theorem be fulfilled.
Denote by $\overline{\omega}$ the closure in the Hilbert space
$\cH'$ of the sesquilinear form $\omega$ given by
~\eqref{formaom}, and by $\overline{X'}$ the continuation of
the operator $\tilde{X}=U_F(X-X_0)$ on $\dom(\overline{\omega})$,
which exists due condition~(2). Then, by Theorem~\ref{thm:3.6.3},
the pair $\angles{\overline{\omega},\overline{X'}}$ determines
by~\eqref{opisform} a closed sectorial form $\eS[u,v]$ associated
with some $m$-sectorial
extension $\eS$ of the operator $S$.
\end{proof}

\begin{remark}
We can rewrite condition (3) of Theorem~\ref{thm:sect} in slightly
different form. Let us find the real part of the form
${\omega}[e,e]$. We have:
\[
{\omega}[e,e]=(\bm{Z}e,e)_\cH-\Omega_0[e,e]-2((X-X_0)e,X_0e).
\]
Using~\eqref{ravomg}, we obtain:
\begin{multline*}
\RE{\omega}[e,e]=\RE(\bm{Z}e,e)_\cH-\|X_0e\|^2+2\|X_0e\|^2-2\RE(Xe,X_0e)=\\
=\RE(\bm{Z}e,e)_\cH+\|X_0e\|^2-2\RE(Xe,X_0e)=\\
=\RE(\bm{Z}e,e)_\cH+\|(X-X_0)e\|^2-\|Xe\|^2.
\end{multline*}
Then the inequalities \[
\|(X-X_0)e\|^2\leqslant\delta^2\RE\omega[e]=\delta^2\left(\RE(\bm{Z}e,e)_\cH+\|(X-X_0)e\|^2-\|Xe\|^2\right)
\]
and $0\le \delta<1$ imply
\[
M\|(X-X_0)e\|^2\leqslant\RE(\bm{Z}e,e)_\cH-\|Xe\|^2,
\]
where $M=\cfrac{1-\delta^2}{\delta^2}>0$.

Thus, condition 3 can be rewritten as
\[\RE (\bm{Z}e,e)_\cH-\|Xe\|^2\geqslant M\|(X-X_0)e\|^2,\quad M>0.\]
\end{remark}

\section{Nonnegative symmetric operator and its quasi-selfadjoint $m$-accretive extensions}
In this section we will consider a densely defined closed
nonnegative symmetric operator $A$ and parameterize all its
quasi-selfadjoint $m$-accretive extensions in terms of abstract
boundary conditions. We will use a boundary pair and boundary
triplets defined in Definitions~\ref{gam},~\ref{boundtip1}, and
\ref{boundtip2}. In this case if $\{\cH,\Gamma\}$ is the boundary
pair for $A$ in the sense of Definition~\ref{gam}, then the
sesquilinear form $\Omega_0$ and the linear operator $X_0$ defined
on the linear manifold $\cD_0=\Gamma\domf[A_N]$ (see Theorem
\ref{lem:QGlims}) are of the form
\[
\begin{aligned}
\Omega_0[e,g]&=i\left(\gamma(i)e,\gamma(i)g\right)+\left(\hat
A_{F}^{-1/2}\gamma(i)e, \hat A_{F}^{-1/2}\gamma(i)g\right)
\\
X_0e&=i\hat A_{F}^{-1/2}\gamma(i)e,\; e,g\in\cD_0.
\end{aligned}
\]
In addition, from~\eqref{expsnuv} it follows that
\begin{multline}
\label{expsnuvsym} A_N[u,v]=\Biggl(\left(A^{1/2}_{F}\cP_{z,
F}u+z\hat A^{-1/2}_{F}\cP_zu\right), \left(A^{1/2}_{F}\cP_{z,
F}v+z\hat A^{-1/2}_{F}\cP_zv\right)\Biggr)\\
= \Biggl(\left(A^{1/2}_{F}(u-\gamma(z)\Gamma u)+z\hat
A^{-1/2}_{F}\gamma(z)\Gamma u\right),
\left(A^{1/2}_{F}(v-\gamma(z)\Gamma v)+z\hat
A^{-1/2}_{F}\gamma(z)\Gamma v\right)\Biggr),\\
u,v\in\domf[A_N]=\domf[A_F]\dot+(\sN_z\cap\ran(A^{1/2}_F))=\domf[A_F]\dot+\gamma(z)\cD_0.
\end{multline}
It is established in~\cite{Ar1} (see also~\cite{CAOT2012}) that the
following assertions are equivalent for $m$-accretive extension
$\eA$ of $A$:
\begin{enumerate}
\def\labelenumi{\rm (\roman{enumi})}
\item $A$ is quasi-selfadjoint extension;
\item $\dom(\eA)\subseteq \domf[A_N]$ and $\RE(\eA f,f)\ge A_N[f]$
for all $f\in\dom(\eA)$.
\end{enumerate}
Observe that the operator $L$ defined in~\eqref{OPERL} is of the
form
$$\dom(L)=\dom(A^*),\;Lu=A^*u-2iu_i,$$
where $u=u_F+u_i$, $u_F\in\dom(A_F),$ $u_i\in\sN_i$. If
$\{\cH,\Gamma\}$ is a boundary pair for $A$ (see Definition
\ref{gam}), then
\[
Lu=A^*u-2i\gamma(i)\Gamma u,\; u\in\dom(A^*).
\]

\begin{proposition}
Let $A$ be a closed densely defined nonnegative symmetric operator
in $\sH$ and let $\{\cH,\Gamma\}$ be its boundary pair (in the sense
of Definition~\ref{gam}). Assume $\cD_0\ne\{0\}$. Then a pair $\angles{\bm{Z},X}$ determines a
quasi-selfadjoint $m$-accretive extension $\eA$ of $A$ in
accordance with Theorem~\ref{thm:bijective} if and only if the
following conditions hold true
\begin{enumerate}
\item $\dom({\bm{Z}})\subseteq\cD_0$,
\item
$X=X_0\uphar\dom({\bm{Z}})=i\hat{A}_{F}^{-1/2}\gamma(i)\uphar\dom({\bm{Z}}).$
\end{enumerate}
\end{proposition}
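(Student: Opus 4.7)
The plan is to use the characterization of quasi-selfadjointness as $\eA\subseteq A^*$ (equivalently $\dom(\eA)\subseteq\domf[A_N]$ and $\RE(\eA f,f)\ge A_N[f]$, cited from~\cite{Ar1,CAOT2012}), combined with the parametrization from Theorem~\ref{thm:main}. In the symmetric case several simplifications occur: $A_{FR}=A_F$, $G_F=0$, the sum $\dom(A^*)=\dom(A_F)\dotplus\sN_i$ is direct (which is exactly why $\dom(L)$ in~\eqref{OPERL} coincides with $\dom(A^*)$), and $X_0=i\hat A_F^{-1/2}\gamma(i)$. The whole argument hinges on one computational identity: for $e\in\cD_0$ (so $\gamma(i)e\in\ranf[A_F]$) and $v\in\domf[A]=\dom(A_F^{1/2})$,
\[
(X_0 e,A_F^{1/2}v)=i(\hat A_F^{-1/2}\gamma(i)e,A_F^{1/2}v)=i(A_F^{1/2}\hat A_F^{-1/2}\gamma(i)e,v)=i(\gamma(i)e,v),
\]
where I use that $\hat A_F^{-1/2}\gamma(i)e\in\dom(\hat A_F^{1/2})\subset\dom(A_F^{1/2})$ and $A_F^{1/2}\hat A_F^{-1/2}$ is the identity on $\ran(A_F^{1/2})$ because $\ker A_F^{1/2}=\ker A_F$ and $\gamma(i)e\perp\ker A_F$.

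For the forward direction, I start with $\eA$ quasi-selfadjoint, so $\dom(\eA)\subseteq\dom(A^*)\subseteq\domf[A_N]$. For $u\in\dom(\eA)$ the decomposition $u=\cP_F u+\gamma(i)\Gamma u$ places $\gamma(i)\Gamma u$ in $\sN_i\cap\domf[A_N]=\sN_i\cap\ranf[A_F]$ via~\eqref{raven11}, and Theorem~\ref{lem:QGlims} identifies this set with $\gamma(i)\cD_0$; hence $\Gamma u\in\cD_0$, giving (1). For (2), I test the equality $(\eA u,v)=(A^*u,v)$ against $v\in\domf[A]$ (so $\Gamma v=0$, $\cP_F v=v$). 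Using the formula~\eqref{eq:ldef} for $l$ together with $A^*u=A_F\cP_F u+i\gamma(i)\Gamma u$, one computes $(A^*u,v)-l[u,v]=2i(\gamma(i)\Gamma u,v)$, while Theorem~\ref{thm:main} gives $(\eA u,v)-l[u,v]=2(X\Gamma u,A_F^{1/2}v)$. Applying the key identity yields $((X-X_0)\Gamma u,A_F^{1/2}v)=0$ for all $v\in\domf[A]$. Since $\ran(A_F^{1/2})$ is dense in $\overline{\ran(A_F)}$ and both $X\Gamma u$ and $X_0\Gamma u$ lie in $\overline{\ran(A_F)}$, I conclude $X\Gamma u=X_0\Gamma u$, proving (2).

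For the converse, assume (1) and (2) and take $u\in\dom(\eA)$, $\varphi\in\dom(A)\subset\ker\Gamma$. Theorem~\ref{thm:main} and~\eqref{eq:ldef} give
\[
(\eA u,\varphi)=l[u,\varphi]+2(X_0\Gamma u,A_F^{1/2}\varphi)=(\cP_F u,A\varphi)-i(\gamma(i)\Gamma u,\varphi)+2i(\gamma(i)\Gamma u,\varphi),
\]
using the key identity on the last term. Since $\gamma(i)\Gamma u\in\sN_i=\ker(A^*-iI)$, one has $(\gamma(i)\Gamma u,A\varphi)=(A^*\gamma(i)\Gamma u,\varphi)=i(\gamma(i)\Gamma u,\varphi)$, so the right-hand side equals $(\cP_F u,A\varphi)+(\gamma(i)\Gamma u,A\varphi)=(u,A\varphi)$. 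Thus the functional $\varphi\mapsto(u,A\varphi)$ on $\dom(A)$ is bounded in $\sH$-norm (by $\|\eA u\|$), which by definition of $A^*$ places $u\in\dom(A^*)$ with $A^*u=\eA u$. Hence $\eA\subseteq A^*$, i.e.\ $\eA$ is quasi-selfadjoint.

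The main obstacle is the clean identification in the key identity, which requires tracking that $X_0 e$ actually lies in $\dom(A_F^{1/2})$ rather than only in $\overline{\ran(A_F)}$, and that the canonical cancellation $A_F^{1/2}\hat A_F^{-1/2}=I$ holds on $\gamma(i)e$; both points reduce to membership of $\gamma(i)e$ in $\ran(A_F^{1/2})$, which is precisely the definition of $\cD_0$ in Theorem~\ref{lem:QGlims}. Everything else in the argument is a direct bookkeeping of the formulas~\eqref{eq:ldef} and Theorem~\ref{thm:main}, specialized to $G_F=0$.
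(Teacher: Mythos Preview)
Your argument is essentially correct, but there is one slip: the chain $\dom(\eA)\subseteq\dom(A^*)\subseteq\domf[A_N]$ in the forward direction is not valid in general, because $\dom(A^*)\subseteq\domf[A_N]$ is precisely the extra hypothesis~\eqref{cond5.1} that this paper is set up to avoid. You do not need it: the characterization you already cite from~\cite{Ar1,CAOT2012} gives $\dom(\eA)\subseteq\domf[A_N]$ directly for any quasi-selfadjoint $m$-accretive extension, and then $\Gamma u\in\cD_0$ follows as you wrote. With this fix, both directions go through.

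Your route differs from the paper's in both halves. For the forward direction the paper does not test the form identity against $v\in\domf[A]$; instead it invokes the explicit construction of $X$ from the proof of Theorem~\ref{thm:main} (the operator $M$ in~\eqref{eq:Zdef}) and computes $M u$ directly using $\eA u=A^*u=A_Fu_F+iu_i$, arriving at $X\Gamma u=i\hat A_F^{-1/2}\gamma(i)\Gamma u$ in three lines. For the converse, the paper takes a structural approach: it first proves the operator identity
\[
q(\lambda)-2\Phi(\lambda)X_0=\gamma(\lambda),
\]
and then reads off from the boundary conditions~\eqref{eq:LBoundS_F} that $u-\gamma(\lambda)\Gamma u\in\dom(A_F)$, so $u\in\dom(A_F)\dotplus\sN_\lambda=\dom(A^*)$, and that the action formula collapses to $A^*u$. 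Your converse, by contrast, is variational: you show $(\eA u,\varphi)=(u,A\varphi)$ for all $\varphi\in\dom(A)$ via the form representation and your key identity, and conclude $\eA\subseteq A^*$ from the definition of the adjoint. Your method is more self-contained (it uses only the statement of Theorem~\ref{thm:main}, not its internal formula~\eqref{eq:Zdef}), while the paper's method yields the identity $q(\lambda)-2\Phi(\lambda)X_0=\gamma(\lambda)$, which is reused immediately afterwards in the proof of Theorem~\ref{thm:properbijective} and in the resolvent formula~\eqref{eq:ssp}.
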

\begin{proof}
Let $\eA$ be a quasi-selfadjoint $m$-accretive extension of the
operator $A$. Then $\dom(\eA)\subseteq\domf[A_N]$. By Theorem
\ref{thm:main} this implies the inclusion
$\dom({\bm{Z}})\subseteq\Gamma\domf[A_N]=\cD_0$. Taking into account
the decomposition $\dom(A^*)=\dom(A_F)\dotplus\sN_i$,
from~\eqref{eq:Zdef} for $\dom(\eA)\ni u=u_F+u_i$,
$u_F\in\dom(A_F)$, $u_i\in\sN_i$ we have
\begin{multline*}
X\Gamma u=Mu=\frac{1}{2}\left(\hat A_{FR}^{-1/2}(\eA u+i\cP_iu)-(I+iG_F)A_{FR}^{1/2}\cP_Fu\right)=\\
=\frac{1}{2}\left(\hat{A}_{F}^{-1/2}(A^*u+iu_i)-A_{F}^{1/2}u_F\right)=\frac{1}{2}\left(\hat{A}_{F}^{-1/2}(A_Fu_F+2iu_i)-A_{F}^{1/2}u_F\right)=\\
=i\hat{A}_{F}^{-1/2}\gamma(i)\Gamma u=X_0\Gamma u.
\end{multline*}

Now consider a pair $\angles{\bm{Z},X}$, where $\bm{Z}$ is
$m$-accretive linear relation in $\cH$ such that ({\bf{a}})
$\dom(\bm{Z})\subseteq\cD_0$ and ({\bf{b}}) $\RE(\bm{Z}e,e)_\cH\ge
||X_0e||^2$ for all $e\in\dom(\bm{Z})$. This pair determines an
$m$-accretive extension $\tilde A$. Let us prove that $\eA\subseteq
A^*$. Note that for all $u\in\sL$, $v\in\sH$
\begin{multline*}
\left(\Phi(\lambda)X_0\Gamma u, v\right)=i\left(\hat{A}_{F}^{-1/2}\gamma(i)\Gamma u, A_{F}^{1/2}(A_F-\bar{\lambda}I)^{-1}v\right)=\\
=i\left((A_F-\lambda I)^{-1}\gamma(i)\Gamma u, v\right).
\end{multline*}
So,
\begin{equation}\label{fxo}
\Phi(\lambda)X_0\Gamma u=i(A_F-\lambda I)^{-1}\gamma(i)\Gamma u\subset \dom(A_F).
\end{equation}
Using~\eqref{fxo} one gets
\begin{multline} \label{qg}
q(\lambda)-2\Phi(\lambda)X_0=\\
=\gamma(i)+(\lambda+i)(A_F-\lambda I)^{-1}\gamma(i)-2i(A_F-\lambda I)^{-1}\gamma(i)=\\
=\gamma(i)+(\lambda-i)(A_F-\lambda I)^{-1}\gamma(i)=\gamma(\lambda).
\end{multline}

From boundary conditions~\eqref{eq:LBoundS_F} for $u\in\sL$ we have:
\begin{multline*}
u\in\dom(\eA)\Rightarrow u-(q(\lambda)-2\Phi(\lambda)X_0)\Gamma u\in\dom(A_F)\\
\Rightarrow u-\gamma(\lambda)\Gamma u\in\dom(A_F),
\end{multline*}
and, therefore, $u\in\dom(A_F)\dotplus\sN_\lambda=\dom(A^*)$. Further, for $u=\cP_{\lambda,F}u+\cP_\lambda u$
\begin{multline*}
\eA u=A_F\bigl(u-(q(\lambda)-2\Phi(\lambda)X_0)\Gamma u\bigr)+\lambda\bigl(q(\lambda)-2\Phi(\lambda)X_0\bigr)\Gamma u=\\
=A_F\bigl(u-\gamma(\lambda)\Gamma u\bigr)+\lambda\gamma(\lambda)\Gamma u=\\
=A_F\cP_{\lambda,F}u+\lambda\cP_\lambda u=A^*(\cP_{\lambda,F}u+\cP_\lambda u).
\end{multline*}
So, $\eA\subseteq A^*$.
\end{proof}

\begin{theorem}\label{thm:properbijective}
Let $\{\cH,\Gamma\}$ and $\{\cH,G_*,\Gamma\}$ be a boundary pair for $A$
and the corresponding boundary triplet for $L$, see Definition
\ref{boundtip2}. Assume $\cD_0\ne\{0\}$. Then there is a bijective correspondence between all
$m$-accretive quasi-selfadjoint extensions $\eA$ of $A$ and all
$m$-accretive linear relations $\bm{Z}$ in $\cH$ such that
$\dom(\bm{Z})\subseteq\cD_0$ and:
\[\RE (\bm{Z}e,e)\geqslant \|\hat{A}_{F}^{-1/2}\gamma(i)e\|^2,\quad\forall e\in\dom(\bm{Z}).\]
This correspondence is given by 
\begin{equation}\label{eq:sLBoundS_F}
\begin{aligned}
\dom(\eA)&=\left\{u\in\dom(A^*): G_*u\in \bigl(\bm{Z}-2i\gamma^*(i)\gamma(i)\bigr)\Gamma u\right\},\\
\eA u&=A^*u.
\end{aligned}
\end{equation}
Moreover,
\begin{enumerate}
\item a number $\lambda\in\rho(A_F)$ is a regular point of $\eA$ if and only if
\[
\left(\bm{Z}-\frac{\lambda+i}{\lambda-i}\cQ(\lambda)\right)^{-1}\in\bL(\cH),
\]
and,
\begin{equation}
(\eA-\lambda I)^{-1}=(A_F-\lambda I)^{-1}+\gamma(\lambda)\left(\bm{Z}-\frac{\lambda+i}{\lambda-i}\cQ(\lambda)\right)^{-1}\gamma^*(\bar{\lambda});\label{eq:ssp}
\end{equation}

\item a number $\lambda\in\rho(A_F)$ is an eigenvalue of $\eA$ if and only if
\[
\ker\left(\bm{Z}-\frac{\lambda+i}{\lambda-i}\cQ(\lambda)\right)\neq\{0\},
\]
and,
\[
\ker(\eA-\lambda I)=\gamma(\lambda)\ker\left(\bm{Z}-\frac{\lambda+i}{\lambda-i}\cQ(\lambda)\right).
\]
\end{enumerate}
\end{theorem}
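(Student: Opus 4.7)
My plan is to derive everything from the preceding Proposition together with Theorem~\ref{thm:bijective}. The preceding Proposition shows that quasi-selfadjoint $m$-accretive extensions are exactly those pairs $\angles{\bm{Z},X}$ from Theorem~\ref{thm:bijective} for which $\dom(\bm{Z})\subseteq\cD_0$ and $X=X_0\uphar\dom(\bm{Z})$, where $X_0 e = i\hat{A}_F^{-1/2}\gamma(i)e$. Substituting this into the admissibility condition~\eqref{eq:XeZe} gives exactly $\RE(\bm{Z}e,e)_\cH\ge\|\hat{A}_F^{-1/2}\gamma(i)e\|^2$, yielding the bijective correspondence in the statement.

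Next I will specialize the boundary conditions~\eqref{eq:LBoundS_F} with $X=X_0$ to recover~\eqref{eq:sLBoundS_F}. Condition~(1) becomes $u-\gamma(\lambda)\Gamma u\in\dom(A_F)$ directly from the identity $q(\lambda)-2\Phi(\lambda)X_0=\gamma(\lambda)$ proved in~\eqref{qg}, which is the same as $u\in\dom(A_F)\dotplus\sN_\lambda=\dom(A^*)$. For condition~(2) I compute $G_*(2\Phi(\lambda)X_0\Gamma u)=2i\gamma^*(\bar\lambda)\gamma(i)\Gamma u$ using~\eqref{fxo} and the first line of~\eqref{opg*}, and expand $\gamma^*(\bar\lambda)$ via the adjoint of~\eqref{eq:gammalz1} together with $A_F^*=A_F$. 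For the right-hand side I compute $\cG(\lambda)X_0\Gamma u=i(\lambda+i)\gamma^*(i)(A_F-\lambda I)^{-1}\gamma(i)\Gamma u$, which is obtained from the formula $\cP_F\gamma(\bar\lambda)=(\bar\lambda-i)(A_F-\bar\lambda I)^{-1}\gamma(i)$ recorded after~\eqref{eq:gammalz1}. The key auxiliary identity, which follows by combining~\eqref{QFU} with~\eqref{eq:gammalz1}, is
\[
\gamma^*(i)(A_F-\lambda I)^{-1}\gamma(i)=\frac{\cQ(\lambda)-(\lambda-i)\gamma^*(i)\gamma(i)}{(\lambda-i)^2}.
\]
Plugging this into both expressions, the $\gamma^*(i)\gamma(i)$ terms cancel out and the net effect is that the inclusion $G_*(u+2\Phi(\lambda)X_0\Gamma u)\in(\bm{Z}+2\cG(\lambda)X_0)\Gamma u$ collapses to $G_*u\in(\bm{Z}-2i\gamma^*(i)\gamma(i))\Gamma u$, which is the $\lambda$-independent boundary condition stated in~\eqref{eq:sLBoundS_F}.

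For parts (1)--(2), the same algebra applied to $\bm{W}(\lambda)$ defined by~\eqref{eq:lrW} gives
\[
-\cQ^*(\bar\lambda)+2\cG(\lambda)X_0=-\frac{\lambda+i}{\lambda-i}\cQ(\lambda),
\]
so that $\bm{W}(\lambda)=\bm{Z}-\frac{\lambda+i}{\lambda-i}\cQ(\lambda)$. The resolvent formula~\eqref{eq:ssp} and the eigenvalue/kernel statement then follow directly from Theorem~\ref{thm:bijective} together with $q(\lambda)-2\Phi(\lambda)X_0=\gamma(\lambda)$. I expect the main obstacle to be precisely the bookkeeping that yields the clean multiplier $\frac{\lambda+i}{\lambda-i}$: one has to carry $\cQ^*(\bar\lambda)$ and $\cG(\lambda)X_0$ through the same auxiliary identity, track that the $\gamma^*(i)\gamma(i)$ contributions cancel, and use the self-adjointness of $A_F$ at every step ($A_{FR}=A_F$, $G_F=0$, $A_F^*=A_F$) to reduce all objects to explicit expressions in $\gamma(i)$, $\gamma^*(i)$, and $(A_F-\lambda I)^{-1}$ before the final $-(\lambda+i)+2i=-(\lambda-i)$ simplification produces the announced factor.
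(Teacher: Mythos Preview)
Your proposal is correct and follows essentially the same route as the paper: specialize Theorem~\ref{thm:bijective} with $X=X_0$ via the preceding Proposition, use~\eqref{qg} and~\eqref{fxo} together with~\eqref{opg*}, and then expand $\gamma^*(\bar\lambda)$, $\cQ^*(\bar\lambda)$, $\cG(\lambda)X_0$ in terms of $\gamma^*(i)$, $\gamma(i)$, $(A_F-\lambda I)^{-1}$ (using $A_F^*=A_F$) to obtain both the $\lambda$-free boundary condition and $\bm{W}(\lambda)=\bm{Z}-\frac{\lambda+i}{\lambda-i}\cQ(\lambda)$. One small wording slip: in the boundary-condition step it is the $\cQ(\lambda)$ (equivalently $(A_F-\lambda I)^{-1}$) contributions that cancel between the two sides, leaving precisely the surviving $-2i\gamma^*(i)\gamma(i)$ term, not the other way around; your stated conclusion is nevertheless correct.
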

\begin{proof}
We will use~\eqref{eq:LBoundS_F}. Due to~\eqref{qg} the boundary
condition 1) in~\eqref{eq:LBoundS_F} is fulfilled. Let us transform
boundary condition 2). Due to~\eqref{opg*} we have for $\lambda\in
\rho(A_F)$
\[G_*(f + q(\lambda)e) =
\gamma^*(\bar\lambda)(A_F - \lambda I)f + \cQ^*(\bar\lambda)e,\;
f\in\dom(A_F),\;f\in\dom(A_F).
\]
So, we have
\begin{multline*}
G_*(u+2\Phi(\lambda)X\Gamma u)=G_*(u+(q(\lambda)-\gamma(\lambda))\Gamma u)=\\
=G_*(u+2i(A_F-\lambda I)^{-1}\gamma(i)\Gamma u)=\\
=G_*u+2i\gamma^*(\bar\lambda)\gamma(i)\Gamma u=\\
=\gamma^*(\bar\lambda)(A_F-\lambda I)\cP_{\lambda,F}u+\cQ^*(\bar\lambda)\Gamma u.
\end{multline*}

On the other hand,
\begin{multline*}
\bm{W}(\lambda)=\bm{Z}-\cQ^*(\bar{\lambda})+2\cG(\lambda)X_0\\
=\bm{Z}-(\lambda+i)\gamma^*(\bar{\lambda})\gamma(i)+2(\lambda+i)\gamma^*(i)\Phi(\lambda)X_0\\
=\bm{Z}-(\lambda+i)\bigl(\gamma^*(i)+(\lambda+i)\gamma^*(i)(A_F-\lambda I)^{-1}\bigr)\gamma(i)-2(\lambda+i)i\gamma^*(i)(A_F-\lambda I)^{-1}\gamma(i)\\
=\bm{Z}-(\lambda+i)\gamma^*(i)\bigl(I+(\lambda+i)(A_F-\lambda I)^{-1}-2i(A_F-\lambda I)^{-1}\bigr)\gamma(i)\\
=\bm{Z}-(\lambda+i)\gamma^*(i)\gamma(\lambda)=\bm{Z}-\frac{\lambda+i}{\lambda-i}\cQ(\lambda).
\end{multline*}
Then
\[
\bm{Z}+2\cG(\lambda)X_0=\bm{Z}+\cQ^*(\bar{\lambda})-\frac{\lambda+i}{\lambda-i}\cQ(\lambda).
\]
So, for the boundary condition~2) from from~\eqref{eq:LBoundS_F} one has
\begin{align*}
G_*u+2i\gamma^*(\bar\lambda)\gamma(i)\Gamma u&\in \left(\bm{Z}+\cQ^*(\bar{\lambda})-\frac{\lambda+i}{\lambda-i}\cQ(\lambda)\right)\Gamma u\\ \Longleftrightarrow G_*u&\in\left(\bm{Z}+\cQ^*(\bar{\lambda})-\frac{\lambda+i}{\lambda-i}\cQ(\lambda)-2i\gamma^*(\bar\lambda)\gamma(i)\right)\Gamma u\\
\Longleftrightarrow G_*u&\in\left(\bm{Z}+(\lambda+i)\gamma^*(\bar\lambda)\gamma(i)-\frac{\lambda+i}{\lambda-i}\cQ(\lambda)-2i\gamma^*(\bar\lambda)\gamma(i)\right)\Gamma u\\
\Longleftrightarrow G_*u&\in\left(\bm{Z}+\frac{\lambda-i}{\lambda+i}\cQ^*(\bar\lambda)-\frac{\lambda+i}{\lambda-i}\cQ(\lambda)\right)
\Gamma u.
\end{align*}
Further, using that $\cQ(\lambda)=(\lambda-i)\gamma^*(i)\gamma(i)$, we get
\begin{align*}
\left(\bm{Z}\right.&+\left.(\lambda-i)\gamma^*(\bar\lambda)\gamma(i)-(\lambda+i)\gamma^*(i)\gamma(\lambda)\right)\Gamma u\\
&=\left(\bm{Z}+(\lambda-i)\bigl(\gamma^*(i)+(\lambda+i)\gamma^*(i)(A_F-\lambda I)^{-1}\bigr)\gamma(i)\right.\\
&\quad-\left.(\lambda+i)\gamma^*(i)\bigl(\gamma(i)+(\lambda-i)(A_F-\lambda I)^{-1}\gamma(i)\bigr)\right)\Gamma u\\
&=\Bigl(\bm{Z}+\gamma^*(i)\bigl((\lambda-i)I+(\lambda^2+1)(A_F-\lambda I)^{-1}\bigr)\\
&\quad-\bigl((\lambda+i)I+(\lambda^2+1)(A_F-\lambda I)^{-1}\bigr)\Bigr)\gamma(i)\Gamma u\\
&=\bigl(\bm{Z}-2i\gamma^*(i)\gamma(i)\bigr)\Gamma u.\qedhere
\end{align*}
\end{proof}

\begin{remark} The boundary condition~\eqref{eq:sLBoundS_F} also can be written
for any $\lambda\in\rho(\eA)\cap\rho(A_F)$ as
\[
\dom(\eA)=\left\{u\in\dom(A^*): \gamma^*(\bar\lambda)(A_F-\lambda
I)(u-\gamma(\lambda)\Gamma u)\in\left(\bm{Z}-\frac{\lambda+i}{\lambda-i}\cQ(\lambda)\right)\Gamma u\right\},
\]
and
\[
\eA u=A^*u=A_F(u-\gamma(\lambda)\Gamma
u)+\lambda\gamma(\lambda)\Gamma u.
\]
\end{remark}

From Theorems~\ref{thm:sect},~\ref{thm:properbijective} we obtain
\begin{corollary}
Let $\bm{Z}$ be $m$-accretive linear relation, corresponding to a
quasi-selfadjoint $m$-accretive extension $\eA$ of $A$ by the
Theorem~\ref{thm:properbijective}. Then extension $\eA$ is a
sectorial (nonnegative) if and only if
\begin{enumerate}
\item $\dom(\bm{Z})\subseteq\cH'(= \dom(\Omega_0)=\cD_0)$;
\item the form $\tilde{\omega}[e,g]=(\bm{Z}e,g)_\cH-\Omega_0[e,g]$ is sectorial
(nonnegative).
\end{enumerate}
\end{corollary}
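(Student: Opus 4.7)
The corollary is a specialisation of Theorem~\ref{thm:sect} to the quasi-selfadjoint setting. By the Proposition immediately preceding Theorem~\ref{thm:properbijective}, such an extension $\eA$ is parametrised by a pair $\langle\bm{Z},X_0\uphar\dom(\bm{Z})\rangle$. I would split the proof into two implications, treating the ``only if'' direction by invoking Theorem~\ref{thm:sect} and the ``if'' direction via a direct verification that avoids the closability clause of that theorem.

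\textbf{``Only if'' direction.} Suppose $\eA$ is $m$-sectorial. Substituting $X=X_0$ into Theorem~\ref{thm:sect}, the cross term $2((X-X_0)e,X_0g)$ in~\eqref{formaom} vanishes and $\omega$ becomes $\tilde\omega$; condition (3) reduces to $0\le\delta^2\RE\tilde\omega[e]$, which holds since~\eqref{eq:XeZe} with $X=X_0$ forces $\RE\tilde\omega\ge 0$; condition (1) of Theorem~\ref{thm:sect} matches condition (1) of the corollary; and from condition (2) we extract sectoriality of $\tilde\omega$, keeping only what the corollary asserts.

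\textbf{``If'' direction and the expected main obstacle.} Conversely, assume (1) and (2). The main step, which I expect to be the substantive part of the proof, is to establish the form identity
\[
(\eA u,u)=A_N[u,u]+\tilde\omega[\Gamma u,\Gamma u],\qquad u\in\dom(\eA).
\]
I would derive this by subtracting the representation $S_N[u,v]=l[u,v]+\Omega_0[\Gamma u,\Gamma v]+2(X_0\Gamma u,A_F^{1/2}\cP_F v)$ from Theorem~\ref{thm:SNdef} out of the action formula of Theorem~\ref{thm:main} with $X=X_0$, and invoking the equivalence recalled before Theorem~\ref{thm:properbijective} to guarantee $\dom(\eA)\subseteq\domf[A_N]$ so that $A_N[u,u]$ is defined. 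Once this identity is available, sectoriality of $\eA$ drops out: $A_N[u,u]\ge 0$ is real, so with $\tilde\omega$ being $\alpha$-sectorial by (2),
\[
|\IM(\eA u,u)|=|\IM\tilde\omega[\Gamma u]|\le\tan\alpha\,\RE\tilde\omega[\Gamma u]\le\tan\alpha\,\RE(\eA u,u),
\]
and since $\eA$ is already $m$-accretive, it is $m$-$\alpha$-sectorial.

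\textbf{Nonnegative version.} The bracketed nonnegative case is the $\alpha=0$ specialisation: $\tilde\omega$ sectorial with semi-angle zero means $\tilde\omega$ Hermitian and nonnegative, while a symmetric $m$-accretive extension is automatically self-adjoint and nonnegative. The same argument, restricted to $\alpha=0$, yields the equivalence in this case too.
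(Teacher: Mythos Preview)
Your argument is correct for the sectorial case. The paper's own proof is terser: it simply cites Theorems~\ref{thm:sect} and~\ref{thm:properbijective}, meaning both implications are meant to be read off from Theorem~\ref{thm:sect} after specialising to $X=X_0\uphar\dom(\bm{Z})$. In that reading the ``if'' direction requires condition~(2) of Theorem~\ref{thm:sect} in full, including closability of $\tilde\omega$ in $\cH'$; this is precisely what Remark~\ref{closure}, placed immediately after the corollary, supplies. Your alternative route for the ``if'' direction via the identity $(\eA u,u)=A_N[u,u]+\tilde\omega[\Gamma u,\Gamma u]$ is a legitimate shortcut that sidesteps closability entirely; it is more elementary, though specific to the quasi-selfadjoint situation where the cross term $2((X-X_0)e,X_0g)$ vanishes.

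There is one small gap in the nonnegative ``only if'' direction. Invoking Theorem~\ref{thm:sect} yields only that $\tilde\omega$ is \emph{sectorial}; the theorem does not track the semi-angle, so it does not by itself force $\tilde\omega$ to be nonnegative when $\eA$ is. The fix is immediate using the tools you already have: your form identity gives $\IM\tilde\omega[\Gamma u]=\IM(\eA u,u)=0$ for every $u\in\dom(\eA)$, hence $\IM\tilde\omega[e]=0$ on $\dom(\bm{Z})=\Gamma\dom(\eA)$, while $\RE\tilde\omega[e]=\RE(\bm{Z}e,e)_\cH-\|X_0e\|^2\ge 0$ is exactly the inequality appearing in Theorem~\ref{thm:properbijective}. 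In fact your identity handles both directions of the nonnegative case more cleanly than the appeal to Theorem~\ref{thm:sect}.
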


\begin{remark}\label{closure} The form $\tilde{\omega}$ admits a closure in the
Hilbert space $\cH'$ defined by~\eqref{inprod}. Actually, since
$\tilde{\omega}[e,g]=(\bm{Z}e,g)_\cH-\Omega_0[e,g]$ is sectorial,
the form
\[
\eta[e,f]=(\bm{Z}e,g)_\cH-i(\gamma(i)e,\gamma(i)f),\;
e,f\in\cH'(=\cD_0)
\]
is sectorial as well. If
\begin{gather*}
\lim\limits_{n\to \infty}e_n=0 \quad\mbox{in}\quad \cH',\\
\lim\limits_{m,n\to\infty}\tilde{\omega}[e_n-e_m]=0,
\end{gather*}
then
\begin{gather*}
\lim\limits_{n\to\infty}e_n=0\quad\mbox{in}\quad\cH,\quad\lim\limits_{n\to\infty}
\RE\Omega[e_n]=\lim\limits_{n\to\infty}||X_0e_n||^2=0,\\
\lim\limits_{n\to\infty}\gamma(i)e_n=0 \quad\mbox{in}\quad\sH.
\end{gather*}
Since linear relation $\bm{Z}$ is $m$-accretive and $\bm{Z}-i\gamma^*(i)\gamma(i)$
is sectorial, we get\linebreak $\lim\limits_{n\to\infty}(\bm{Z}e_n,e_n)_\cH=0$ (see~\cite{Kato}).
\end{remark}

Next we will find relationships between
\begin{itemize}
\item
 a boundary
triplet $\left\{\cH,\Gamma_1,\Gamma_0\right\}$ for ${A}^*$ given by
Definition~\ref{boundtrip} and boundary triplets
$\left\{\cH,G,\Gamma\right\}$, $\left\{\cH,G_*,\Gamma\right\}$ of Definitions~\ref{boundtip1} and~\ref{boundtip2};
\item parameterizations of quasi-selfadjoint $m$-accretive extensions
given by Theorem~\ref{BTN} and Theorem~\ref{thm:properbijective}.
\end{itemize}

Let $\left\{\cH,\Gamma_1,\Gamma_0\right\}$ be a boundary triplet of
${A}^*$ (see Definition~\ref{boundtrip}) such that $\ker(\Gamma_0)=\dom(A_F)$. Then
\begin{enumerate}
\item since $\dom(A_F)$ is a core of $\domf[A]$ and $\ker(\Gamma_0)=\dom(A_F)$,
 we can define a
boundary pair $\{\cH,\overline{\Gamma}_0\}$ where
$\overline{\Gamma}_0$ is a continuation of $\Gamma_0$ onto
$\sL=\domf[A]\dotplus\sN_i$ from $\dom(A^*)=\dom(A_F)\dotplus\sN_i$;
\item it follows that
$$\gamma(\lambda)=\left(\overline{\Gamma}_0\uphar\sN_\lambda\right)^{-1}=
\Gamma_0(\lambda);$$
\item
because relation~\eqref{eq:MlMz} can be rewritten as
\[
M_0(\lambda)-M_0(z)=(\lambda-z)\gamma^*(\bar z)\gamma(\lambda),
\]
using~\eqref{QFU}, one gets
\[
\cQ(\lambda)=(\lambda-i)\gamma^*(i)\gamma(\lambda)
=\frac{\lambda-i}{\lambda+i}(M_0(\lambda)-M_0(-i));
\]
so,
\begin{equation}
\label{mq}
M_0(\lambda)-M_0(-i)=\frac{\lambda+i}{\lambda-i}\cQ(\lambda);
\end{equation}
\item equation~\eqref{mq} yields that the linear manifolds $\cD_0$ in Theorems
\ref{BTN} and Theorem~\ref{lem:QGlims} coincide and
\[
\tau[h,g]=(M_0(-i)h,g)_\cH+\Omega_0[h,g],\; h,g\in\cD_0;
\]
\item comparing resolvent formulas~\eqref{res} and~\eqref{eq:ssp} we
 get that the linear relation $\bm{Z}$ from Theorem~\ref{thm:properbijective} and the linear relation $\wt{\bf T}$
from Theorem~\ref{BTN} (see~\eqref{prop}, \eqref{nonneg}) are
connected by the equality
\begin{equation}\label{eq:ZT}
\bm{Z}=\wt{\bf T}-M(-i).
\end{equation}
\end{enumerate}

\begin{proposition}
Let $\{\cH,\Gamma\}$ be a boundary pair for nonnegative symmetric operator $A$. Let $\eA$ be a quasi-selfadjoint $m$-accretive extension of $A$ and let $\bm{Z}$ be the corresponding linear relation in $\cH$ (see Theorem~\ref{thm:properbijective}). Then
\[
\bm{Z}^*+2i\gamma^*(i)\gamma(i)
\]
corresponds to the adjoint extension $\eA^*$.
\end{proposition}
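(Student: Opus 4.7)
The plan is to use the Kre\u{\i}n-type resolvent formula~\eqref{eq:ssp} of Theorem~\ref{thm:properbijective} and take the Hilbert space adjoint. Since $A$ is symmetric, the inclusion $A\subset\eA\subset A^*$ together with the $m$-accretivity of $\eA$ implies $A\subset\eA^*\subset A^*$ and that $\eA^*$ is also $m$-accretive, so it is again a quasi-selfadjoint $m$-accretive extension; by Theorem~\ref{thm:properbijective} it corresponds to a unique $m$-accretive linear relation $\bm{Y}$ in $\cH$, and the goal reduces to showing $\bm{Y}=\bm{Z}^*+2i\gamma^*(i)\gamma(i)$.

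Writing~\eqref{eq:ssp} for $\eA$ at $\bar\lambda\in\rho(\eA)\cap\rho(A_F)$, taking the Hilbert space adjoint, and using $A_F^*=A_F$, $(\gamma(\bar\lambda))^*=\gamma^*(\bar\lambda)$, $(\gamma^*(\lambda))^*=\gamma(\lambda)$, the identity $(M^{-1})^*=(M^*)^{-1}$, and $\overline{(\bar\lambda+i)/(\bar\lambda-i)}=(\lambda-i)/(\lambda+i)$, one obtains
\[
(\eA^*-\lambda I)^{-1}=(A_F-\lambda I)^{-1}+\gamma(\lambda)\left(\bm{Z}^*-\frac{\lambda-i}{\lambda+i}\cQ^*(\bar\lambda)\right)^{-1}\gamma^*(\bar\lambda).
\]
Comparing this expression with~\eqref{eq:ssp} applied to $\eA^*$ with its parameter $\bm{Y}$ (the comparison is legitimate because $\gamma(\lambda)$ is injective and $\ran\gamma^*(\bar\lambda)$ is dense in $\cH$) yields
\[
\bm{Y}=\bm{Z}^*+\frac{\lambda+i}{\lambda-i}\cQ(\lambda)-\frac{\lambda-i}{\lambda+i}\cQ^*(\bar\lambda).
\]

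The main technical step is then to establish the $\lambda$-independent algebraic identity
\[
\frac{\lambda+i}{\lambda-i}\cQ(\lambda)-\frac{\lambda-i}{\lambda+i}\cQ^*(\bar\lambda)=2i\gamma^*(i)\gamma(i),\qquad\lambda\in\rho(A_F).
\]
Using $\cQ(\lambda)=(\lambda-i)\gamma^*(i)\gamma(\lambda)$ and $\cQ^*(\bar\lambda)=(\lambda+i)\gamma^*(\bar\lambda)\gamma(i)$, this reduces to checking $(\lambda+i)\gamma^*(i)\gamma(\lambda)-(\lambda-i)\gamma^*(\bar\lambda)\gamma(i)=2i\gamma^*(i)\gamma(i)$. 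Expanding $\gamma(\lambda)$ by~\eqref{eq:gammalz1} with $z=i$ and the adjoint identity $\gamma^*(\bar\lambda)=\gamma^*(i)+(\lambda+i)\gamma^*(i)(A_F-\lambda I)^{-1}$, the two terms quadratic in $\gamma^*(i)(A_F-\lambda I)^{-1}\gamma(i)$ carry the common factor $(\lambda^2+1)$ and cancel, leaving precisely $[(\lambda+i)-(\lambda-i)]\gamma^*(i)\gamma(i)=2i\gamma^*(i)\gamma(i)$.

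The chief points to monitor are twofold: first, the legitimacy of the resolvent comparison at the level of possibly multi-valued linear relations, which is fine because the bijective correspondence in Theorem~\ref{thm:properbijective} uniquely recovers the parameter from the resolvent; second, the need to verify that $\bm{Z}^*+2i\gamma^*(i)\gamma(i)$ is $m$-accretive with domain in $\cD_0$ and meets the inequality in Theorem~\ref{thm:properbijective}, but all three properties hold automatically once this expression has been identified with the parameter $\bm{Y}$ of the (already known to exist) quasi-selfadjoint $m$-accretive extension $\eA^*$.
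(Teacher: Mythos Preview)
Your argument is correct, but it follows a genuinely different route from the paper. The paper invokes the correspondence~\eqref{eq:ZT} between the parameter $\bm Z$ in Theorem~\ref{thm:properbijective} and the parameter $\wt{\bf T}$ in the classical Derkach--Malamud boundary triplet (Theorem~\ref{BTN}), together with the standard fact that $\wt A\leftrightarrow\wt{\bf T}$ implies $\wt A^*\leftrightarrow\wt{\bf T}^*$. Then the computation is immediate: $\wt{\bf T}^*-M(-i)=\bm Z^*+M(i)-M(-i)$, and $M(i)-M(-i)=2i\gamma^*(i)\gamma(i)$ by~\eqref{eq:MlMz}. By contrast, you stay entirely within the framework of Theorem~\ref{thm:properbijective}: you take the adjoint of the Kre\u\i n-type resolvent formula~\eqref{eq:ssp} and reduce the question to the algebraic identity $\frac{\lambda+i}{\lambda-i}\cQ(\lambda)-\frac{\lambda-i}{\lambda+i}\cQ^*(\bar\lambda)=2i\gamma^*(i)\gamma(i)$, which you verify directly from~\eqref{QFU} and~\eqref{eq:gammalz1}. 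Your approach is more self-contained, requiring neither the auxiliary boundary triplet $\{\cH,\Gamma_1,\Gamma_0\}$ nor the Weyl function $M_0$; the paper's approach is shorter because the heavy lifting (the identity for $M_0$ and the correspondence $\wt A^*\leftrightarrow\wt{\bf T}^*$) is outsourced to the classical theory. Your care about recovering the relation from its bounded inverse and about choosing $\lambda$ with $\lambda,\bar\lambda\in\rho(\eA)\cap\rho(A_F)$ is appropriate.
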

\begin{proof}
The proof it easy, if we recall that to the adjoint extension $\eA^*$ corresponds the adjoint linear relation $\widetilde{\bm{T}}^*$. Since
\[
\wt{\bf T}=\bm{Z}+M(-i).
\]
Then
\[
\wt{\bf T}^*=\bm{Z}^*+M^*(-i).
\]
Again, it follows from~\eqref{eq:ZT}, equality $M^*(z)=M(\bar z)$, and~\eqref{eq:MlMz} that the adjoint extension $\eA^*$ corresponds to
\[
\wt{\bf T}^*-M(-i)=\bm{Z}^*+M^*(-i)-M(-i)=\bm{Z}^*+2i\gamma^*(i)\gamma(i).\qedhere
\]
\end{proof}


\section{$m$-sectorial extensions of a symmetric operator in the model of two point interactions on a plane}

Let $y_1,y_2\in\dR^2$. Consider in the Hilbert space $L_2(\mathbb{R}^2)$ the operator $A$
given by:
\begin{equation}\label{OPS2}
\begin{aligned}
\dom(A)&=\left\{f(x)\in W_2^2(\dR^2): f(y_1)=f(y_2)=0,\quad k=1,2\right\},\\
Af&=-\Delta f,
\end{aligned}
\end{equation}
where $x=(x_1,x_2)\in\Rtwo$, $W_2^2(\dR^2)$ is a Sobolev space, and
$$\Delta=\cfrac{\partial^2}{\partial x_1^2}+\frac{\partial^2}{\partial x_2^2}$$
is Laplacian.

The operator $A$ is a densely defined closed nonnegative symmetric
with defect indices $(2,2)$~\cite{Albev}. Such operators are basic
in the models of point interactions~\cite{Albev}. In the case of
one point the corresponding operator
\[
\dom(A_y)=\left\{f(x)\in W_2^2(\dR^2): f(y)=0\right\},\; A_y
f=-\Delta f
\]
 admits a unique nonnegative selfadjoint
extension~\cite{Adamyan,GKMT}, the free Hamiltonian:
\begin{gather*}
\dom(A_F)=W_2^2(\dR^2),\quad A_Ff=-\Delta f,
\end{gather*}
Therefore, $A_y$ has no $m$-sectorial and quasi-selfadjoint
$m$-accretive extensions. All $m$-accretive extensions of $A_y$ have
been described in~\cite{ArlPopMAccExt}. For two and more point
interactions the relation $A_F\ne A_N$ holds~\cite{Adamyan}. In this
section we apply Theorems~\ref{thm:bijective} and~\ref{thm:sect}
for a parametrization of all $m$-sectorial extensions of the
operator $A$.
It is convenient to use the Fourier transform and the momentum
representation of $A$:
\begin{align*} \hat A \hat f(p)&=|p|^2\hat f(p),\\
\dom(\hat A)&=\left\{\hat f(p)\in L_2(\dR^2, dp):\begin{aligned} 1)\,&|p|^2\hat f(p)\in L_2(\dR^2,
dp),\\
2)\,&\int_{\dR^2} \hat f(p) e^{ipy_1}dp=\int_{\dR^2} \hat f(p) e^{ipy_2}dp=0.\end{aligned}\right\}
\end{align*}
For a one-center point interaction this method has been used
in~\cite{ArlPopMAccExt}.
In this paper we omit details in the momentum representation and
present final results in the coordinate representation.

The Friedrichs extension of the operator $A$ is the free Hamiltonian
$A_F$ and $A^{1/2}_F=(-\Delta)^{1/2}$ is a pseudodifferential
operator of the form:
\begin{align*}
\dom(A_{F}^{1/2})&=\domf[A_F]=W_2^1(\dR^2),\\
A^{1/2}_F f(x)&=\frac{1}{4\pi^2}\iint\limits_{\dR^2\times\dR^2}|p|\exp{(i(x-y)p)}f(y)dydp,
\end{align*}
where $W^{k}_2(\dR^2)$, $k=1,2$, are the Sobolev spaces. Note that,
see~\cite{Albev}, the resolvent is of the form
\begin{multline*}
(A_F-\lambda
I)^{-1}f(x)=\frac{i}{4}\int_{\dR^2}H_0^{(1)}(\sqrt{\lambda}|x-y|)f(y)dy,\;
f\in L_2(\dR^2),\\
 \lambda\in\dC\setminus [0,+\infty),\;\IM \sqrt{\lambda}>0,
\end{multline*}
where $H_0^{(1)}(\cdot)$ denotes the Hankel function of first kind
and order zero~\cite{olver2010nist}. It is well known~\cite{Albev} that
\begin{multline*}
\sN_{\lambda}=\left\{\frac{\pi i}{2}\sum_{k=1}^2
H_0^{(1)}(\sqrt{\lambda}|x-y_k|)c_k,\; c_1,c_2\in\dC\right\},\\
\lambda\in\dC\setminus [0,+\infty),\; \IM \sqrt{\lambda}>0
\end{multline*}
is the defect
subspace of $A$, corresponding to $\lambda$.
Therefore, for the linear manifold $\sL$ defined by~\eqref{linl} we have
\begin{multline*}
\sL=W^{1}_2(\dR^2)\dot+\sN_\lambda\\
=\left\{ f(x)+ \frac{\pi i}{2}\sum_{k=1}^2 H_0^{(1)}(\sqrt{\lambda}|x-y_k|)c_k,\; f\in W^{1}_2(\dR^2), \; c_1, c_2\in\dC\right\},
\end{multline*}
where $\lambda$ is a number from $\dC\setminus [0,+\infty)$. Now, let $\cH=\dC^2$ and set
\[
\label{granoper} \Gamma \left(f(x)+\frac{\pi i}{2}\sum_{k=1}^2
H_0^{(1)}(\sqrt{\lambda}|x-y_k|)c_k\right)=\vec
c=\begin{bmatrix}c_1\\c_2\end{bmatrix}\in\dC^2,\;f(x)\in
W_2^1(\dR^2).
\]
Then from the equality
$\overline{H_0^{(1)}(\sqrt{\lambda}|x|)}=H_0^{(2)}(\sqrt{\overline{\lambda}}|x|)$~\cite{olver2010nist}
it follows that
\begin{gather*}
\gamma(\lambda)\vec{c}=\frac{\pi i}{2}\sum_{k=1}^2 H_0^{(1)}(\sqrt{\lambda}|x-y_k|)c_k,\, \vec{c}=\begin{bmatrix}c_1\\c_2\end{bmatrix}\in\dC^2,\\
\gamma^*(\bar\lambda) h(x)=-\frac{\pi
i}{2}\begin{bmatrix}\displaystyle\int\limits_{{\mathbb R}^2} h(x)
H_0^{(2)}
(\sqrt{\lambda}|x-y_1|)dx\\ \displaystyle\int\limits_{{\mathbb R}^2}
h(x) H_0^{(2)}(\sqrt{\lambda}|x-y_2|)dx\end{bmatrix}.
\end {gather*}
Set $r=|y_1-y_2|$,
\[
H(\lambda,r)=H_0^{(1)}(\sqrt{\lambda}r)-H_0^{(1)}(e^{3\pi
i/4}r).
\]
From~\eqref{QFU}, using unitarity of the Fourier transform, one can
derive that the matrix $\cQ(\lambda)$ in the standard basis is of
the form:
\begin{equation*}
\cQ(\lambda)=\frac{\lambda-i}{\lambda+i}\pi\begin{bmatrix}-\ln(\lambda i)&\pi iH(\lambda,r)\\
\pi iH(\lambda,r)&-\ln(\lambda i)\end{bmatrix}.
\end{equation*}
 Hence,
\[
\cQ^*(\lambda)=\frac{\bar\lambda+i}{\bar\lambda-i}\pi\begin{bmatrix}-\ln\left(\frac{\bar\lambda}{i}
\right)&-\pi i\bar{H}(\lambda,r
)\\
-\pi
i\bar{H}(\lambda,r)&-\ln\left(\frac{\bar\lambda}{i}\right)\end{bmatrix}.
\]

Now we will find the subspace $\cD_0$ and the sesquilinear form $\Omega_0[\cdot,\cdot]$ (see
Theorem~\ref{lem:QGlims}).
\begin{multline*}
(\cQ(\lambda)\vec{c},\vec{d})=\frac{\lambda-i}{\lambda+i}\pi\begin{bmatrix}d_1\\d_2\end{bmatrix}^*
\begin{bmatrix}-\ln(\lambda i)&\pi iH(\lambda,r
)\\
\pi iH(\lambda,r
)&-\ln(\lambda i)\end{bmatrix}\begin{bmatrix}c_1\\c_2\end{bmatrix}\\
=\frac{\lambda-i}{\lambda+i}\pi\left( -(c_1\bar{d_1}+c_2\bar{d_2})\ln(\lambda i)\right.\\
+\left.(c_2\bar{d_1}+c_1\bar{d_2})\pi i\left(H_0^{(1)}(\sqrt{\lambda}r)-H_0^{(1)}(e^{3\pi i/4}r)\right)\right).
\end{multline*}
Taking into account the asymptotic behavior~\cite{olver2010nist}
\[H_0^{1}(\lambda)=1+\frac{2i}{\pi}\left(\ln\left(\frac{\lambda}{2}\right)+\gamma\right)+o(\lambda),\quad
\lambda\to 0,\] where $\gamma$ is Euler's constant, we see that
\[
\cD_0:=\left\{e\in\cH:\lim_{\substack{\overline{z\to
0},\\z\in\dC\setminus[0,+\infty)}}\left|(\cQ(z)e,e)_\cH\right|<\infty\right\}
=\left\{\begin{bmatrix}\zeta\\-\zeta\end{bmatrix}\in\dC^2:\zeta\in\dC\right\},
\]
Let
\[
\vec{c}_0 = \begin{bmatrix} 1\\ -1\end{bmatrix}.
\]
Then
\begin{multline*}
\Omega_0[\zeta\vec{c}_0,\eta\vec{c}_0]=-\zeta\bar{\eta}\lim_{\substack{\lambda\rightarrow 0\\\lambda < 0}}(\cQ(\lambda)\vec{c}_0,\vec{c}_0)\\
=\pi\zeta\bar{\eta}\lim_{\substack{\lambda\rightarrow 0\\\lambda < 0}}\left(-2\ln(\lambda i)-2\pi i\big(H_0^{(1)}(\sqrt{\lambda}r)-H_0^{(1)}(e^{3\pi i/4}r)\big)\right)\\
=2\pi\zeta\bar{\eta}\lim_{\substack{\lambda\rightarrow 0\\\lambda < 0}}\left(-\ln(\lambda i)-\pi i\Big(1+\frac{2i}{\pi}\big(\ln\left(\frac{\sqrt{\lambda}r}{2}\right)+\gamma\big)-H_0^{(1)}(e^{3\pi i/4}r)\Big)\right)\\
=2\pi\zeta\bar{\eta}\lim_{\substack{\lambda\rightarrow 0\\\lambda < 0}}\left(-\ln(\lambda i)-\pi i+2\ln\left(\frac{\sqrt{\lambda}r}{2}\right)+2\gamma+\pi i H_0^{(1)}(e^{3\pi i/4}r)\right)\\
=4\pi\zeta\bar{\eta}\left(\ln\frac{r}{2} - \frac{3\pi i}{4}+\gamma+\frac{\pi i}{2}H_0^{(1)}(e^{3\pi i/4}r)\right)=\omega_0\cdot\zeta\bar{\eta},
\end{multline*}
where
\[\omega_0=4\pi\left(\ln\frac{r}{2} - \frac{3\pi i}{4}+\gamma+\frac{\pi i}{2}H_0^{(1)}(e^{3\pi i/4}r)\right).\]
From the latter equality one can obtain that
\[
\RE\Omega_0[\zeta\vec{c}_0,\eta\vec{c}_0] = \RE \omega_0\cdot\zeta\bar{\eta} = 4\pi\left(\ln\frac{r}{2} +\gamma+\kker(r)\right)\zeta\bar{\eta},
\]
where the functions $\kker(\cdot)$ and $\kkei(\cdot)$ are Kelvin functions~\cite[p.268]{olver2010nist}, i.e., the real and imaginary parts of the function $\dfrac{\pi i}{2}H_0^{(1)}(e^{3\pi i/4}(\cdot))$, respectively:
\[
\kker(r) + i\kkei(r)=\dfrac{\pi i}{2}H_0^{(1)}(e^{3\pi i/4}r).
\]

For the operator-functions $\Phi(\lambda)$, $\cG(\lambda)$, $\cQ^*(\bar\lambda)$, and $q(\lambda)$ on $\cD_0=\dom(\Omega_0)$ we have:
\begin{align*}
\Phi(\lambda)X\begin{bmatrix}\zeta\\-\zeta\end{bmatrix}&=\frac{\zeta}{4\pi^2}\iint\limits_{\dR^2\times\dR^2}\frac{|p|}{|p|^2-\lambda}\exp{(i(x-y)p)}g(y)dydp,\\
\cG(\lambda)X\begin{bmatrix}\zeta\\-\zeta\end{bmatrix}&=-\frac{\pi i(\lambda+i)\zeta}{2}\begin{bmatrix}\displaystyle\intRtwo \Phi(\lambda)(f(x)) H_0^{(2)}\left(e^{3\pi i/4}|x-y_1|\right)dx\\ \displaystyle\intRtwo \Phi(\lambda)(f(x)) H_0^{(2)}\left(e^{3\pi i/4}|x-y_2|\right)dx\end{bmatrix},\\
\cQ^*(\bar\lambda)\begin{bmatrix}\zeta\\-\zeta\end{bmatrix}&=\frac{\lambda+i}{\lambda-i}\pi\left(-\ln\left(\frac{\lambda}{i}\right)+\pi i\overline{H(\bar\lambda,|y_1-y_2|)}\right)\begin{bmatrix}\zeta\\-\zeta\end{bmatrix},
\end{align*}
\begin{align*}
q(\lambda)\begin{bmatrix}\zeta\\-\zeta\end{bmatrix}&=\frac{\pi i}{2}\frac{1}{i-\lambda}\zeta\left((i+\lambda)(H_0^{(1)}(\sqrt{\lambda}|x-y_2|)-H_0^{(1)}(\sqrt{\lambda}|x-y_1|))\right.\\
&\qquad\qquad\qquad+2i\left.(H_0^{(1)}(e^{\pi i/4}|x-y_1|)-H_0^{(1)}(e^{\pi i/4}|x-y_2|))\right).
\end{align*}

Now we find the operator $X_0e=i\hat A_{F}^{-1/2}\gamma(i)e,\; e\in\cD_0$ from Theorem~\ref{lem:QGlims}. As was mentioned above it is convenient to use the momentum representation. Let $\hat\gamma(\lambda)=\cF\gamma(\lambda)$, where
\[
\hat f(p)=(\cF f)(x)=\cfrac{1}{2\pi}\intRtwo f(x)e^{-ix\cdot p}dx,\quad p=(p_1,p_2).
\]
is the Fourier transform of $f(x)\in L_2(\dR^2,dx)$. Then
\[
\hat\gamma(\lambda)\vec{c}=\sum_{k=1}^{2} c_k\frac{e^{-ipy_k}}{|p|^2-\lambda},\quad\forall \vec{c}=\begin{bmatrix}c_1\\c_2\end{bmatrix}\in\dC^2.
\]
Hence,
\[
\hat X_0\begin{bmatrix}\zeta\\-\zeta\end{bmatrix}=\cF X_0\begin{bmatrix}\zeta\\-\zeta\end{bmatrix}=i\hat
A_{F}^{-1/2}\gamma(i)\begin{bmatrix}\zeta\\-\zeta\end{bmatrix}=\frac{i(e^{-ipy_1}-e^{-ipy_2})}{|p|(|p|^2-i)}\zeta,
\]
So, $\hat X_0\begin{bmatrix}\zeta\\-\zeta\end{bmatrix}=\hat g_0(p)\zeta$, where
\begin{equation}
\label{g0f}
\hat g_0(p)=\dfrac{i(e^{-ipy_1}-e^{-ipy_2})}{|p|(|p|^2-i)}.
\end{equation}
 Getting back to the coordinate representation, we obtain,
 using~\cite{sneddon1995fourier},~\cite[p.671]{GradRyzh}, that
\begin{multline*}
g_0(x)=\cF^{-1}\hat g_0(p)=\cfrac{1}{2\pi}\intRtwo \frac{i(e^{ip(x-y_1)}-e^{ip(x-y_2)})}{|p|(|p|^2-i)} dp=\\
=i\int_0^{+\infty}\frac{J_0(\rho|x-y_1|)-J_0(\rho|x-y_2|)}{\rho^2-i}d\rho=\\
=\frac{\pi i}{2\sqrt{-i}}\left(I_0(\sqrt{-i}|x-y_1|)-L_0(\sqrt{-i}|x-y_1|)\right)\\
-\frac{\pi i}{2\sqrt{-i}}\left(I_0(\sqrt{-i}|x-y_2|)-L_0(\sqrt{-i}|x-y_2|)\right)=\\
=-\frac{\pi}{2}e^{3\pi i/4}\left(\mathbf{M}_0(e^{-\pi i/4}|x-y_1|)-\mathbf{M}_0(e^{-\pi i/4}|x-y_2|)\right),
\end{multline*}
where $I_0(\cdot)$ is the Bessel function and $L_0(\cdot), \mathbf{M}_0(\cdot)$ are modified Struve functions~\cite[p.288]{olver2010nist}. So,
\[
X_0\begin{bmatrix}\zeta\\-\zeta\end{bmatrix}=g_0(x)\zeta,
\]
where
\[
g_0(x)=-\frac{\pi}{2}e^{3\pi i/4}\left(\mathbf{M}_0(e^{-\pi i/4}|x-y_1|)-\mathbf{M}_0(e^{-\pi i/4}|x-y_2|)\right).
\]
According to~\eqref{ravomg} we have
\[
\|g_0(x)\|_\Ltwo^2 = \RE \omega_0= 4\pi\left(\ln\frac{r}{2} +\gamma+\kker(r)\right).
\]
\begin{remark}
Since $\|g_0(x)\|_\Ltwo^2=\|\hat g_0(p)\|_\Ltwo^2$ (the unitarity of
the Fourier transform), expression~\eqref{g0f} for $\hat g_0(p)$
gives
\[
\|\hat g_0(p)\|_\Ltwo^2 = 4\pi\int_0^\infty
\frac{1-J_0(r\rho)}{\rho(\rho^4+1)}d\rho.
\]
On the other hand, due to~\eqref{ravomg}, we have
$$\|g_0(x)\|_\Ltwo^2 = \RE \omega_0.$$
This leads to the value of the improper integral $\displaystyle\int_0^\infty
\frac{1-J_0(r\rho)}{\rho(\rho^4+1)}d\rho$:
\[
\int_0^\infty \frac{1-J_0(r\rho)}{\rho(\rho^4+1)}d\rho=\cfrac{1}{4\pi}\RE \omega_0=\left(\ln\frac{r}{2} +\gamma+\kker(r)\right).
\]
\end{remark}

In order to describe all $m$-sectorial extensions of $A$ we need to define
pairs $\angles{\bm{Z},X}$ satisfying conditions~3), 4) from
Theorem~\ref{thm:main} and conditions~1)--3) of
Theorem~\ref{thm:sect}. Since $\bm{Z}$ is $m$-accretive
linear relation in $\dC^2$ and $\dom(\bm{Z})\subseteq\cD_0$, there are only
two possible cases:
\begin{enumerate}
\item $\bm{Z}=\left<{\begin{bmatrix}\zeta\\-\zeta\end{bmatrix},z\cdot\begin{bmatrix}
\zeta\\-\zeta\end{bmatrix}}\right>\oplus\left<{0,\begin{bmatrix}\eta\\\eta\end{bmatrix}}\right>$,
$\zeta,\eta,z\in\dC$, $\RE z\geqslant 0$;
\item $\bm{Z}=\angles{0,\dC^2}$.
As has been mentioned in~\cite{ArlPopMAccExt} this linear relation
corresponds to the Friedrichs extension $A_F$ of $A$.
\end{enumerate}
In the first case the operator $X,$ acting from $\dom(\bm{Z})$ into ${L_2(\dR^2)},$ takes the form
$X\begin{bmatrix}\zeta\\-\zeta\end{bmatrix}=\zeta g(x),$
where a function $g(x)\in L_2(\dR^2)$ satisfies the condition
\begin{equation}
\label{eq:Zxcond}
\|g(x)\|^2_{L_2(\dR^2)}=\intRtwo |g(x)|^2 dx\leqslant 2\RE z.
\end{equation}
 For the form ${\omega}[\cdot,\cdot]$ defined by~\eqref{formaom} we have
\begin{multline}
{\omega}[\zeta\vec{c}_0,\eta\vec{c}_0]=(\bm{Z}\zeta\vec{c}_0,\eta\vec{c}_0)-\Omega_0[\zeta\vec{c}_0,\eta\vec{c}_0]-2((X-X_0)\zeta\vec{c}_0,X_0\eta\vec{c}_0)\\
=\left(2z-\omega_0-2\intRtwo (g(x)-g_0(x))\overline{g_0(x)} dx\right)\zeta\bar\eta.
\end{multline}
\[
\RE{\omega}[\zeta\vec{c}_0]=\left(2\RE z+\intRtwo \left|g(x)-g_0(x)\right|^2 dx -\intRtwo |g(x)|^2 dx\right)|\zeta|^2.
\]
Thus, the form ${\omega}[\cdot,\cdot]$ is determined by the number
\begin{equation}
 w_{\angles{z, g(x)}} = 2z-\omega_0-2\intRtwo (g(x)-g_0(x))\overline{g_0(x)} dx.
\end{equation}
Clearly, the form ${\omega}[\cdot,\cdot]$ is sectorial iff
\begin{equation}\label{eq:wsectcond}
\begin{gathered}
\RE w_{\angles{z, g(x)}}=2\RE z+\intRtwo \left|g(x)-g_0(x)\right|^2 dx -\intRtwo |g(x)|^2 dx>0\\
\text{ or }w_{\angles{z, g(x)}}= 0.
\end{gathered}
\end{equation}

\begin{remark} Due to
$2\RE z-\intRtwo |g(x)|^2 dx\ge 0$ the equality $ w_{\angles{z, g(x)}} = 0$ implies
that $g(x)=g_0(x)$ almost everywhere and $z=\omega_0/2$.
\end{remark}
Further, condition~3) from Theorem~5 takes the form
\[
M\intRtwo \left|g(x)-g_0(x)\right|^2 dx\leqslant 2\RE z-\intRtwo |g(x)|^2 dx,
\]
where $M>0$. 
The latter inequality 
can be simplified as follows
\begin{equation}
\label{subord}
2\RE z-\intRtwo |g(x)|^2 dx>0.
\end{equation}
So, conditions~\eqref{eq:Zxcond},~\eqref{eq:wsectcond} are satisfied.
Note, that in this case linear relation $\bm{W}(\lambda)$, see~\eqref{eq:lrW}, is the of the form
\begin{multline*}
\bm{W}(\lambda)=\left\langle\begin{bmatrix}\zeta\\-\zeta\end{bmatrix},\left(z-\frac{\lambda+i}{\lambda-i}\pi\left(-\ln\left(\frac{\lambda}{i}\right)+\pi i\overline{H(\bar\lambda,|y_1-y_2|)}\right)\right)\cdot\begin{bmatrix}\zeta\\-\zeta\end{bmatrix}\right.\\
-\left.\pi i(\lambda+i)\zeta\begin{bmatrix}\intRtwo \Phi(\lambda)(g(x)) H_0^{(2)}(e^{3\pi i/4}|x-y_1|)dx\\ \intRtwo \Phi(\lambda)(g(x)) H_0^{(2)}(e^{3\pi i/4}|x-y_2|)dx\end{bmatrix}+\begin{bmatrix}\eta\\\eta\end{bmatrix}\right\rangle.
\end{multline*}
for all $\lambda\in\rho(A_F)=\dC\setminus [0,+\infty)$.
Then
\[
\bm{W}^{-1}(\lambda)=\left\langle\begin{bmatrix}\zeta\\\eta\end{bmatrix}, \frac{1}{w_{\angles{z,g(x)}}(\lambda)}\begin{bmatrix}\zeta-\eta\\-\zeta+\eta\end{bmatrix}\right\rangle,
\]
where
\begin{multline*}
w_{\angles{z,g(x)}}(\lambda)=2\left(z-\frac{\lambda+i}{\lambda-i}\pi\left(-\ln\left(\frac{\lambda}{i}\right)+\pi i\overline{H(\bar\lambda,|y_1-y_2|)}\right)\right)\\
-\pi i(\lambda+i)\intRtwo \Phi(\lambda)(g(x)) \left(H_0^{(2)}(e^{3\pi i/4}|x-y_1|)-H_0^{(2)}(e^{3\pi i/4}|x-y_2|)\right)dx.
\end{multline*}
Clearly, $\ker(\bm{W}(\lambda))\neq \{0\}$ iff $w_{\angles{z, g(x)}}(\lambda)=0$ and
\[
\ker (\bm{W}(\lambda))=\dom (\bm{W}(\lambda))=\begin{bmatrix}\eta\\-\eta\end{bmatrix},\quad \eta\in\dC.
\]
Let an $m$-sectorial extension $\eA$ of $A$ be defined by a pair $\angles{z, g(x)},$ satisfying~\eqref{subord}, see Theorem~\eqref{thm:sect}.
Since $\eA$ is $m$-sectorial extension and
\[
G(-i)=0,\quad \cQ^*(-i)=0,\quad q(-i)=\gamma(i),
\]
it is suitable to take $\lambda=-i$ and apply Theorem~\ref{thm:bijective}, Remark~\ref{mnim}, and equalities~\eqref{domain},~\eqref{action},~\eqref{domain1}.
Then,
\begin{gather*}
\bm{W}=\bm{W}(-i)=\bm{Z}=\left<{\begin{bmatrix}\zeta\\-\zeta\end{bmatrix},z\cdot\begin{bmatrix}\zeta\\-
\zeta\end{bmatrix}}\right>\oplus\left<{\begin{bmatrix}0\cr 0\end{bmatrix},\,\begin{bmatrix}\eta\\\eta\end{bmatrix}}\right>,\\
\bm{W}^{-1}=\left\langle\begin{bmatrix}\zeta\\\eta\end{bmatrix}, \frac{1}{2z}\begin{bmatrix}\zeta-\eta\\-\zeta+\eta\end{bmatrix}\right\rangle.
\end{gather*}
By~\eqref{domain}
\[
\dom(\eA)=\Bigl(I+(q(-i)-2\Phi(-i)X)\bm{W}^{-1}(-i)\gamma^*(-i)(A_F + iI)\Bigr)\dom(A_{F}).
\]

Further, let $\delta(x)$, $x=(x_1,x_2)$ be the Dirac delta. Then $\delta(x)\in W^{-2}_2(\dR^2)$~\cite{Albev}. Since $\cF(\delta(x))=1/2\pi$, then $\cF^{-1}(1)=2\pi\delta(x)$. So, if
$\cF(h(x))=\hat h(p)$ and $h(x)\in\dom (A_F)=W^2_2(\dR^2)$, then
\[
\int_{\dR^2}(e^{ipy_1}-e^{ipy_2})\hat h(p) dp=2\pi (h(y_1)-h(y_2)).
\]
Using the latter equality and the Fourier transform we obtain that
\[
\bm{W}^{-1}(-i)\gamma^*(-i)(A_F+iI)h(x)=\frac{\pi(h(y_1)-h(y_2))}{z}.
\]
If $h(x)\in\dom(A_{F})$, then
\[
\dom(\eA)=\left\{\begin{aligned}&h(x)+\frac{\pi(h(y_1)-h(y_2))}{z}\\
&\quad\times\biggl(\frac{\pi i}{2}\left(H_0^{(1)}(e^{3\pi i/4}|x-y_1|)-H_0^{(1)}(e^{3\pi i/4}|x-y_2|)\right)\\
&\qquad\quad-2\Phi(-i)(g(x))\biggr)\end{aligned}\right\}.
\]

Then applying Theorems~\ref{thm:bijective},~\ref{thm:sect} we arrive at the following statement.
\begin{theorem}
There is a bijective correspondence between all $m$-sectorial
extensions $\eA$ (except Friedrichs and Kre\u\i n-von Neumann
extensions) of $A$ given by~\eqref{OPS2} and all pairs $\angles{z,
g(x)}$, where $z\in\dC$ and a function $g(x)\in\Ltwo$ are such that:
\[
\|g(x)\|^2_{L_2(\dR^2)} < 2\RE z.
\]
This correspondence is given by the relations:
\[
\dom({\widetilde{A}})=\left\{\begin{aligned}
&u(x)=h(x)+\frac{\pi(h(y_1)-h(y_2))}{z}\times\\
&\qquad\quad\times\biggl(\frac{\pi i}{2}\left(H_0^{(1)}(e^{3\pi i/4}|x-y_1|)-H_0^{(1)}(e^{3\pi i/4}|x-y_2|)\right)\\
&\qquad\qquad\quad-2\Phi(-i)(g(x))\biggr),\\
&h(x)\in W_2^2(\Rtwo)\end{aligned}\right\},
\]
\begin{multline*}\label{eq:saction}
\tilde{A} u(x)=-\Delta h(x)-i\frac{\pi(h(y_1)-h(y_2))}{z}\\
\times\left(\frac{\pi i}{2}\left(H_0^{(1)}(e^{3\pi i/4}|x-y_1|)-H_0^{(1)}(e^{3\pi i/4}|x-y_2|)\right)-2\Phi(-i)(g(x))\right).
\end{multline*}
Moreover,
\begin{enumerate}
\item a number $\lambda\in\dC\setminus[0,+\infty)$ is a regular point of $\eA$ if and only if ${w_{\angles{z, g(x)}}(\lambda)\neq 0}$ and,
\begin{multline*}
(\eA-\lambda I)^{-1} h(x)=\frac{i}{4}\intRtwo H_0^{(1)}(\sqrt{\lambda}|x-y|)f(y) dy+\frac{1}{w_{\angles{z, g(x)}}}\\
\times\left(\frac{\pi i}{2}\frac{1}{i-\lambda}\left((i+\lambda)(H_0^{(1)}(\sqrt{\lambda}|x-y_2|)-H_0^{(1)}(\sqrt{\lambda}|x-y_1|))\right.\right.\\
+2i\left.\left.(H_0^{(1)}(e^{\pi i/4}|x-y_1|)-H_0^{(1)}(e^{\pi i/4}|x-y_2|))\right)-2\Phi(\lambda)(g(x))\right)\\
\times\left(-\frac{\pi i}{2}\right)\intRtwo \left(H_0^{(2)}(\sqrt{\lambda}|x-y_1|)-H_0^{(2)}(\sqrt{\lambda}|x-y_2|)\right)h(x)dx.
\end{multline*}

\item a number $\lambda\in\rho(A_F)$ is an eigenvalue of $\eA$ if and only if ${w_{\angles{z, g(x)}}(\lambda) = 0}$ and,
\begin{multline*}
\ker(\eA-\lambda I)=\left(\frac{\pi i}{2}\frac{1}{i-\lambda}\left((i+\lambda)(H_0^{(1)}(\sqrt{\lambda}|x-y_2|)-H_0^{(1)}(\sqrt{\lambda}|x-y_1|))\right.\right.\\
+2i\left.\left.(H_0^{(1)}(e^{\pi i/4}|x-y_1|)-H_0^{(1)}(e^{\pi i/4}|x-y_2|))\right)-2\Phi(\lambda)(g(x))\right)\eta,\quad \eta\in\dC.
\end{multline*}
\end{enumerate}
\end{theorem}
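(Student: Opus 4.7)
The plan is to specialize the general bijective description of Theorems~\ref{thm:bijective} and~\ref{thm:sect} to the concrete operator~\eqref{OPS2}, using the explicit boundary pair $\{\dC^2,\Gamma\}$ and the objects $\gamma(\lambda),\Phi(\lambda),q(\lambda),\cQ(\lambda),\cG(\lambda),X_0$ already computed in the preceding text. A pair $\langle\bm{Z},X\rangle$ gives an $m$-sectorial extension $\eA$ of $A$ if and only if the three conditions of Theorem~\ref{thm:sect} hold, and in particular $\dom(\bm{Z})\subseteq\cD_0=\dC\vec c_0$ with $\vec c_0=\begin{bmatrix}1\\-1\end{bmatrix}$. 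Since $\cH=\dC^2$ and $\cD_0$ is one-dimensional, every $m$-accretive linear relation in $\dC^2$ with $\dom(\bm{Z})\subseteq\cD_0$ has either the form (i) $\bm{Z}=\langle\zeta\vec c_0,z\zeta\vec c_0\rangle\oplus\langle 0,\eta\vec c_0^{\perp}\rangle$ with $\RE z\ge 0$, or (ii) $\bm{Z}=\langle 0,\dC^2\rangle$.

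The plan for handling the two cases is as follows. Case (ii) corresponds, as noted in~\cite{ArlPopMAccExt}, to the Friedrichs extension $A_F$, hence is excluded from the statement. In case (i) the operator $X:\dom(\bm{Z})=\dC\vec c_0\to\overline{\ran(A_F)}=L_2(\dR^2)$ is linear one-dimensional, so $X(\zeta\vec c_0)=\zeta g(x)$ for a unique $g\in L_2(\dR^2)$; condition~\eqref{eq:XeZe} yields $\|g\|^2\le 2\RE z$. Substituting into the form $\omega$ of~\eqref{formaom} and using $\RE\Omega_0[\vec c_0]=\|g_0\|^2$ established in~\eqref{ravomg}, I reduce condition~3) of Theorem~\ref{thm:sect} to the strict inequality
\[
2\RE z-\intRtwo|g(x)|^2\,dx>0,
\]
exactly as in the Remark following Theorem~\ref{thm:sect}; the boundary case $2\RE z=\|g\|^2$ forces $g=g_0$ and $z=\omega_0/2$, which corresponds to the Kre\u\i n--von Neumann extension $A_N$ (by Theorem~\ref{thm:SNdef}) and is also excluded. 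Sectoriality of $\omega$ is automatic from~\eqref{eq:wsectcond} once the strict inequality holds, and closability in $\cH'$ is trivial because $\cH'$ is one-dimensional. This yields the claimed bijection and the admissible parameter set.

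To obtain the explicit domain and action, I exploit that at $\lambda=-i$ one has $G(-i)=0$, $\cQ^*(-i)=0$, and $q(-i)=\gamma(i)$, so that $\bm{W}(-i)=\bm{Z}$; the inverse $\bm{W}^{-1}(-i)$ is then the one displayed before the theorem. I then apply the domain formula~\eqref{domain} and the action~\eqref{action},~\eqref{domain1} with $\lambda=-i$. The key calculation is the evaluation
\[
\gamma^*(-i)(A_F+iI)h(x)=-\tfrac{\pi i}{2}\begin{bmatrix}\int_{\Rtwo}(|p|^2+i)\hat h(p)e^{ipy_1}\,dp\\ \int_{\Rtwo}(|p|^2+i)\hat h(p)e^{ipy_2}\,dp\end{bmatrix}\cdot C,
\]
performed in the momentum representation, which by $\cF^{-1}(1)=2\pi\delta$ and $h\in W_2^2(\Rtwo)$ collapses to a multiple of $(h(y_1)-h(y_2))\vec c_0$. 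Composing with $\bm{W}^{-1}(-i)$ and with $q(-i)-2\Phi(-i)X=\gamma(i)-2\Phi(-i)g$ reproduces the parenthesized expression in $\dom(\eA)$ and, after applying $(A_F+iI)$, the stated formula for $\eA u$.

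For the resolvent and eigenvalue statements, I apply parts~(1) and~(2) of Theorem~\ref{thm:bijective} with the explicit $\bm{W}(\lambda)=\bm{Z}-\cQ^*(\bar\lambda)+2\cG(\lambda)X$, in which the already computed formulas for $\cQ^*$, $\cG$ and $\Phi$ in terms of Hankel functions give precisely the scalar $w_{\langle z,g\rangle}(\lambda)$; then~\eqref{eq:sp} produces the displayed resolvent, while $\ker\bm{W}(\lambda)=\dC\vec c_0$ when $w_{\langle z,g\rangle}(\lambda)=0$ together with $(q(\lambda)-2\Phi(\lambda)X)\vec c_0$ gives the asserted eigenspace. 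The most delicate step I anticipate is the Fourier-side reduction of $\gamma^*(-i)(A_F+iI)$ to point evaluations, which requires the Sobolev embedding $W_2^2(\Rtwo)\hookrightarrow C_b(\Rtwo)$ and careful manipulation of $\Phi(-i)(g)$ in $\dom(A_F)$; once this is handled, the remaining steps are substitutions into the general framework of Sections~3 and~4.
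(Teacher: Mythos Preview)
Your proposal is correct and follows essentially the same route as the paper: specialize Theorems~\ref{thm:bijective} and~\ref{thm:sect} to the concrete boundary pair $\{\dC^2,\Gamma\}$, enumerate the $m$-accretive relations $\bm{Z}$ with $\dom(\bm{Z})\subseteq\cD_0$, reduce the sectoriality conditions to the strict scalar inequality $2\RE z>\|g\|^2$ (with the equality case yielding $A_N$), and then evaluate~\eqref{domain}--\eqref{domain1} at $\lambda=-i$ using $q(-i)=\gamma(i)$, $\cQ^*(-i)=0$ and the Fourier computation of $\gamma^*(-i)(A_F+iI)h$ via point evaluations. The resolvent and eigenvalue clauses are likewise obtained exactly as the paper does, by inserting the explicit $\bm{W}(\lambda)$ and $q(\lambda)-2\Phi(\lambda)X$ into~\eqref{eq:sp}.
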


\begin{corollary}
Let $A$ be given by~\eqref{OPS2}. Then there is a bijective
correspondence between all $m$-accretive quasi-selfadjoint
extensions $\eA$ of $A$ (except Friedrichs and Kre\u\i n-von Neumann
extensions) and all
complex numbers $z\in\dC$ such that:
\[\RE z \ge 2\pi\left(\ln\frac{|y_1-y_2|}{2} +\gamma+\kker(|y_1-y_2|)\right).\]
Moreover, an extension $\eA$ is $m$-sectorial if and only if
\[\RE z > 2\pi\left(\ln\frac{|y_1-y_2|}{2} +\gamma+\kker(|y_1-y_2|)\right),\]
and is nonnegative selfadjoint if and only
if
\[
\IM z = \pi\left(-3\pi+4\kkei(|y_1-y_2|)\right)
\]


The correspondence is given by relations
\begin{equation}\label{eq:spdomain}
\dom(\eA)=\left\{\begin{aligned}
&u(x)=h(x)+\frac{\pi(h(y_1)-h(y_2))}{z}\times\\
&\quad\times\biggl(\frac{\pi i}{2}\left(H_0^{(1)}(e^{3\pi i/4}|x-y_1|)-H_0^{(1)}(e^{3\pi i/4}|x-y_2|)\right)\biggr),\\
&h(x)\in W_2^2(\Rtwo)\end{aligned}\right\},
\end{equation}
\begin{multline}\label{eq:spaction}
\eA u(x)=-\Delta h(x)+\frac{\pi^2(h(y_1)-h(y_2))}{2z}\times\\
\times\left(H_0^{(1)}(e^{3\pi i/4}|x-y_1|)-H_0^{(1)}(e^{3\pi i/4}|x-y_2|)\right).
\end{multline}
Moreover,
\begin{enumerate}
\item a number $\lambda\in\dC\setminus [0,+\infty)$ is a regular point of $\eA$ if and only if
\[{w(z,\lambda)=z-\pi\ln(\lambda i)-\pi^2 i(H_0^{(1)}(\sqrt{\lambda}|y_1-y_2|)-H_0^{(1)}(e^{3\pi i/4}|y_1-y_2|))\neq 0}\]
and,
\begin{align*}
(\eA-\lambda I)^{-1} h(x)&=\frac{i}{4}\intRtwo H_0^{(1)}(\sqrt{\lambda}|x-y|)f(y) dy\\
&+\frac{\pi^2}{8w(z,\lambda)}\times\left(H_0^{(1)}(\sqrt{\lambda}|x-y_1|)-H_0^{(1)}(\sqrt{\lambda}|x-y_2|)\right)\\
&\times\intRtwo \left(H_0^{(2)}(\sqrt{\lambda}|x-y_1|)-H_0^{(2)}(\sqrt{\lambda}|x-y_2|)\right)h(x)dx.
\end{align*}

\item a number $\lambda\in\dC\setminus [0,+\infty)$ is an eigenvalue of $\eA$ if and only if ${w(z, \lambda) = 0}$ and,
\[
\ker(\eA-\lambda I)=\left(H_0^{(1)}(\sqrt{\lambda}|x-y_1|)-H_0^{(1)}(\sqrt{\lambda}|x-y_2|)\right)\eta,\quad \eta\in\dC.
\]
\end{enumerate}
\end{corollary}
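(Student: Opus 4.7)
\bigskip
\noindent\textbf{Proof proposal.} The plan is to view this corollary as the specialization of the preceding theorem (which parametrizes \emph{all} $m$-sectorial extensions of $A$ via pairs $\angles{z,g(x)}$) to the quasi-selfadjoint case, using Proposition~3.3 to fix $X=X_0$ and thereby collapse the function-valued parameter $g(x)$ to the single distinguished function $g_0(x)$. Concretely, by the quasi-selfadjoint criterion, a pair $\angles{\bm{Z},X}$ from Theorem~\ref{thm:bijective} yields a quasi-selfadjoint $m$-accretive extension $\eA$ iff $\dom(\bm{Z})\subseteq\cD_0$ and $X=X_0\uphar\dom(\bm{Z})$. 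Since $\cD_0=\dC\vec{c}_0$ is one-dimensional, writing $\bm{Z}\vec{c}_0=z\vec{c}_0$ (modulo the multivalued part supported on $\cD_0^\perp$) identifies quasi-selfadjoint extensions with the Friedrichs case $\dom(\bm{Z})=\{0\}$ (to be excluded) and the family indexed by a single complex parameter $z\in\dC$. This identifies the parameter $z$ with the one appearing in the parametrization of the previous theorem, specialized to $g(x)=g_0(x)$.

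Next I would translate the three conditions of Theorem~\ref{thm:sect} into conditions on $z$. The accretivity constraint $\|X_0\vec{c}_0\|^2\le\RE(\bm{Z}\vec{c}_0,\vec{c}_0)_\cH$ becomes $\|g_0\|^2_{L_2}\le 2\RE z$; since $\|g_0\|^2=\RE\omega_0=4\pi(\ln(r/2)+\gamma+\kker(r))$ was already computed, this yields the lower bound on $\RE z$. The strict inequality in condition~(3) of Theorem~\ref{thm:sect} (equivalent to the simplified form~\eqref{subord}) with $g=g_0$ gives the $m$-sectoriality criterion $\RE z>2\pi(\ln(r/2)+\gamma+\kker(r))$. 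For the nonnegative selfadjoint case, I would invoke the Corollary following Remark~\ref{closure}: the extension is nonnegative selfadjoint iff the form $\tilde\omega[e,g]=(\bm{Z}e,g)_\cH-\Omega_0[e,g]$ is nonnegative, equivalently, symmetric (Hermitian) and nonnegative on $\cD_0$. In one dimension $\tilde\omega[\zeta\vec{c}_0,\eta\vec{c}_0]=(2z-\omega_0)\zeta\bar\eta$, so symmetry forces $\IM(2z-\omega_0)=0$, and substituting the explicit value $\IM\omega_0=\pi(-3\pi+4\kkei(r))$ (obtained by separating the real and imaginary parts of $\omega_0$ via the Kelvin function identity $\pi i/2\cdot H_0^{(1)}(e^{3\pi i/4}r)=\kker(r)+i\kkei(r)$) produces the stated imaginary-part condition on $z$.

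For the explicit formulas, the crucial simplification is the identity
\[
q(\lambda)-2\Phi(\lambda)X_0=\gamma(\lambda),
\]
established in~\eqref{qg}. This instantly eliminates the $-2\Phi(-i)g(x)$ term in the domain description of the preceding theorem once we set $g=g_0$, and reduces the action of $\eA$ to $A^*u=A_F(u-\gamma(-i)\Gamma u)+(-i)\gamma(-i)\Gamma u$ rewritten in coordinate form. Inserting $\gamma(-i)\vec{c}_0=\frac{\pi i}{2}(H_0^{(1)}(e^{3\pi i/4}|x-y_1|)-H_0^{(1)}(e^{3\pi i/4}|x-y_2|))$ produces~\eqref{eq:spdomain} and~\eqref{eq:spaction}. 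For the resolvent, I would apply Theorem~\ref{thm:properbijective} directly: with $\bm{Z}$ scalar-valued on $\cD_0$, the quantity $\bm{Z}-\frac{\lambda+i}{\lambda-i}\cQ(\lambda)$ is computed on $\vec{c}_0$ using the explicit matrix formula for $\cQ(\lambda)$ derived earlier, and a short calculation (collecting the $(\lambda+i)/(\lambda-i)$ factor with the $\log$ and $H_0^{(1)}$ entries) yields the scalar $w(z,\lambda)$ displayed in the corollary. The rank-one character of $\gamma(\lambda)\bm{W}^{-1}(\lambda)\gamma^*(\bar\lambda)$ then gives the stated integral form of the resolvent, with the kernel eigenfunction at poles being $\gamma(\lambda)\vec{c}_0$.

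The main obstacle I anticipate is bookkeeping rather than conceptual: ensuring that the branch conventions $\sqrt{i}=e^{i\pi/4}$, $\sqrt{-i}=e^{3\pi i/4}$ (required by $\IM\sqrt{\lambda}>0$) are used consistently when passing between $q(\lambda)$, $\gamma(\lambda)$, $\cQ(\lambda)$, and the final Hankel-function expressions, and that the $\ln(\lambda i)$ branch in $\cQ(\lambda)$ combines correctly with $H(\lambda,r)$ to produce the single scalar $w(z,\lambda)$. Once those algebraic identifications are performed, the statement follows by direct substitution, so no new analytic input beyond Proposition~3.3, Theorems~\ref{thm:bijective},~\ref{thm:sect}, and~\ref{thm:properbijective} is needed.
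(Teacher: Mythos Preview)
Your proposal is correct and follows exactly the route the paper intends: the corollary is stated without proof precisely because it is the specialization of the preceding theorem to $g=g_0$ via the quasi-selfadjoint criterion (Proposition~4.1) together with Theorem~\ref{thm:properbijective}, and the identity $q(\lambda)-2\Phi(\lambda)X_0=\gamma(\lambda)$ from~\eqref{qg} is indeed the device that collapses the domain, action, and resolvent formulas to the displayed Hankel-function expressions. Your flagged bookkeeping concern is apt---in particular, when checking the self-adjointness condition $\IM(2z-\omega_0)=0$ against the stated formula for $\IM z$, keep track of the factor $\|\vec c_0\|^2=2$ carefully.
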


\begin{remark}
One can obtain a description of the Kre\u\i n-von Neumann extension
$A_N$ of $A$ from relations~\eqref{eq:spdomain},~\eqref{eq:spaction}
by substituting
\[
2z=\omega_0=4\pi\left(\ln\frac{|y_1-y_2|}{2}-\frac{3\pi i}{4}+\gamma+\frac{\pi i}{2}H_0^{(1)}(e^{3\pi i/4}|y_1-y_2|)\right).
\]
It follows from~\eqref{expsnuvsym} that form $A_N[u,v]$ associated with the
Kre\u\i n-von Neumann extension $A_N$ takes the form
\[
\domf[A_N]=\left\{\begin{aligned}
&u(x)=h(x)+\frac{\pi i}{2}\left(H_0^{(1)}(e^{\pi i/4}|x-y_1|)-H_0^{(1)}(e^{\pi i/4}|x-y_2|)\right)\omega,\\
&h(x)\in W_2^1(\Rtwo),\quad\omega\in\dC\end{aligned}\right\},
\]
and if
\begin{align*}
u(x)&=h_1(x)+\frac{\pi i}{2}\left(H_0^{(1)}(e^{\pi i/4}|x-y_1|)-H_0^{(1)}(e^{\pi i/4}|x-y_2|)\right)\omega_1,\\
v(x)&=h_2(x)+\frac{\pi i}{2}\left(H_0^{(1)}(e^{\pi i/4}|x-y_1|)-H_0^{(1)}(e^{\pi i/4}|x-y_2|)\right)\omega_2,
\end{align*}
where $h_1(x),h_2(x)\in W_2^1(\Rtwo),\quad\omega_1,\omega_2\in\dC$,
then
\begin{multline*}
A_N[u,v]=\intRtwo \nabla h_1(x)\overline{\nabla h_2(x)}dx\\
- \frac{\pi\bar\omega_2}{2}\intRtwo h_1(x)\overline{\left(H_0^{(1)}(e^{\pi i/4}|x-y_1|)-H_0^{(1)}(e^{\pi i/4}|x-y_2|)\right)}dx\\
-\frac{\pi\omega_1}{2}\intRtwo \left(H_0^{(1)}(e^{\pi i/4}|x-y_1|)-H_0^{(1)}(e^{\pi i/4}|x-y_2|)\right)\overline{h_2(x)}dx\\
+4\pi\left(\ln\frac{|y_1-y_2|}{2} +\gamma+\kker |y_1-y_2|\right)\cdot\omega_1\bar\omega_2.
\end{multline*}
\end{remark}

\providecommand{\bysame}{\leavevmode\hbox to3em{\hrulefill}\thinspace}
\providecommand{\MR}{\relax\ifhmode\unskip\space\fi MR }
\providecommand{\MRhref}[2]{%
 \href{http://www.ams.org/mathscinet-getitem?mr=#1}{#2}
}
\providecommand{\href}[2]{#2}


\begin{thebibliography}{10}


\bibitem{Adamyan}
V.~Adamyan, \textit{Nonnegative Perturbations of Nonnegative
Self-adjoint Operators},
 Methods Funct.Anal.Topology. \textbf{13} (2007), No.2, 103-109.

\bibitem{Albev}
S.~Albeverio, F.~Gestezy, R.~Hoegh-Krohn, and H.~Holden, \emph{Solvable models
 in quantum mechanics}, Springer, New York, 1988.

\bibitem{AndoNishio}
T.~Ando and K.~Nishio, \emph{Positive selfadjoint extensions of positive
 symmetric operators}, Tohoku Math. J. \textbf{22} (1970), 65--75.

\bibitem{Arens}
R.~Arens, \textit{Operational calculus of linear relations.} Pacific J.Math., \textbf{11} (1961), 9--23.

\bibitem{Arl1988}
Yu. M.~Arlinski\u{\i}, \textit{Positive spaces of boundary values
and sectorial extensions of nonnegative symmetric operators},
Ukrain. Math. Zh, \textbf{40} (1988), no.1, 8-14 (Russian). English
translation in Ukr. Math. J. \textbf{40} (1988), No.1, 5-10.

\bibitem{Ar1}
Yu.M.~Arlinski\u{\i}, \textit{On proper accretive extensions of
positive linear relations}, Ukr.Math.Journ. \textbf{47} (1995), No.6,
723--730.

\bibitem{ArlMaxSectExt}
{\relax Yu}.~M. Arlinski\u\i, \emph{Maximal sectorial extensions and closed
 forms associated with them}, Ukrainian Math. J. \textbf{48} (1996), 809--827.

\bibitem{ExtExtRel}
{\relax Yu}.~M. Arlinski\u{\i}, \emph{Extremal extensions of sectorial linear
 relations}, Matematichnii Studii \textbf{7} (1997), 81--96.

\bibitem{MAccExtRest}
{\relax Yu}.~M. Arlinski\u{\i}, \emph{On $m$-accretive extensions
and restrictions}, Methods of Funct.
 Anal. Topol. \textbf{4} (1998), 1--26.

\bibitem{Arl12}
{\relax Yu}.~M. Arlinski\u{\i}, \emph{On functions connected with
sectorial operators and their
 extensions}, Int. Equat. Oper. Theory \textbf{33} (1999), 125--152.


\bibitem{ArlAbsBounCond}
Yu.\.M. Arlinski\u{\i}, \emph{Abstract boundary conditions for maximal
 sectorial extensions of sectorial operators}, Math. Nachr. \textbf{209}
 (2000), 5--36.

 \bibitem{Arl2006}
Yu.~Arlinski\u{\i}, Extremal extensions of a
$C(\alpha)$-suboperator and their representations, {\em Oper. Theory
Adv. Appl.}, \textbf{162} (2006), 47--69.



\bibitem {LMS2012}
Yu.~Arlinski\u\i, \textit{Boundary triplets and maximal accretive
extensions of sectorial operators}, in ``Operator Methods for
Boundary Value Problems'', London Mathematical Society Lecture Note
Series, No. 404, 2012, 33--70. Cambridge University Press.

\bibitem{CAOT2012}
Yu.~Arlinski\u\i, Yu. Kovalev, and E. Tsekanovski\u\i,
\textit{Accretive and sectorial extensions of nonnegative symmetric
operators}, CAOT, \textbf{6} (2012), No.3, 677--718.


\bibitem{ArlPopMAccExt}
{\relax Yu}.~M. Arlinski\u{\i} and A.B. Popov, \emph{On
$m$-accretive extensions
 of sectorial operator}, Math. Sb. \textbf{204} (2013), No.8, 3--40 (Russian). English translation in Sbornik: Mathematics, \textbf{204:8} (2013), 1085--1121.
\bibitem{ArTs2}
 Yu.M.~Arlinski\u{\i} and E.R.~Tsekanovski\u{\i},
 \textit{ Quasi-selfadjoint contractive extensions of
 Hermitian contractions},
 Teor. Funkts., Funkts. Anal. Prilozhen, \textbf{50} (1988), 9--16 (Russian). English
translation in J. Math. Sci. \textbf{49} (1990), No.6, 1241--1247.

\bibitem{ArTs6}
Yu.~Arlinski\u{\i} and E.~Tsekanovski\u{\i}, \textit{The von Neumann
problem for nonnegative symmetric operators}, Int. Eq. and Oper.
Theory, \textbf{51} (2005), 319--356.

\bibitem{ArTs2009}
Yu.~Arlinski\u{\i} and E.~Tsekanovski\u{i},\textit{ Krein's research on
semi-bounded operators, its contemporary developments, and
applications}. Operator Theory: Advances and Applications, \textbf{190}
(2009), 65--112.

\bibitem{AGMShT}
Ashbaugh,~M., Gesztesy,~F., Mitrea,~M., Shterenberg,~R., Teschl,~G.:
A Survey on the Kre\u\i n-von Neumann Extension, the Corresponding
Abstract Buckling Problem, and Weyl-type Spectral Asymptotics for
Perturbed Kre\u\i n Laplacians in Nonsmooth Domains, Operator
Theory: Advances and Applications, \textbf{232} (2013), 1--–106.



\bibitem{CS}
E.~A. Coddington and H.~S.~V. de~Snoo, \emph{Positive selfadjoint extensions of
 positive symmetric subspaces}, Math. Z. \textbf{159} (1978), 203–214.

\bibitem{DM1}
V.A.~Derkach and M.M.~Malamud, \textit{Generalized resolvents and
the boundary value problems for Hermitian operators with gaps}, J.
Funct. Anal., \textbf{95} (1991), No.1, 1-95.

\bibitem{DM2}
V.A.Derkach and M.M.Malamud, \textit{The extension theory of
Hermitian operators and the moment problem}, J. of Math. Sci., \textbf{73} (1995),
No.2, 141-242.


\bibitem{DMTs2}
V.A.~Derkach, M.M.~Malamud, and E.R.~Tsekanovski\u{\i},
\textit{Sectorial extensions of positive operators and
characteristic functions}, Ukrainian Math.Journ. \textbf{41} (1989), no.2,
151--158 (Russian). English tranlation in Ukr. Math. J. \textbf{41} (1989), No.2,
136-142.

\bibitem{GKMT}
F.~Gesztesy, N.~Kalton, K.~Makarov, E.~Tsekanovski\u{\i},
\textit{Some aplications of operator-valued Herglotz functions},
Oper.Theory, Adv. and Appl., \textbf{123} (2001), 271--321.

\bibitem{GG1}
M.L.Gorbachuk and V.I.Gorbachuk, \textit{ Boundary value problems
for differential- operator equations}, Naukova Dumka, Kiev, 1984
(Russian).English translation: Kluwer Academic Publishers, 1991.

\bibitem{GradRyzh}A.~Jeffrey and D.~Zwillinger, \emph{Table of integrals, series, and products}, Table of Integrals, Series, and Products Series, Elsevier Science, 2007.

\bibitem{Kato1961}
T.~Kato, \emph{Fractional powers of dissipative operators}, Journ. Math. Soc.
 Japan \textbf{13} (1961), 248--274.

\bibitem{Kato}
T.~Kato, \emph{Perturbation theory for linear operators}, Springer-Verlag,
 Berlin--Heidelberg--New York, 1966.

\bibitem{Koch}
A.N.~Kochubei, \textit{ Extensions of a positive definite symmetric
operator}. Dokl. Akad. Nauk Ukrain. SSR, Ser. A, no. 3, 1979,
168-171 (Russian)


\bibitem{Kr1}
M.~G. Kre\u{\i}n, \emph{The theory of selfadjoint extensions of semibounded
 hermitian transformations and its applications. {I}}, Mat.Sbornik \textbf{20}
 (1947), 431--495, (Russian).

\bibitem{Kr2}
M.~G. Kre\u{\i}n, \emph{The theory of selfadjoint extensions of semibounded
 hermitian transformations and its applications. {II}}, Mat.Sbornik
 \textbf{21} (1947), 365--404, (Russian).
\bibitem{KL}
M.G.Kre\u{\i}n and H.Langer, \textit{\"{U}ber die Q - function eines
$\Pi$ - Hermiteschen operators im raum $\Pi_\kappa$}, Acta Sci.
Math. Szeged \textbf{34} (1973), 191-230.




\bibitem{KL2}
M.G.Kre\u{\i}n and H.Langer, \textit{On defect subspaces and
generalized resolvents of Hermitian operator in the space
$\Pi_\kappa$}, Fuctional Analysis and Appl. \textbf{5}, No.3 (1971), No.3, 54-69
(Russian).

\bibitem{MM1}
M.M.Malamud, \textit{On some classes of Hermitian operators with
gaps}, Ukrainian Mat.Zhu., \textbf{44} (1992), No.2, 215--234
(Russian). English translation: \textit{Ukr. Math. J.},
\textbf{44} (1992), 1522--1547.


\bibitem{Mikh}
V.A.~Mikhailets, \emph{Solvable and sectorial boundary value
problems for the operator Sturm-Liouville equation}. Ukrainian Math
Zh., \textbf{26} (1974), 450-459 (Russian).

\bibitem{olver2010nist}
F.W.J. Olver, National~Institute of~Standards, and Technology (U.S.),
 \emph{Nist handbook of mathematical functions}, Cambridge University Press,
 2010.

\bibitem{Phillips}
R.~Phillips, \emph{Dissipative operators and hyperbolic system of partial
 differential equations}, Trans. Amer. Math. Soc. \textbf{90} (1959),
 192--254.

\bibitem{Ph3}
R.S.~Phillips, \emph{On dissipative operators}, Lectures in differential
 equations \textbf{3} (1969), 65--113.

 \bibitem{RB}
F.S.~Rofe-Beketov, \emph{Numerical range of a linear relation and
maximal relations}, Theory of Functions, Functional Anal. and Appl.,
{\bf 44}, (1985), 103--112 (Russian). English translation in J. Soviet Math., \textbf{48} (1990), No.3, 329--336.


\bibitem{sneddon1995fourier}
I.N. Sneddon, \emph{Fourier transforms}, Dover Books on Mathematics, Dover
 Publications, 1995.
\bibitem{Ts1980}
E.R.~Tsekanovski\u\i, \emph{Non-selfadjoint accretive extenions of positive operators and theorems of Friedrichs--Kre\u\i n--Phillips}, Functional Anal. i Prilozhen. {\bf 14} (1980), No.2, 87--88 (Russian). English translation in Funct. Anal. Appl. {\bf 14:2} (1980), 156--157.




\end{thebibliography}
\end{document}